\documentclass[11pt]{article}

%---{{{
% \newcommand{\achtung}[1]{{\bf \color{red} #1 }}
\newcommand{\insieme}[1]{\left\{ #1 \right\}}
\usepackage{fullpage}
\usepackage{subcaption}
\usepackage{mathrsfs}  
\usepackage{bm}

%%%%%%%%%%%%%%%%%%%%%%%%%%%%%%%%%%%%%%%%%%%%%%%
%%%%%%%%%%%%%%%%%%%%%%%%%%%%%%%%%%%%%%%%%%%%%%%
%%%%%%%%%%%%%%%%%%%%%%%%%%%%%%%%%%%%%%%%%%%%%%%
%%%%%%%%%%%%%%%%%%%%%%%%%%%%%%%%%%%%%%%%%%%%%%%
%%%%%%%%%%%%%%%%%%%%%%%%%%%%%%%%%%%%%%%%%%%%%%%
%%%%%%%%%%%%%%%%%%%%%%%%%%%%%%%%%%%%%%%%%%%%%%%
%%%%%%%%%%%%%%%%%%%%%%%%%%%%%%%%%%%%%%%%%%%%%%%
%%%%%%%%%%%%%%%%%%%%%%%%%%%%%%%%%%%%%%%%%%%%%%%
\usepackage{tikz}
\usetikzlibrary{patterns}
\pgfdeclarepatternformonly{my crosshatch dots}{\pgfqpoint{-1pt}{-1pt}}{\pgfqpoint{6pt}{6pt}}{\pgfqpoint{8pt}{8pt}}%
{
    \pgfpathcircle{\pgfqpoint{0pt}{0pt}}{.5pt}
    \pgfpathcircle{\pgfqpoint{4pt}{4pt}}{.5pt}
    \pgfusepath{fill}
}

% \pgfdeclarepatternformonly{nel}{\pgfqpoint{-1pt}{-1pt}}{\pgfqpoint{1.5pt}{1.5pt}}{\pgfqpoint{4pt}{4pt}}%
% {
%   \pgfsetlinewidth{0.3pt}
%   \pgfpathmoveto{\pgfqpoint{0pt}{0pt}}
%   \pgfpathlineto{\pgfqpoint{4pt}{4pt}}
%   \pgfusepath{stroke}
% }

\pgfdeclarepatternformonly{nel}{\pgfqpoint{-1pt}{-1pt}}{\pgfqpoint{4pt}{4pt}}{\pgfqpoint{4pt}{4pt}}%
{
  \pgfsetlinewidth{0.3pt}
  \pgfpathmoveto{\pgfqpoint{0pt}{0pt}}
  \pgfpathlineto{\pgfqpoint{4pt}{4pt}}
  \pgfusepath{stroke}
}

%%%%%%%%%%%%%%%%%%%%%%%%%%%%%%%%%%%%%%%%%%%%%%%
%%%%%%%%%%%%%%%%%%%%%%%%%%%%%%%%%%%%%%%%%%%%%%%
%%%%%%%%%%%%%%%%%%%%%%%%%%%%%%%%%%%%%%%%%%%%%%%
%%%%%%%%%%%%%%%%%%%%%%%%%%%%%%%%%%%%%%%%%%%%%%%
%%%%%%%%%%%%%%%%%%%%%%%%%%%%%%%%%%%%%%%%%%%%%%%
%%%%%%%%%%%%%%%%%%%%%%%%%%%%%%%%%%%%%%%%%%%%%%%
%%%%%%%%%%%%%%%%%%%%%%%%%%%%%%%%%%%%%%%%%%%%%%%
%%%%%%%%%%%%%%%%%%%%%%%%%%%%%%%%%%%%%%%%%%%%%%%
\usepackage{enumerate}
\usepackage{amsmath,amsfonts,amssymb,amsthm, bbold}
\usepackage{svg}
\usepackage{graphics,esint}
\usepackage{epsfig}
\usepackage{xcolor}
\usepackage{xspace}
\definecolor{pingreen}{rgb}{0,39,14}

\setlength{\parindent}{0cm}
\usepackage{xfrac}
\usepackage{comment}
\usepackage{hyperref}
\usepackage{cleveref}

\crefname{section}{§}{§§}
\Crefname{section}{§}{§§}

\def\Acal{\mathcal{A}}

\def\rhs{r.h.s.\xspace}

\def\st{s.t.\xspace}

 % Misura di Hausdorff

\makeatletter
\newtheorem*{rep@theorem}{\rep@title}
\newcommand{\newreptheorem}[2]{%
\newenvironment{rep#1}[1]{%
 \def\rep@title{#2~\ref{##1}}%
 \begin{rep@theorem}}%
 {\end{rep@theorem}}}
\makeatother

\newtheorem{theorem}{Theorem}[section]

\newtheorem{lemma}[theorem]{Lemma}
\newtheorem{definition}[theorem]{Definition}

\newtheorem{remark}[theorem]{Remark}

\newtheorem{proposition}[theorem]{Proposition}

\newtheorem{corollary}[theorem]{Corollary}

\newreptheorem{theorem}{Theorem}

\newcommand{\R}{\mathbb{R}}

\newcommand{\Fcal}{\mathcal{F}}

\newcommand{\FcalJ}{\tilde{\mathcal{F}}_{J,L,\eps}}
\newcommand{\Fcalt}{\mathcal{F}_{\tau,L,\eps}}

\def\T{\mathbb{T}}

\def\N{\mathbb N}
\def\eps{\varepsilon}

\def\per{\mathrm{Per}}

\def\eps{\varepsilon}

\def\d {\,\mathrm {d}}
\def\dx{\,\mathrm {d}x}
\def\dz{\,\mathrm {d}z}
\def\ds{\,\mathrm {d}s}

\def\dt{\,\mathrm {d}t}
\def\dy{\,\mathrm {d}y}
\def\da{\,\mathrm {d}a}
\def\db{\,\mathrm {d}b}

\def\loc{\mathrm{loc}}
\def\at{\alpha_{\eps,\tau}}
\def\Mi#1#2#3{\overline{\mathcal M}^{#3}_{\at}(u,x_{#3}^\perp,[#1,#2))}

\def\Gcal#1{\overline{\mathcal{G}}^{#1}_{\at,\tau}(u,x_{#1}^\perp,[0,L))}

\numberwithin{equation}{section}

\usepackage{authblk}

\author[1]{Sara Daneri\thanks{sara.daneri@gssi.it}}
\author[2]{Eris Runa\thanks{eris.runa@gmail.com}}
\affil[1]{Gran Sasso Science Institute, L'Aquila, Italy}
\affil[2]{Deutsche Bank, London, UK}

\title{One-dimensionality of the minimizers in the large volume  limit for a diffuse interface attractive/repulsive model in general dimension}

\date{}
%---}}}

\begin{document}
\maketitle

\begin{abstract}
   In this paper we consider the diffuse interface generalized antiferromagnetic model with local/nonlocal attractive/repulsive terms in competition studied in \cite{DKR}. The parameters of the model are denoted by $\tau$ and $\eps$: the parameter $\tau$ represents the relative strength of the local term with respect to the nonlocal one, while the parameter $\varepsilon$ describes the transition scale in the Modica-Mortola type term. 
   Restricting to a periodic box of size $L$, with $L$ multiple of the period of the minimal one-dimensional minimizers, in \cite{DKR} the authors prove that in any dimension $d\geq1$ and for small but positive $\tau$ and  $\varepsilon$ (eventually depending on $L$), the minimizers are non-constant one-dimensional periodic functions. 
   In this paper we prove that periodicity and one-dimensionality of minimizers occurs also in the zero temperature analogue of the  thermodynamic limit, namely as $L\to+\infty$. 
\end{abstract}

\section{Introduction}

In this paper we consider the following mean field free energy functional.
For $L,J,\eps>0$, $d\geq1$, $p\geq{d+2}$, $u\in W^{1,2}_{\loc}(\R^d;[0,1])$ and $[0,L)^d$-periodic, define

\begin{equation}\label{E:F}
\FcalJ(u):=\frac{J}{L^d}\Bigl[3\eps\int_{[0,L)^d}\|\nabla u(x)\|_1^2\dx+\frac{3}{\eps}\int_{[0,L)^d}W(u(x))\dx\Bigr]-\frac{1}{L^d}\int_{\R^d}\int_{[0,L)^d}|u(x+\zeta)-u(x)|^2K(\zeta)\dx\d\zeta,
\end{equation}
where, for $y=(y_1,\dots,y_d)\in\R^d$, $\|y\|_1=\sum_{i=1}^d|y_i|$, $W(t)=t^2(1-t)^2$ and $K(\zeta)=\frac{1}{(\|\zeta\|_1+1)^p}$.

In order to state our results properly, it is convenient to rescale the functional in order to have that the width of the optimal period for one-dimensional functions and their energy are of order $O(1)$.

For $\beta=p-d-1$ and $\tau>0$, setting 
\begin{align*}
&J_c=\int |\zeta_1|K(\zeta)\d\zeta,\quad J=J_c-\tau, \quad  x=\tau^{-1/\beta}\tilde x,\quad   \zeta=\tau^{-1/\beta}\tilde \zeta,\quad L=\tau^{-1/\beta}\tilde L,\\
& \tilde u(\tilde x)=u(x),\quad\FcalJ( u)=\tau^{1+1/\beta}\mathcal{F}_{\tau,\tilde L,\eps}(\tilde u)
\end{align*}
and finally dropping the tildas, one has that the rescaled functional has the form
\begin{equation}
\label{def:fcalt}
\Fcalt(u)=\frac{1}{L^d}\Bigl[\mathcal M_{\alpha_{\eps,\tau}}(u,[0,L)^d)\Bigl(\int_{\R^d}K_\tau(\zeta)|\zeta_1|\d\zeta-1\Bigr)-\int_{\R^d}\int_{[0,L)^d}|u(x)-u(x+\zeta)|^2K_\tau(\zeta)\dx\d\zeta\Bigr],
\end{equation}
where for $\alpha>0$
\begin{equation}
\label{eq:malpha}
\mathcal M_{\alpha}(u,[0,L)^d)=3\alpha\int_{[0,L)^d}\|\nabla u(x)\|_1^2\dx+\frac{3}{\alpha}\int_{[0,L)^d}W(u(x))\dx,
\end{equation}
$\alpha_{\eps,\tau}=\eps\tau^{1/\beta}$ and 
\begin{equation}
\label{def:Ktau}
K_\tau(\zeta)=\frac{1}{(\|\zeta\|_1+\tau^{1/\beta})^p}.
\end{equation}

 For fixed $\tau > 0$ and $\eps>0$, consider first for all $L > 0$ the minimal value obtained by $\Fcal_{\tau,L,\eps}$ on $[0,L)^d$-periodic one-dimensional functions (denoted by $\mathcal U^{per}_L$)
and then the minimal among these values as $L$ varies in $(0,+\infty)$. We will denote this value by $C^*_{\tau,\eps}$, namely

\begin{equation}\label{eq:cstartau}
	\begin{split}
		C^*_{\tau,\eps} := \inf_{L>0} \  \inf_{u\in \mathcal U^{per}_L} \Fcal_{\tau,L,\eps}(u). 
	\end{split}
\end{equation}

By the reflection positivity technique, in \cite{GLL1D} it is shown that such value is attained  by periodic one-dimensional functions with possibly infinite and not unique periods.

In \cite{DKR} we prove that,  for $\tau$ and $\eps$ sufficiently small, there exist periodic functions of finite period $2h$ for which the energy value $C^*_{\tau,\eps}$ is attained and the following property holds 

\begin{equation}\label{eq:refl}
	g(\nu+(2k+1)h+t)=1-g(\nu+(2k+1)h-t)\quad\text{ for all }k\in\N\cup\{0\},\,\, t\in[0,h].
\end{equation}

   We denote any of such finite optimal periods $2h$ (which may not be unique) as $2h^*_{\tau,\eps}$.

The main result obtained in \cite{DKR} is the following

\begin{theorem}[\cite{DKR}, Theorem 1.1]
	\label{Thm:DKR}
	Let $L=2kh^*_{\tau,\eps}$, $k\in\N$. Then there exist ${\tau}_L>0$, $\eps_L>0$ such that, for any $0<\tau\leq{\tau}_L$ and $0<\eps\leq \eps_L$ the minimizers of~\eqref{def:fcalt} are one-dimensional periodic functions of period $2h^*_{\tau,\eps}$.
\end{theorem}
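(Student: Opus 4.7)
The plan is to exploit the $\ell^1$ structure built into both the gradient penalty $\|\nabla u\|_1^2$ and the kernel $K_\tau(\zeta)=(\|\zeta\|_1+\tau^{1/\beta})^{-p}$ to reduce the $d$-dimensional problem to a superposition of one-dimensional ones. First I would use $\|\nabla u\|_1^2\geq\sum_{i=1}^d(\partial_i u)^2$ together with the natural splitting of $K_\tau$ across coordinate directions to bound $\Fcalt(u)\geq\frac{1}{d}\sum_{i=1}^d\mathcal{F}^i_{\tau,L,\eps}(u)$, where each $\mathcal{F}^i$ involves only $\partial_i u$ and a direction-$i$ portion of the nonlocal interaction; the full symmetry of $K_\tau$ in its $d$ coordinates makes this decomposition sharp whenever $u$ is one-dimensional along some direction.

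Next, for each fixed $i$ I would apply Fubini in the perpendicular variables $x_i^\perp\in[0,L)^{d-1}$ to write
\[
\mathcal{F}^i_{\tau,L,\eps}(u)=\frac{1}{L^{d-1}}\int_{[0,L)^{d-1}}\Fcalo(u(\cdot,x_i^\perp))\,\d x_i^\perp+R^i(u),
\]
with $R^i(u)\geq0$ a remainder capturing the nonlocal coupling between distinct slices. By definition~\eqref{eq:cstartau}, each slice satisfies $\Fcalo(u(\cdot,x_i^\perp))\geq C^*_{\tau,\eps}$, so averaging in $x_i^\perp$ and then summing over $i$ yields $\Fcalt(u)\geq C^*_{\tau,\eps}$. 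Since one-dimensional periodic functions of period $2h^*_{\tau,\eps}$ achieve this bound, they are minimizers.

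To upgrade the lower bound into the claimed rigidity I would inspect the equality case: equality forces (a) for each $i$ and a.e.~$x_i^\perp$, the slice $u(\cdot,x_i^\perp)$ is an optimal 1D periodic minimizer of period $2h^*_{\tau,\eps}$ satisfying~\eqref{eq:refl}, hence determined up to a translation parameter $\nu_i(x_i^\perp)$; and (b) the cross-slice remainder $R^i(u)$ vanishes. Step (a) is provided by the 1D analysis of~\cite{GLL1D} combined with \eqref{eq:refl}. For step (b), a careful expansion of $R^i(u)$ shows that it controls, up to a controllable error, a quantity comparable to $\int_{[0,L)^{d-1}}|\nabla_{x_i^\perp}\nu_i|^2\,\d x_i^\perp$. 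Requiring this to vanish for every $i$ pins each $\nu_i$ to a constant, which is exactly one-dimensionality.

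The main obstacle will be the quantitative handling of $R^i(u)$: one needs a strict positivity that survives in the perturbative regime where $\tau,\eps$ are small but positive, so that the 1D minimum beats any non-1D competitor by a margin larger than all error terms. The integrability hypothesis $p\geq d+2$ is precisely what makes $K_\tau$ decay rapidly enough against perpendicular increments to run such a quantitative estimate, and the choice $L=2kh^*_{\tau,\eps}$ removes the boundary incompatibility between the optimal 1D period and the period of $u$ on $[0,L)^d$; together these are what allow the selection of $\tau_L$ and $\eps_L$ in the statement.
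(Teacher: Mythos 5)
Your high-level strategy — lower-bound the energy by a sum of one-dimensional slice energies plus a nonnegative cross-slice penalty, invoke one-dimensional optimality to reach $C^*_{\tau,\eps}$, then extract rigidity from the equality case — is indeed the backbone of the \cite{DKR} argument. But the specific decomposition you propose breaks down at the point where it must be sharp. Test the chain $\Fcalt(u)\geq\frac{1}{d}\sum_i\mathcal{F}^i_{\tau,L,\eps}(u)$ together with $\mathcal{F}^i(u)=\frac{1}{L^{d-1}}\int\Fcalo(u(\cdot,x_i^\perp))\dx_i^\perp+R^i(u)$, $R^i\geq0$, on the one-dimensional minimizer $u(x)=g(x_1)$ itself: then $\Fcalt(u)=C^*_{\tau,\eps}<0$, while for $i\geq2$ every slice $u(\cdot,x_i^\perp)$ is a \emph{constant} in $(0,1)$, whose one-dimensional Modica--Mortola energy is nonnegative (strictly positive wherever $g\in(0,1)$) and whose nonlocal energy vanishes, so $\mathcal{F}^i(u)\geq0$. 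Hence $\frac{1}{d}\sum_i\mathcal{F}^i(u)\geq\frac{1}{d}C^*_{\tau,\eps}>C^*_{\tau,\eps}=\Fcalt(u)$, contradicting the claimed inequality. Both the $1/d$ averaging and the lossy bound $\|\nabla u\|_1^2\geq\sum_i(\partial_iu)^2$ throw away exactly what is needed for sharpness; moreover your $R^i$ cannot simultaneously be nonnegative and vanish for the one-dimensional minimizer, since its perpendicular slices are all distinct constants.

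The decomposition \cite{DKR} actually uses (recalled here in \eqref{eq:3.13}, with ingredients \eqref{eq:3.2}--\eqref{def:Wcal}) is of a different kind: the Modica--Mortola term is sliced \emph{exactly}, with each directional piece $\Mi{0}{L}{i}$ retaining the full $\|\nabla u\|_1$ weight so that summing over $i$ reconstitutes the whole term and the pieces in directions $j\neq i$ vanish for a function one-dimensional in $e_i$; the nonlocal term is re-expressed so that each direction sees the one-dimensional slice energy with the perpendicularly integrated kernel $\widehat K_\tau$, and the cost of this is the nonnegative cross-derivative penalty $\mathcal I^i_{\tau,L}$ in \eqref{eq:itl}, which measures the failure of $u(x+\zeta)-u(x)$ to split into an $e_i$-increment plus a perpendicular one. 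That term, not your $R^i$, is the quantitative engine of rigidity. Finally, the equality-case analysis via a translation field $\nu_i(x_i^\perp)$ is a heuristic that does not correspond to what \cite{DKR} do: in the diffuse-interface setting, slices of near-minimizers are not exact translates of a fixed profile (small-amplitude oscillations and slow transitions are admissible), so controlling $\int|\nabla_{x_i^\perp}\nu_i|^2$ is neither well-posed nor proved; the actual argument uses local rigidity and stability estimates on the decomposed pieces combined with one-dimensional optimization on slices. Your remarks about $p\geq d+2$ and about $L=2kh^*_{\tau,\eps}$ removing period mismatch are correct, but the decomposition and the rigidity mechanism need to be replaced by the actual constructions before the proof can close.
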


In this paper, we prove that one-dimensionality and periodicity of minimizers of \eqref{def:fcalt} holds also in the zero temperature analogue of the thermodynamic limit, namely that the range of parameters  $\tau$, $\eps$ in which one-dimensionality and periodicity of minimizers is observed can be fixed independently on $L$. More precisely, the following holds.

\begin{theorem}\label{thm:main}
	Let $d\geq1$, $p\geq d+2$ and $h^{*}_{\tau,\eps}$ be the optimal period for fixed $\tau$. 
	Then there exists $\bar \tau$ and $\bar\eps$, such that for every $\tau< \bar \tau$ and $\eps<\bar \eps$, one has that for every $k\in \N$ and  $L = 2k h_{\tau,\eps}^{*}$,   the minimizers of $\Fcalt$ are optimal one-dimensional functions  of width $h_{\tau,\eps}^{*}$. 
\end{theorem}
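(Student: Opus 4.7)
The overall strategy is to localise the proof of Theorem~\ref{Thm:DKR} so that the smallness requirements on $\tau$ and $\eps$ are dictated by a fixed mesoscopic scale rather than by the macroscopic side length $L$. First I would fix a minimizer $u$ of $\Fcalt$ on $[0,L)^d$ and note that, since $L=2kh^*_{\tau,\eps}$, the one-dimensional optimal profile is admissible and gives the tight uniform-in-$L$ bound $\Fcalt(u)\le C^*_{\tau,\eps}$.

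Next I would cover $[0,L)^d$ by a grid of disjoint sub-cubes $\{Q_j\}$ of side length $\ell=2m h^*_{\tau,\eps}$, with $m\in\N$ large but chosen independently of $L$. The Modica--Mortola part of $\Fcalt$ is additive over the $Q_j$, while the nonlocal part splits into self-contributions on each $Q_j$ plus cross-contributions among different sub-cubes. The hypothesis $p\ge d+2$ yields
\begin{equation*}
\int_{\|\zeta\|_1>\ell}K_\tau(\zeta)|\zeta_1|\d\zeta\le C\ell^{-1}
\end{equation*}
uniformly in $\tau\le\bar\tau$, so the long-range part of the cross-contributions is an error $\eta(\ell)\to 0$ as $\ell\to\infty$, independent of $L$. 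Averaging over $j$ then forces the local excess energy to be small on a subcollection of $Q_j$ of asymptotically full density; on such ``good'' sub-cubes, a quantitative stability version of Theorem~\ref{Thm:DKR} at scale $\ell$ (with fixed, $L$-independent thresholds) should force $u|_{Q_j}$ to be $L^1$-close to a one-dimensional minimizer of period $2h^*_{\tau,\eps}$.

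It then remains to rigidify this local approximate one-dimensionality into a global exact one. The anisotropy of the $\|\cdot\|_1$-gradient term and of $K_\tau$ restricts admissible stripe directions to the $d$ coordinate axes, and two adjacent good sub-cubes carrying stripes along different axes are incompatible at their common face: the mismatch costs an $O(\ell^{d-1})$ Modica--Mortola penalty which cannot be offset by the global energy budget $O(\eta(\ell)\ell^d)$ once $\ell$ is chosen large. Hence all good sub-cubes share a single coordinate direction; the residual bad sub-cubes, whose total measure is itself controlled by $\eta(\ell)$, are then ruled out by the tight bound $\Fcalt(u)\le C^*_{\tau,\eps}$ combined with the strict positivity of the local excess on any such cube.

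The step I expect to be the main obstacle is the quantitative stability statement: namely, that if $\Fcal_{\tau,\ell,\eps}(v)\le C^*_{\tau,\eps}+\delta$ on a cube of side $\ell$, then $v$ must be $L^1$-close to an exact one-dimensional minimizer of period $2h^*_{\tau,\eps}$, with a modulus in $\delta$ that is uniform in $L$. This requires revisiting the proof of Theorem~\ref{Thm:DKR} and extracting a quantitative, rather than merely qualitative, rigidity estimate, and tracking the long-range cross terms carefully enough that their contribution does not consume the rigidity margin produced by the Modica--Mortola mismatch. Once these two ingredients are in place, the thresholds $\bar\tau$ and $\bar\eps$ depend only on the fixed mesoscopic scale $\ell$ and are therefore uniform in $L$.
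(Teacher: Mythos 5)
Your high-level plan matches the paper's: fix a mesoscopic scale $l$ independent of $L$, localise the energy, establish a local rigidity statement at scale $l$, and then glue local one-dimensionality into global one-dimensionality. However, the concrete route you propose differs substantially, and the two steps you underestimate are precisely the ones where the paper invests its main technical effort.

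First, the localisation. You decompose $[0,L)^d$ into a disjoint grid of sub-cubes and split the nonlocal term into self- and cross-contributions, bounding the latter by a tail estimate. This runs into two difficulties the paper is designed to bypass. The cross-contribution between \emph{adjacent} sub-cubes, or between pairs $(x,x+\zeta)$ that straddle a grid face with $|\zeta|$ small, is \emph{not} controlled by $\int_{\|\zeta\|_1>\ell}K_\tau|\zeta_1|$ and is in fact $O(\ell^{d-1})$ per face, i.e.\ of the same order as the ``mismatch penalty'' you want to exploit later. Moreover, because the cross term enters with a negative sign, dropping it is a lower bound on the total energy but does not give a useful \emph{lower} bound on each self-energy, which is what one needs for an averaging argument. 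The paper instead uses a sliding-window average $\Fcalt(u)\geq \frac{1}{L^d}\int_{[0,L)^d}\bar F_{\tau,\eps}(u,Q_l(z))\,\dz$ rather than a fixed grid, and, more importantly, the decomposition into $R_{i,\tau,\eps}$, $V_{i,\tau}$, $W_{i,\tau}$ and the residual Modica--Mortola term in Lemma~\ref{lemma:dec} is obtained by first passing to the sliced lower bound \eqref{eq:3.13} from \cite{DKR} (whose pieces have controllable signs) and then performing a genuine \emph{disintegration}: using $L$-periodicity and the Fubini-type identity \eqref{eq:disint} with the weight $G^{-1}(\rho)$ over the set $\Omega(a,b)$, so that the one-dimensional cross terms are attributed to intervals $(a,b)$ of the slice. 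This attribution is what makes the per-interval lower bounds of Lemmas~\ref{lemma:rpos1}--\ref{lemma:1dopt} possible; a crude grid split does not produce such interval-localised quantities.

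Second, the rigidity step. You flag, correctly, that you would need a quantitative stability version of Theorem~\ref{Thm:DKR}: local energy within $\delta$ of $C^*_{\tau,\eps}$ implies $L^1$-closeness to a one-dimensional minimizer with a modulus in $\delta$ uniform in $L$. This is not available and indeed would be considerably harder than Theorem~\ref{Thm:DKR} itself. The paper does not need it. It uses a \emph{qualitative} compactness statement (Proposition~\ref{lemma:local_rigidity}): for any fixed $M,l,\sigma$, there exist $\hat\tau,\hat\eps$ (depending only on $M,l,\sigma$, not on $L$) such that $\bar F_{\tau,\eps}(u,Q_l(z))<M$ forces $D_\eta(u,Q_l(z))\leq\sigma$. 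This follows from the Modica--Mortola $\Gamma$-convergence (Theorem~\ref{thm:gammaconv}) to the sharp-interface functional together with the sharp-interface rigidity Theorem~\ref{Thm:DR}, by a compactness/contradiction argument. All of the genuinely quantitative information is then extracted not from DKR but from the localised pieces $R_{i,\tau,\eps}$ and $V_{i,\tau}$ directly (Lemmas~\ref{lemma:rpos1}, \ref{lemma:rpos2}, Corollaries~\ref{cor:pos0}, \ref{cor:pos}, Lemmas~\ref{lemma:1dopt}, \ref{lemma:stimaa-1}, \ref{lemma:stability}, \ref{lemma:stimalinea}). Your gluing step, based on an interfacial mismatch cost, is replaced in the paper by a slice-by-slice accounting (Proposition~\ref{prop:final}) using the partition $A\cup A_1\cup\dots\cup A_d$ and the estimates of Lemma~\ref{lemma:stimalinea}, which is what turns local directional coherence into a global orientation. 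In short, your proposal identifies the correct target but both of its load-bearing steps, the self/cross splitting on a grid and the quantitative stability of Theorem~\ref{Thm:DKR}, would require new ideas to carry through, and the paper's disintegration-based decomposition and $\Gamma$-convergence-based rigidity are precisely the substitutes that make the argument close.
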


\subsection{Scientific context}

 For the sharp interface limit of $\mathcal F_{\tau,L,\eps}$ as $\eps\to0$, namely

\begin{equation}\label{E:S}
	\mathcal{F}_{\tau,L}(E):=\frac{1}{L^d}\Bigl[\per_1(E;[0,L)^d)\Bigl(\int_{\R^d}K_\tau(\zeta)|\zeta_1|\d\zeta-1\Bigr)-\int_{\R^d}\int_{[0,L)^d}|\chi_E(x)-\chi_E(x+\zeta)|K_\tau(\zeta)\dx\d\zeta\Bigr],
\end{equation}
where $E\subset\R^d$, $d\geq2$,  one-dimensionality and periodicity of minimizers in the thermodynamic limit has been proved in \cite{GS} in the discrete setting (for exponents $p>2d$) and in \cite{DR} in the continuous one (for exponents $p\geq d+2$). In \cite{Ker} the results of \cite{DR} have been recently extended to a small range of exponents below $p=d+2$. 

For the most physically relevant exponents such as $p=d+1$ (thin magnetic films), $p=d$ (3D micromagnetics) and $p=d-2$ (diblock copolymers), a rigorous proof of pattern formation in dimension $d\geq2$ is still a challenging open problem. The main difficulty, which in \cite{GS},\cite{GR},\cite{DR} and \cite{Ker} is resolved for higher exponents $p$, is to prove that symmetry breaking occurs, namely that minimizers of \eqref{E:S} have less symmetries than the functional itself. In this case symmetry under coordinate permutations is lost, but if one considers the functional analogous to \eqref{E:S} where the $1$-norm is substituted by the Euclidean norm, full rotational symmetry loss is expected to occur as well. Another family of kernels which is physically relevant and widely used in the literature is the Yukawa or screened Coulomb kernel (see e.g. {\cite{BBCH, Bores, CCA, GCLW, IR}} ). For this type of kernels one-dimensionality and periodicity of minimizers in the thermodynamic limit has been proved in \cite{DR2}.  

The diffuse interface counterpart of \eqref{E:S}, namely \eqref{def:fcalt}, is expected to be the most physical one due to the presence of continuous phase transitions (see e.g. the famous model for block copolymers introduced by Ohta and Kawasaki in \cite{OK}). 
For the diffuse interface problem, even less results are available in the literature on the structure of minimizers. 
This is due to the fact that in this setting the geometry of possible  phase transitions is much richer. In particular,  other phenomena such as small amplitude oscillations and slow transitions may occur, adding mathematical difficulty to the problem and requiring  new estimates. In dimension $d=1$, in \cite{RenWei} the authors show that close to the local minima of the corresponding sharp interface problem there are local minima of the approximating diffuse one. In \cite{GLL1D}, the authors show that the constant $C^*_{\tau,\eps}$ in the one-dimensional problem is attained on periodic functions of possibly infinite period.  % In a low density regime and for the Ohta-Kawasaki kernel, properties of the shape of droplets of minimizers for $\eps\ll1$ and $d=2$ were deduced from the analysis of the sharp interface limit in~\cite{GMS} and~\cite{GMS2}. 
The only result in dimension $d\geq2$ concerning one-dimensionality and periodicity of minimizers is the one given in \cite{DKR}. In \cite{DKR} one-dimensionality and periodicity of minimizers for $d\geq2$ was proved (see Theorem \ref{Thm:DKR}), for $L$ multiple of an optimal admissible  period $h^*_{\tau,\eps}$ and a range of positive $\tau,\eps$  depending on $L$. 

The aim of this paper is to show that a range of parameters for which pattern formation is observed can be chosen independently of $L$, no matter how large $L$ is (see Theorem \ref{thm:main}). 

The general strategy in order to choose an $L$-independent range of parameters is analogous to the one introduced in \cite{DR}[Section 7], involving:
 \begin{itemize}
 	\item a localization of the functional \eqref{def:fcalt} on small cubes of size $l<L$;
 	\item a decomposition of the functional into localized terms penalizing deviations from being one-dimensional in different ways;
 	\item a  partition of $[0,L]^d$ into sets $A\cup A_{1}\cup\dots\cup A_d$ where $z\in A_i$ if on the cube $Q_l(z)$ the function $u$ is $L^1$-close to stripes with boundaries orthogonal to $e_i$;
 	\item rigidity, stability and one-dimensional optimization arguments on slices in order to show that whenever $\tau=\tau(l)$ and $\eps=\eps(l)$ are sufficiently small,  $[0,L)^d=A_i$ for some $i\in\{1,\dots,d\}$.
 	\item when $[0,L)^d=A_i$, hence the symmetry is broken, conclude with a stability argument that $u=u(x_i)$. 
 \end{itemize}
However, not only the above-mentioned  rigidity, stability and one-dimensional optimization arguments are necessarily different from the ones of \cite{DR} due to the fact that the space of competitors is now given by functions instead of sets, but also the decomposition of the functional into localized terms is now different. In particular, a clever reformulation of the functional \eqref{def:fcalt} together with a disintegration argument are introduced in order to identify the contributions of $u$ on arbitrary intervals to the energy of the  one-dimensional slices. As a consequence, also the localization of the estimates introduced  in \cite{DKR} is much more delicate than the localization of the corresponding estimates for the sharp interface problem in \cite{DR}. As a counterpart, we believe that in their localized version given in this paper the estimates of \cite{DKR} are able to grasp in detail the mechanisms leading to the increase of the energy under the various possible deviations from one-dimensional profiles.

Regarding further fields of interest for pattern formation under attractive/repulsive forces in competition, we mention the following. Evolution problems of gradient flow type related to functionals with attractive/repulsive nonlocal terms in competition, both in presence and in absence of diffusion, are also well studied (see e.g. \cite{CCH,CCP, CDFS,DRR,craig, CT}). In particular, one would like to show convergence of the gradient flows or of their deterministic particle  approximations to configurations which are periodic or close to periodic states.
Another interesting direction would be to extend our rigidity results to non-flat surfaces without interpenetration of matter as investigated for rod and plate theories  in \cite{KS, LMP, OR}.

\section{Notation and preliminaries}

In the following, let $\N=\{1,2,\dots\}$, $d\geq 1$. 
Let $(e_1,\dots,e_d)$ be the canonical basis in $\R^d$ and for $y\in\R^d$ let $y_i=\langle y,e_i\rangle$ and $y_i^\perp:=y-y_ie_i$, where $\langle\cdot,\cdot\rangle$ is the Euclidean scalar product. 
For $y\in\R^d$, we denote by $\|y\|_1=\sum_{i=1}^d|y_i|$ its $1$-norm and we define $\|y\|_\infty=\max_i|y_i|$.
With a slight abuse of notation, we will sometimes identify $y^\perp_i\in[0,L)^d$ with its projection on the subspace orthogonal to $e_i$ or as an element of $\R^{d-1}$. 

%Since the subscript $i$ will be always present in the centre (namely $x_i^\perp$) of such $(d-1)$-dimensional cube, the implicit dependence on $i$ of $Q_r^\perp(x_i^\perp)$ should be clear. 

% In Section~\ref{sec:thm2}, instead of integrals on $[0,L)^d$ one will also consider integrals on smaller cubes centred at other points of $[0,L)^d$. 
For $z\in[0,L)^d$ and $r>0$, we also define 
\begin{equation*}
	\begin{split}
		Q_r(z)=\{x\in\R^d:\,\|x-z\|_\infty\leq r\} \qquad\text{and}\qquad Q_{r}^{\perp}(x^\perp_{i}) = \{z^\perp_{i}:\, \|x^{\perp}_{i} - z^{\perp}_{i} \|_\infty \leq r  \}. 
	\end{split}
\end{equation*}
% For $r> 0$ and $x^{\perp}_i$ we let  or we think of $x_i^\perp\in[0,L)^{d-1}$ and $Q_r^\perp(x_i^\perp)$ as a subset of $\R^{d-1}$. 

% In the whole paper we denote by $u$ functions in $W^{1,2}_{\loc}(\R^d,[0,1])$ which are $[0,L)^d$-periodic.

For every $i\in\{1,\dots,d\}$ and for all $x_i^\perp\in[0,L)^{d-1}$, we define the slices of $u$ in direction $e_i$ as
\[
u_{x_i^\perp}:\R\to[0,1],\quad u_{x_i^\perp}(s):=u(s e_i+x_i^\perp).
\]

Notice that whenever $u\in W^{1,2}_{\loc}(\R^d;\R)$ then $u_{x_i^\perp}\in  W^{1,2}_{\loc}(\R; \R)$ for almost every $x^\perp_i$. 
% The function $u_{x_i^\perp}$ is, for almost all $x_i^\perp\in[0,L)^{d-1}$, in $W^{1,2}_{\loc}(\R;[0,1])$. 
We denote by $\partial_i$ the partial derivatives of a function with respect to $e_i$, $i\in\{1,\dots,d\}$.

% We will denote by $\hausd^{d-1}$ the 
Given a measurable set $A\subset\R^k$ with $k\in\{1,\dots,d\}$, we denote by $|A|$ its $k$-dimensional Lebesgue measure (or if A is contained in some $k$-dimensional plane of $\R^d$, its Hausdorff $k$-dimensional measure), being always clear from the context which will be the dimension $k$.

Moreover, let $\chi_A:\R^d\to\R$ be the function defined by
\begin{equation*}
	\chi_A(x)=\left\{\begin{aligned}
		&1 && &\text{if $x\in A$}\\
		&0 && &\text{if $x\in\R^d\setminus A$.}
	\end{aligned}\right.
\end{equation*}

A set $E\subset\R^d$ is of (locally) finite perimeter if the distributional derivative of $\chi_E$ is a (locally) finite measure.
We denote by $\partial E$ be the reduced boundary of $E$ and by $\nu^E$ the exterior normal to $E$.

Then one can define the $1$-perimeter of a set relative to $[0,L)^d$ as

\[
\per_1(E,[0,L)^d):=\int_{\partial E\cap [0,L)^d}\|\nu^E(x)\|_1\d\mathcal H^{d-1}(x)
\]
where $\mathcal H^{d-1}$ is the $(d-1)$-dimensional Hausdorff measure.

By extending the classical Modica-Mortola result~\cite{MM} to the anisotropic norm $\|\cdot\|_1$ and by continuity of the nonlocal term in~\eqref{E:F} with respect to $L^1$ convergence of functions valued in $[0,1]$, one has the following

\begin{theorem}
	\label{thm:gammaconv}
	As $\eps\to0$, the functionals $\Fcalt$ $\Gamma$-converge in $BV_{\loc}(\R^d;[0,1])$ to the functional
	\begin{equation}\label{eq:flim}
		\mathcal{F}_{\tau,L}(u):=\left\{
		\begin{aligned}
			&\frac{1}{L^d}\Bigl[\per_1(E;[0,L)^d)\Bigl(\int_{\R^d}K_\tau(\zeta)|\zeta_1|\d\zeta-1\Bigr)\\
			&\quad-\int_{\R^d}\int_{[0,L)^d}|\chi_E(x)-\chi_E(x+\zeta)|K_\tau(\zeta)\dx\d\zeta\Bigr] && &\text{if $u=\chi_E$}\\
			&+\infty && &\text{otherwise.}
		\end{aligned}
		\right.
	\end{equation}
\end{theorem}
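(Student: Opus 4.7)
The plan is to split the functional $\Fcalt$ into two pieces: the Modica--Mortola type part $\mathcal M_{\alpha_{\eps,\tau}}(u,[0,L)^d)$ (multiplied by the $\eps$-independent constant $\int_{\R^d} K_\tau(\zeta)|\zeta_1|\d\zeta-1=J_c/\tau-1$, which is strictly positive for $\tau<J_c$) and the nonlocal quadratic-difference term. All of the singular $\eps$-dependence sits in the first piece, since $\alpha_{\eps,\tau}=\eps\tau^{1/\beta}\to 0$ as $\eps\to 0$, while the nonlocal piece is continuous with respect to $L^1$-convergence of $[0,1]$-valued periodic functions on $[0,L)^d$.

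For the $\Gamma$-liminf inequality, I would take $u_\eps\to u$ in $BV_{\loc}(\R^d;[0,1])$ with $\liminf_\eps \Fcalt(u_\eps)<+\infty$, and first note that the nonlocal term is uniformly bounded by $L^d\int K_\tau\d\zeta<+\infty$ (which is finite since $|u_\eps-u_\eps(\cdot+\zeta)|^2\le 1$ and $p\ge d+2>d$). This forces $\mathcal M_{\alpha_{\eps,\tau}}(u_\eps,[0,L)^d)$ to remain uniformly bounded, and in particular $\int_{[0,L)^d} W(u_\eps)\d x\to 0$, so $u=\chi_E$ a.e.\ for some set $E\subset[0,L)^d$ of finite $\per_1$-perimeter. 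Next, the AM--GM estimate
\[
3\alpha\|\nabla u\|_1^2+\frac{3}{\alpha}W(u)\ge 6\|\nabla u\|_1\sqrt{W(u)}=6\|\nabla\Phi(u)\|_1,\qquad \Phi(t):=\int_0^t s(1-s)\d s,
\]
combined with the coarea formula for the anisotropic $\|\cdot\|_1$-total variation and lower semicontinuity of $\per_1$, yields $\liminf_\eps \mathcal M_{\alpha_{\eps,\tau}}(u_\eps,[0,L)^d)\ge 6\Phi(1)\per_1(E;[0,L)^d)=\per_1(E;[0,L)^d)$, since $\Phi(1)=1/6$ (exactly the calibration built into the factor $3$ in $\mathcal M_\alpha$). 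Finally, I would pass to the limit in the nonlocal term by dominated convergence with dominant $L^d K_\tau\in L^1$, and exploit the identity $|a-b|^2=|a-b|$ for $a,b\in\{0,1\}$ to turn $|\chi_E-\chi_E(\cdot+\zeta)|^2$ into $|\chi_E-\chi_E(\cdot+\zeta)|$, matching $\mathcal F_{\tau,L}(\chi_E)$.

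For the $\Gamma$-limsup, given $u=\chi_E$ with $E$ of finite perimeter on the torus, I would construct the recovery sequence by the standard (anisotropic) Modica--Mortola procedure: approximate $\partial E$ by a polyhedral surface with faces in general position, and in a tubular neighborhood of each face of outer normal $\nu$ interpolate $\chi_E$ through the one-dimensional optimal transition profile $\varphi_{\alpha_{\eps,\tau}}$ (the monotone solution of $\alpha\varphi'=\sqrt{W(\varphi)}$) parametrized along the normal. A direct one-dimensional computation gives a face-wise energy contribution equal to $6\Phi(1)\|\nu\|_1\mathcal H^{d-1}(\text{face})=\|\nu\|_1\mathcal H^{d-1}(\text{face})$, so a diagonal/density argument produces $u_\eps\in W^{1,2}_{\loc}(\R^d;[0,1])$, $[0,L)^d$-periodic, with $u_\eps\to\chi_E$ in $L^1([0,L)^d)$ and $\mathcal M_{\alpha_{\eps,\tau}}(u_\eps,[0,L)^d)\to\per_1(E;[0,L)^d)$. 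The nonlocal term converges by the same $L^1$-continuity argument, giving $\Fcalt(u_\eps)\to \mathcal F_{\tau,L}(\chi_E)$.

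The main point to be careful about is the anisotropic character of $\mathcal M_\alpha$, since $\|\nabla u\|_1^2=\bigl(\sum_i|\partial_i u|\bigr)^2$ carries mixed terms $|\partial_i u||\partial_j u|$. The analysis above shows that this anisotropy is exactly compensated by the $\|\cdot\|_1$-anisotropy of $\per_1$ in the limit, with the coefficient $3$ in $\mathcal M_\alpha$ calibrated so that the transition constant $6\Phi(1)$ equals $1$. Everything else, notably the passage to the limit in the nonlocal term, is a routine consequence of $L^1$-continuity on the bounded fundamental domain together with integrability of $K_\tau$.
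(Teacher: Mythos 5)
Your approach is exactly the one the paper invokes: the paper itself gives no detailed proof, stating only that the result follows ``by extending the classical Modica--Mortola result to the anisotropic norm $\|\cdot\|_1$ and by continuity of the nonlocal term with respect to $L^1$ convergence,'' which is precisely your decomposition into the Modica--Mortola piece (handled by AM--GM, $\Phi(t)=\int_0^t s(1-s)\,\d s$, and anisotropic lower semicontinuity/polyhedral recovery) and the nonlocal piece (handled by $L^1$-continuity). Your $\Gamma$-liminf and $L^1$-continuity arguments are correct; one small calibration slip in the recovery step is worth flagging: on a face with outer normal $\nu$ the optimal $1$-D profile must be tuned to $\|\nu\|_1$, i.e.\ $\alpha\|\nu\|_1\varphi'=\sqrt{W(\varphi)}$, since with the isotropic ODE $\alpha\varphi'=\sqrt{W(\varphi)}$ you wrote, a direct computation yields a face-wise density $\tfrac12\bigl(1+\|\nu\|_1^2\bigr)\geq\|\nu\|_1$ rather than the claimed $6\Phi(1)\|\nu\|_1=\|\nu\|_1$, with equality only for axis-parallel faces.
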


Notice that the constant $3$ in~\eqref{E:F} is chosen in such a way that
\[
6\int_0^1t(1-t)\dt=1,
\]
so that the constant in front of the $1$-perimeter in~\eqref{eq:flim} is equal to $1$.

The kernel $K_\tau$ is, as shown in~\cite{DR}, reflection positive, namely it satisfies the following property: the function
\begin{equation*}
	\begin{split}
		\widehat K_\tau(t) := \int_{\R^{d-1}} K_\tau(t, \zeta_2,\ldots,\zeta_d)  \d\zeta_2\cdots\d\zeta_d. 
	\end{split}
\end{equation*}

is the Laplace transform of a nonnegative function.

Regarding the limit functional~\eqref{eq:flim}, we recall the following result, obtained in~\cite{DR} for $p\geq d+2$ and extended to a range of exponent below $d+2$ in \cite{Ker}.

\begin{theorem}
	\label{Thm:DR}
	Let $d\geq1$, $L>0$. Then, there exists $0<\bar{\alpha}<1$ and  $\tilde{\tau}_L>0$ such that, for all $p=d+2-\alpha$ with $\alpha\leq\bar \alpha$ and for all $0<\tau\leq\tilde{\tau}_L$, the minimizers of the functional $\mathcal{F}_{\tau,L}$ in~\eqref{eq:flim} are periodic unions of stripes.
	%The admissible width (which may not be unique) of these stripes is denoted by $h_{\tau,L}$.
\end{theorem}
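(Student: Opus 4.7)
The plan is to adapt the $L$-independent localization and rigidity scheme developed in \cite{DR}[Section 7] for the sharp-interface model to the diffuse interface functional $\Fcalt$, using the one-dimensional diffuse-interface estimates of \cite{DKR} as the main slice-wise input. The principal novelty, forced by the fact that competitors are now $W^{1,2}_{\loc}$ functions rather than characteristic functions of sets of finite perimeter, is to recast $\Fcalt$ in a form that can be \emph{disintegrated} along slices in each coordinate direction and further \emph{localized} on arbitrary one-dimensional intervals, so that the control obtained on a cube of fixed side length does not deteriorate as $L\to+\infty$.

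First, I would fix a mesoscale $l>0$ independent of $L$ and write $\Fcalt$ as an average over cubes $Q_l(z)\subset[0,L)^d$ of localized contributions. The Modica--Mortola part $\mathcal M_{\at}$ splits exactly direction by direction and, in each direction $e_i$, as an integral over $x_i^\perp$ of the one-dimensional Modica--Mortola functional of the slice $u_{x_i^\perp}$. For the nonlocal repulsive term, I would use reflection positivity of $K_\tau$ to reduce matters to the one-dimensional kernel $\widehat K_\tau$ along each axis, and then apply a disintegration argument to attribute its contribution on each slice to a sum over sub-intervals $[a,b)\subset\R$ of a localized density $\overline{\mathcal M}^i_{\at}(u,x_i^\perp,[a,b))$, in the spirit of the quantities introduced in \cite{DKR} but now genuinely interval-based. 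Combining slice-wise with the Modica--Mortola part yields a pointwise lower bound on the localized energy density by $C^*_{\tau,\eps}$, with a quantitative gap controlled by the $L^1$-distance of $u$ on the cube from the family of optimal one-dimensional stripe profiles.

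With the localization in place, I would partition $[0,L)^d$ into a ``bad'' set $A$ and ``directional'' sets $A_1,\dots,A_d$, declaring $z\in A_i$ whenever the restriction of $u$ to $Q_l(z)$ is $L^1$-close to a stripe pattern with interfaces orthogonal to $e_i$. The hardest step, and the main obstacle, is the rigidity assertion that for $\tau$ and $\eps$ sufficiently small (in terms of $l$ only) one must have $A=\emptyset$ and exactly one of the $A_i$ nonempty. This is strictly more delicate than the sharp-interface case of \cite{DR}, because diffuse profiles can in principle exhibit small-amplitude oscillations or slow transitions that a purely set-theoretic analysis does not see. I would handle this by combining three ingredients: (i) the slice-wise localized lower bound from the previous step, which produces a strictly positive quantitative excess whenever $u$ deviates from a stripe profile on $Q_l(z)$; (ii) a stability estimate showing that adjacent cubes assigned to different $A_i$ and $A_j$ produce, on transverse slices joining them, one-dimensional profiles with energy strictly larger than $C^*_{\tau,\eps}$; and (iii) a geometric argument, analogous to the one in \cite{DR}, ruling out configurations that are locally close to stripes in two distinct orthogonal directions.

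Once $[0,L)^d=A_i$ for a single $i$, a further stability argument along the lines of the proof of \Cref{Thm:DKR} shows that the slices $u_{x_i^\perp}$ must be independent of $x_i^\perp$, because otherwise the slice-wise strict inequality in the one-dimensional lower bound, averaged over $x_i^\perp$, would give a total energy strictly above $C^*_{\tau,\eps}$. Hence $u=u(x_i)$, and the assumption $L=2kh^*_{\tau,\eps}$ then forces $u$ to be a one-dimensional $2h^*_{\tau,\eps}$-periodic profile attaining $C^*_{\tau,\eps}$, i.e., an optimal one-dimensional function of width $h^*_{\tau,\eps}$.
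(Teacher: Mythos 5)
Your proposal proves the wrong statement. Theorem~\ref{Thm:DR} concerns the \emph{sharp interface} functional $\mathcal F_{\tau,L}$ of \eqref{eq:flim}, whose competitors are sets $E\subset\R^d$ of finite perimeter (equivalently characteristic functions $u=\chi_E$), not the diffuse functional $\Fcalt$. It is stated for arbitrary $L>0$, with a smallness threshold $\tilde\tau_L$ that is explicitly allowed to depend on $L$; it involves no parameter $\eps$; it does not assume $L$ to be commensurate with an optimal stripe period; and it also covers exponents $p=d+2-\alpha$ slightly below $d+2$. In this paper it appears purely as a \emph{recall} of a known result from \cite{DR} (for $p\geq d+2$) and \cite{Ker} (for $\alpha\leq\bar\alpha$); no proof of it is given here, and none is required.

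What you have sketched is, instead, an outline of the proof of Theorem~\ref{thm:main}: you work with $\Fcalt$ and $W^{1,2}_{\loc}$ competitors, fix a mesoscale $l$ independent of $L$, disintegrate the energy onto slices and localize on intervals (as in Section~\ref{sec:dec}), partition $[0,L)^d$ into $A\cup A_1\cup\dots\cup A_d$ via the $L^1$-distance $D_\eta$ to stripe patterns, invoke local rigidity and stability estimates, and conclude under the hypothesis $L=2kh^*_{\tau,\eps}$ that minimizers are optimal one-dimensional $2h^*_{\tau,\eps}$-periodic functions. None of these hypotheses or tools belong to Theorem~\ref{Thm:DR}. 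In particular, nothing in your argument yields one-dimensionality and periodicity of sharp-interface minimizers at \emph{arbitrary} fixed $L>0$: that proof in \cite{DR} relies on reflection positivity, a perimeter-based slicing decomposition, and rigidity estimates tailored to sets, and must additionally determine the optimal stripe width and number when $L$ is not a multiple of $2h^*_\tau$.
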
 
In the above, a periodic union of stripes of width $h$ is by definition a set which, up to Lebesgue null sets, is of the form $V_i^\perp+\widehat E e_i$ for some $i\in\{1,\dots,d\}$, where $V_i^\perp$ is the $(d-1)$-dimensional subspace orthogonal to $e_i$ and $\widehat E\subset\R$ with $\widehat E=\bigcup_{k=0}^N(2kh+\nu,(2k+1)h+\nu)$ for some $\nu\in\R$ and some $N\in\N$.

Let us now recall the decomposition of the functional $\Fcalt$ obtained in \cite{DKR}[Section 3].
In \cite{DKR}[Proposition 3.1] one gets the following lower bound for the functional $\Fcalt$:
\begin{align}
\label{eq:3.13}
\Fcalt(u)\geq&\frac{1}{L^d} \sum_{i=1}^d\Bigl\{\int_{[0,L)^{d-1}}\Bigl[-\Mi{0}{L}{i}+\Gcal{i}\Bigr]\dx_i^{\perp}+\mathcal I^i_{\tau,L}(u)\Bigr\}\notag\\
&+\frac{1}{L^d}\mathcal W_{\tau,L,\eps}(u),\qquad
\end{align}
where 

\begin{align}
\Mi{s}{t}{i}&:=3\at\int_{[s,t]\cap\{\nabla u(\cdot e_i+x_i^\perp)\neq0\}}|\partial_iu_{x_i^\perp}(\rho)|\|\nabla u(\rho e_i+x_i^\perp)\|_1\d\rho\notag\\
&+\frac{3}{\at}\int_{[s,t]\cap\{\nabla u(\cdot e_i+x_i^\perp)\neq0\}}W(u_{x_i^\perp}(\rho)))\frac{|\partial_iu_{x_i^\perp}(\rho)|}{\|\nabla u(\rho e_i+x_i^\perp)\|_1}\d\rho,
\label{eq:3.2}
\end{align}

\begin{equation}
\label{def:Gcal}
\begin{split}
\Gcal{i}:=&\Mi{0}{L}{i}\int_{\R}|\zeta_i|\widehat K_\tau(\zeta_i)\d\zeta_i - 
\\
&-\int_{\R}\int_0^L|u_{x_i^\perp}(x_i)-u_{x_i^\perp}(x_i+\zeta_i e_i)|^2\widehat K_\tau(\zeta_i)\dx_i\d\zeta_i,
\end{split}
\end{equation}

\begin{equation}
\label{eq:itl}
\mathcal I^i_{\tau,L}(u):=\frac1d\int_{\{\zeta_i>0\}}\int_{[0,L)^d}[(u(x+\zeta_i e_i)-u(x))-(u(x+\zeta)-u(x+\zeta_i^\perp))]^2K_\tau(\zeta)\dx\d\zeta.
\end{equation}
and 
\begin{equation}
\label{def:Wcal}
\mathcal W_{\tau,L,\eps}(u)=\frac{3(C_\tau-1)}{\at}\int_{\{\nabla u=0\}\cap[0,L)^d}W(u(x))\dx,\qquad C_\tau=\int_{\R}|\zeta_i|\widehat K_\tau(\zeta_i)\d\zeta_i.
\end{equation}

In particular, since showing that the minimizers for the \rhs of~\eqref{eq:3.13} are one-dimensional implies that the minimizers for $\Fcal_{\tau,L,\varepsilon}$ are one-dimensional, this allows us to reduce to prove one-dimensionality of the minimizers for the lower bound functional (i.e., the \rhs of \eqref{eq:3.13}).

Given the numerous slicing arguments, it is also convenient to  define the slicing of $\mathcal I^i_{\tau,L}$ as follows

\begin{equation*}
\mathcal I^i_{\tau,L}(u)=\int_{[0,L)^{d-1}}\overline{\mathcal I}^i_{\tau}(u,x_i^\perp,[0,L))\dx_i^\perp,
\end{equation*}
where
\begin{equation}
\label{eq:slicingMathCalI}
\overline{\mathcal I}^i_{\tau}(u,x_i^\perp,[0,L)) :=\frac{1}{d}\int_0^L\int_{\{\zeta_i>0\}}[(u(x+\zeta_i e_i)-u(x))-(u(x+\zeta)-u(x+\zeta_i^\perp))]^2K_\tau(\zeta)\d\zeta\dx_i
\end{equation}
and where $x = x_ie_i + x_i^\perp$.

We also recall the estimate contained in Lemma 4.3 in \cite{DKR}, namely that for all $\rho\in\R$ the following holds:
  \begin{equation}
	\label{eq:partpos0}
	\begin{split}
		|\rho|\Mi{0}{L}{i} & =\int_{0}^L \Mi{s}{s+\rho}{i}\dx_{i}\\ &\geq\int_{0}^L|\omega(u_{x_{i}^{\perp}} (s+  \rho)) - \omega(u_{x_{i}^{\perp}}(s))|\ds,
	\end{split}
\end{equation}
where $\omega:[0,1]\to[0,1]$ is defined by
\begin{equation}
	\label{eq:stimaOmega}
	\omega(t)=\int_0^t6\sqrt{W(s)}\ds=3t^2-2t^3.
\end{equation}
In the first equality of \eqref{eq:partpos0} one uses $[0,L)^d$-periodicity of $u$ and in the second inequality the elementary inequality $a^2+b^2\geq2ab$.

As observed in \cite{DKR}[Remark 4.1], the function  $\omega$ satisfies the following inequality: for $a,b\in[0,1]$ with $a=b+t$, $t>0$

\begin{equation}\label{eq:omegaab}
\frac{\omega(a)-\omega(b)}{|a-b|^2}=\frac{6b(1-b-t)}{t}+3-2t\geq3-2t\geq1.
\end{equation}
and equality in the last inequality holds if and only if $a=1$ and  $b=0$.

In order to measure the $L^1$ distance of the functions $u$ to stripes having boundaries orthogonal to a certain direction we will need the following definition, analogous to the one introduced for sets in \cite{DR}.

\begin{definition}
	\label{def:defDEta}
	For every $\eta$ we denote by $\Acal^{i}_{\eta}$ the family of all sets $F$ such that  
	\begin{enumerate}[(i)]
		\item they are union of stripes with boundaries orthogonal to $e_i$;
		\item their connected components of the boundary are distant at least $\eta$. 
	\end{enumerate}
	We denote by 
	\begin{equation} %---{{{
	\label{eq:defDEta}
	\begin{split}
	D^{i}_{\eta}(u,Q) := \inf\Big\{ \frac{1}{|Q|} \int_{Q} |u -\chi_{F}|:\ F\in \Acal^{i}_{\eta} \Big\} \quad\text{and}\quad D_{\eta}(u,Q) = \inf_{i} D^{i}_{\eta}(u,Q).
	\end{split}
	\end{equation} %---}}}
\end{definition}

Also in the case of functions, such a distance enjoys some useful properties, that we list below (for a proof see the analogous Remark 7.4 in \cite{DR}).
\begin{remark}
	\label{rmk:lip} \ 
	\begin{enumerate}[(i)]
		\item The map $z\mapsto D_{\eta}(u,Q_{l}(z))$ is Lipschitz, with Lipschitz constant $C_d/l$, where $C_d$ is a constant depending only on the dimension $d$. 
		\item { For every $\nu>0$ there exists ${\sigma}_0= \sigma_0(\nu)$ such that  for every $\sigma \leq \sigma_0 $ whenever $D^{j}_{\eta}(u,Q_{l}(z))\leq \sigma$ and $D^{i}_{\eta}(u,Q_{l}(z))\leq \sigma$ with $i\neq j$ for some $\eta>0$,  it holds 
		\begin{equation}
		\label{eq:gsmstr2}
		\begin{split}
		\min\big\{\|u\|_{L^1(Q_l(z))}, \|1-u\|_{L^1(Q_l(z))} \big\} \leq\nu l^d. 
		\end{split}
		\end{equation}}
		\end{enumerate}
	
	\end{remark}

Finally, we recall from \cite{DKR} (Lemma 5.6) the following

\begin{lemma}
	\label{lemma:int14}
	Let $u$ be such that $D^j_{\eta}(u,Q_l(z))\leq\bar \sigma$ for some $j\neq i$.
	Then, for any $\alpha>0$ and  $|s_0-t_0|\leq\alpha$, if $\bar{\sigma}$ is sufficiently small
	\begin{equation}
		\label{eq:gstr3}
		\begin{split}
			\int_{\{ |\zeta_{j}^\perp| < \alpha \}} \int_{s_{0}-\alpha}^{s_{0}} \int_{t_0-x_j}^{t_0-x_j + \alpha}\!\! \Big\{\frac14- \big[ u(x_j^\perp + \zeta_j^\perp + x_j e_j ) - u(x_j^\perp + \zeta_j^\perp + x_je_j+ \zeta_j e_j)  \big]\Big\}^2 \d\zeta_j\dx_j\d\zeta_j^\perp > \frac{1}{8}\alpha^{d+1}.
		\end{split}
	\end{equation}
\end{lemma}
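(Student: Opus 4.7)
My plan is to replace $u$ by a union of stripes $\chi_F$ oriented orthogonally to $e_j$ and then reduce the resulting expression to a one-dimensional combinatorial computation. By Definition~\ref{def:defDEta}, the hypothesis $D^j_\eta(u, Q_l(z)) \leq \bar\sigma$ yields $F \in \mathcal{A}^j_\eta$ with $\|u - \chi_F\|_{L^1(Q_l(z))} \leq \bar\sigma\, l^d$; since $F$ is a union of stripes orthogonal to $e_j$, there exists $\hat F \subset \R$ such that $\chi_F(y) = \chi_{\hat F}(\langle y, e_j\rangle)$.

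Writing $u = \chi_F + g$ and denoting $\Delta h(y, \zeta_j) := h(y) - h(y + \zeta_j e_j)$, I would expand
\[
\Bigl(\tfrac14 - \Delta u\Bigr)^2 = \Bigl(\tfrac14 - \Delta \chi_F\Bigr)^2 - 2\Bigl(\tfrac14 - \Delta \chi_F\Bigr)\Delta g + (\Delta g)^2,
\]
drop the nonnegative last term, and use $|\tfrac14 - \Delta \chi_F| \leq 5/4$ to obtain, upon integration over the prescribed domain (arranged so that the evaluation points lie inside $Q_l(z)$), the lower bound
\[
I(u) \ \geq \ I(\chi_F) - C_d \bar\sigma\, l^d.
\]
Since $\chi_F(x_j^\perp + \zeta_j^\perp + x_j e_j) = \chi_{\hat F}(x_j)$ is independent of the perpendicular variables, the change of variables $y_j = x_j + \zeta_j$ factorises
\[
I(\chi_F) = \bigl|\{|\zeta_j^\perp| < \alpha\}\bigr|\cdot J(\hat F), \quad J(\hat F) := \int_{s_0 - \alpha}^{s_0}\!\int_{t_0}^{t_0 + \alpha}\! \Bigl(\tfrac14 - [\chi_{\hat F}(x_j) - \chi_{\hat F}(y_j)]\Bigr)^2 dy_j\, dx_j.
\]
Setting $a := |\hat F \cap [s_0 - \alpha, s_0]|$ and $b := |\hat F \cap [t_0, t_0 + \alpha]|$, and enumerating the four cases $(\chi_{\hat F}(x_j), \chi_{\hat F}(y_j)) \in \{0,1\}^2$, direct computation yields
\[
J(\hat F) = \tfrac{1}{16}\bigl(\alpha^2 + 8\alpha a + 24\alpha b - 32 a b\bigr) \geq \tfrac{\alpha^2}{16},
\]
with equality only at the degenerate corners $(a,b) \in \{(0,0), (\alpha, \alpha)\}$. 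Together with $|\{|\zeta_j^\perp| < \alpha\}| \geq 2\alpha^{d-1}$, this gives $I(\chi_F) \geq \alpha^{d+1}/8$, strict away from those degenerate configurations, and combining with the substitution estimate concludes for $\bar\sigma$ chosen small enough to absorb the error into the positive gap.

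The main obstacle is the degenerate case $a = b \in \{0, \alpha\}$, in which $\hat F$ is essentially constant on both intervals of length $\alpha$ and the bound $I(\chi_F) = \alpha^{d+1}/8$ is tight: here the error term $-C_d \bar\sigma\, l^d$ cannot be absorbed merely by sending $\bar\sigma \to 0$. The strict inequality then has to be extracted by retaining the discarded quadratic term $\int (\Delta g)^2 \geq 0$, which is strictly positive whenever $u$ does not coincide exactly with a stripe configuration (a situation that can be treated separately by quantifying the fact that such coincidence is incompatible with the ambient structural hypotheses used in applications of the lemma), or by exploiting the strict slack in $|\{|\zeta_j^\perp| < \alpha\}|$ when $d \geq 3$. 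Making this quantitative refinement precise is the delicate technical point, and the one that ultimately permits choosing $\bar\sigma$ uniformly in $L$.
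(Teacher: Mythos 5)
Your strategy---approximating $u$ by $\chi_F$ with $F\in\Acal^j_\eta$, expanding the square, and estimating the $L^1$-error---is sound, and the computation $J(\hat F)=\tfrac{1}{16}(\alpha^2+8\alpha a+24\alpha b-32ab)\geq\tfrac{\alpha^2}{16}$ is correct as an algebraic identity. Two remarks.

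First, the indices in \eqref{eq:gstr3} appear to carry a misprint: in the sole application of the lemma, inside the proof of Lemma~\ref{lemma:stability}, the sliced and integrated variables are indexed by $i$ (the direction along which $u$ jumps), while the stripe hypothesis $D^j_\eta\leq\bar\sigma$ holds for $j\neq i$. Were the displayed integral really taken in the $e_j$-direction, the clause ``for some $j\neq i$'' in the hypothesis would play no role. Under the corrected reading the argument collapses: since $F$ is orthogonal to $e_j$, $\chi_F$ is constant along $e_i$, the increment $\chi_F(\cdot)-\chi_F(\cdot+\zeta_ie_i)$ vanishes identically, the integrand equals $\tfrac{1}{16}$, and $I(\chi_F)=\tfrac{1}{16}\bigl|\{|\zeta_i^\perp|<\alpha\}\bigr|\,\alpha^2=2^{d-5}\alpha^{d+1}$ (for the $\ell^\infty$-ball). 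Your case analysis in $(a,b)$ is then unnecessary, though it does reproduce the same bound for the literal $j$-index reading.

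Second, the gap you flag is genuine, and neither remedy you sketch closes it in $d=2$. There $I(\chi_F)$ equals exactly $\tfrac{\alpha^3}{8}$, and the $L^1$-error has no definite sign: taking $\hat F=\emptyset$, $u=\varepsilon\phi$ with $0\leq\phi\leq1$ smooth and the triple integral of $\Delta\phi$ positive, one obtains $I(u)=\tfrac{\alpha^3}{8}-\tfrac{\varepsilon}{2}\iint\Delta\phi+O(\varepsilon^2)<\tfrac{\alpha^3}{8}$ for small $\varepsilon$, while $D^j_\eta(u,Q_l(z))\leq\varepsilon<\bar\sigma$. Retaining $(\Delta g)^2$ does not help, since the cross term $-2(\tfrac14-\Delta\chi_F)\Delta g$ is linear in $\varepsilon$ and dominates the quadratic one; and the slack in the measure of the perpendicular ball is zero at $d=2$. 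The upshot is that a strict inequality with the sharp constant $\tfrac18$ cannot be obtained along this route for $d=2$, and the statement should be read with a strictly smaller constant (e.g.\ $\tfrac{1}{16}$), so that the $\bar\sigma$-error, of order $\alpha\bar\sigma l^d$, can be absorbed into the positive gap once $\bar\sigma$ is chosen small depending on $\alpha$ and $l$. That weaker form is all that is used in Lemma~\ref{lemma:stability}, where the final strict positivity comes from taking $\alpha$ small, not from this lemma.
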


\section{Decomposition of the functional}\label{sec:dec}
 The aim of  this section is to prove the localization and decomposition of the functional $\Fcalt$ contained in  Lemma \ref{lemma:dec}. Moreover, at the end of this section, we can state the Local Rigidity Proposition \ref{lemma:local_rigidity}. The decomposition of Lemma \ref{lemma:dec} is new and is able to single out the different contributions to the energies of intervals of one-dimensional slices. In order to find it we use first $L$-periodicity of $u$ and then a disintegration argument.  
 
In order to state Lemma \ref{lemma:dec}, one needs some preliminary definitions.

For $a\in[0,L)$, $b\in\R$ define
\begin{equation}\label{eq:Omegaab}
\Omega(a,b)=\left\{\begin{aligned}
(s,\rho)\in [0,L]\times\R:[s,s+\rho]\supset[a,b]\text{ or }[s+\rho,s]\supset [a,b]\text{ if $a\leq b$}\\
(s,\rho)\in [0,L]\times\R:[s,s+\rho]\supset[b,a]\text{ or }[s+\rho,s]\supset [b,a]\text{ if $b\leq a$}
\end{aligned}\right\}
\end{equation}
and for all  $\rho\in\R$ let
\begin{equation}\label{eq:gsrho}
G(\rho)=|\rho|\min\{|\rho|,L\}.
\end{equation}
Notice that, for any $s\in[0,L)$, $\rho\in\R$
\begin{equation}\label{eq:gom}
\int_{s}^{s+L}\int_{\R}\chi_{\{(a,b):(s,\rho)\in\Omega(a,b)\}}(a,b)\db\da=G(\rho).
\end{equation}
Then, for any $i\in\{1,\dots,d\}$, $x_{i}^\perp\in[0,L)^{d-1}$ and for any interval $I\subset[0,L)$ define
\begin{align}
R_{i,\tau,\eps}(u,&x_i^\perp, I)=-\overline{\mathcal M}^i_{\at}(u,x_i^\perp, I)\notag\\
&+\int_I\int_{\R}\iint_{(s,\rho)\in\Omega(a,b)}G^{-1}(\rho)\Bigl(\Mi{s}{s+\rho}{i}-(u(s)-u(s+\rho))^2\Bigr)\widehat K_\tau(\rho)\Bigr]\ds\d\rho\db\da,\label{eq:ritau}
\end{align}

\begin{align}
V_{i,\tau}(u,x_i^\perp,I)&=\frac{1}{2d}\int_I\int_{\R}\iint_{(s,\rho)\in\Omega(a,b)}\int_{\R^{d-1}}G^{-1}(\rho)f_u(x_i^\perp,s,y_i^\perp,s+\rho)K_\tau(\rho e_i+(y_i^\perp-x_{i}^\perp))\dy_i^\perp\ds\d\rho\db\da\label{eq:vitau}
\end{align}
with
\begin{align}\label{eq:fu}
	f_u(x_i^\perp,s,y_i^\perp,s+\rho)=[(u(x_i^\perp+(s+\rho)e_i)-u(x_i^\perp+se_i))-(u(x_i^\perp+y_i^\perp+(s+\rho)e_i)-u(x_i^\perp+y_i^\perp+se_i))]^2,
\end{align}
and 
\begin{align}
W_{i,\tau}(u,x)=\frac{1}{2d}\int_{\R^d}f_u(x_i^\perp,x_i,y_i^\perp,y_i)K_\tau(y-x)\dy.
\end{align}

We can now state the main result of this section 
\begin{lemma}\label{lemma:dec}
	One has that
	\begin{equation}
		\Fcalt(u)\geq\frac{1}{L^d}\sum_{i=1}^d\int_{[0,L)^d}\bar F_{i,\tau,\eps}(u,Q_l(z))\dz,
	\end{equation}
where 
	\begin{align}
	\bar{F}_{i,\tau,\eps}(u,Q_{l}(z))&=\frac{1}{l^d}\int_{Q_l^\perp(z_i^\perp)}R_{i,\tau,\eps}(u,x_i^\perp,(z_i-l/2,z_i+l/2))\dx_i^\perp\notag\\
	&+\frac{1}{l^d}\int_{Q_l^\perp(z_i^\perp)}V_{i,\tau}(u,x_i^\perp,(z_i-l/2,z_i+l/2))\dx_i^\perp\notag\\
	&+\frac{1}{l^d}\int_{Q_l(z)}W_{i,\tau}(u,x)\dx\notag\\
	&+\frac{3(C_\tau-1)}{dl^d}\int_{\{\nabla u=0\}\cap Q_l(z)}\frac{W(u(x))}{\at}\dx.\label{eq:3.9}
	\end{align}
\end{lemma}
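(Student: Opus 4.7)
The plan is to start from the lower bound \eqref{eq:3.13} established in \cite{DKR} and rewrite each of its three types of contribution --- the one-dimensional slice energy $-\Mi{0}{L}{i}+\Gcal{i}$, the cross-direction interaction $\mathcal I^i_{\tau,L}$, and the bulk well term $\mathcal W_{\tau,L,\eps}$ --- in a form localized on the cubes $Q_l(z)$. The whole job is done by two ingredients. The first is the disintegration identity \eqref{eq:gom}, which by Fubini amounts to inserting
\[
1 = G^{-1}(\rho)\int_0^L\int_\R \chi_{\{(a,b):(s,\rho)\in\Omega(a,b)\}}(a,b)\db\da
\]
inside any $\int_0^L\int_\R\ds\d\rho$ integral, thereby producing the $\iint_{(s,\rho)\in\Omega(a,b)} G^{-1}(\rho)\db\da$ weighting that appears in \eqref{eq:ritau} and \eqref{eq:vitau}. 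The second is the elementary averaging identity coming from $L$-periodicity: for any $L$-periodic $h$ on $[0,L)^k$ and any centered window $C(z)$,
\[
\int_{[0,L)^k} h(y)\dy = \frac{1}{|C|}\int_{[0,L)^k}\int_{C(z)} h(y)\dy\dz,
\]
which converts any global integral into an integral of its local average over $z\in[0,L)^d$.

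First I would show that for each $i$, $-\Mi{0}{L}{i}+\Gcal{i} = R_{i,\tau,\eps}(u,x_i^\perp,[0,L))$. Starting from the definition \eqref{def:Gcal} of $\Gcal{i}$ and substituting $|\rho|\Mi{0}{L}{i} = \int_0^L \Mi{s}{s+\rho}{i}\ds$ (the first equality in \eqref{eq:partpos0}, a direct consequence of $L$-periodicity of $u$) into its first term, one rewrites
\[
\Gcal{i} = \int_\R \int_0^L \widehat K_\tau(\rho) \bigl[\Mi{s}{s+\rho}{i} - (u_{x_i^\perp}(s)-u_{x_i^\perp}(s+\rho))^2\bigr]\ds\d\rho,
\]
and applying the disintegration identity to the $\ds$ integral produces exactly the integrand defining $R_{i,\tau,\eps}(u,x_i^\perp,[0,L))$ in \eqref{eq:ritau}, with the leading $-\Mi{0}{L}{i}$ already accounted for. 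Next, starting from \eqref{eq:itl} and using translation invariance of the $\dx$ integral under $L$-periodicity, the integrand of $\mathcal I^i_{\tau,L}(u)$ is invariant under $\zeta_i \to -\zeta_i$ combined with the shift $x \to x+\zeta_i e_i$, so one may extend $\zeta_i$ to all of $\R$ at the cost of a factor $1/2$. Setting $s = x_i$, $\rho = \zeta_i$, $y_i^\perp = \zeta_i^\perp$ gives
\[
\mathcal I^i_{\tau,L}(u) = \frac{1}{2d}\int_{[0,L)^{d-1}}\int_0^L\int_\R\int_{\R^{d-1}} f_u(x_i^\perp,s,y_i^\perp,s+\rho)\, K_\tau(\rho e_i + y_i^\perp)\,\dy_i^\perp\d\rho\ds\dx_i^\perp,
\]
and this single expression admits the two representations matching the second and third summands in \eqref{eq:3.9}: applying the disintegration identity to the $\ds$ integration produces the integral of $V_{i,\tau}(u,x_i^\perp,[0,L))$ over $x_i^\perp$ as in \eqref{eq:vitau}, whereas the change of variables $y = x+\zeta$ directly produces the integral of $W_{i,\tau}(u,x)$ over $[0,L)^d$. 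Finally, $\mathcal W_{\tau,L,\eps}$ is distributed uniformly over the $d$ directions with coefficient $1/d$, producing the fourth summand.

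The last step is to apply the averaging identity to localize every remaining global integral on the cube $Q_l(z)$: to the $\dx_i^\perp$ integration with window $Q_l^\perp(z_i^\perp)$ for the $R$ and $V$ pieces; to the $\ds$ integration inside $R$ and $V$ with window $(z_i-l/2,z_i+l/2)$ (this is precisely the role played by the interval $I$ in \eqref{eq:ritau} and \eqref{eq:vitau}); and to the $\dx$ integral with window $Q_l(z)$ for the $W$ and $\mathcal W$ pieces. Assembling the four contributions for each $i$ yields $\bar F_{i,\tau,\eps}(u,Q_l(z))$ in the form \eqref{eq:3.9}, and summing over $i$ and integrating in $z$ gives the claimed inequality.

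The main technical obstacle is not conceptual but lies in the bookkeeping of the second step, where the single quantity $\mathcal I^i_{\tau,L}$ is split between the two representations $V_{i,\tau}$ and $W_{i,\tau}$: one must verify that with the exact normalizations prescribed in \eqref{eq:3.9} the two summands reproduce $\mathcal I^i_{\tau,L}$ (and not twice of it), and that the factors coming from the $\zeta_i \to -\zeta_i$ symmetry extension, from the $1/|C|$ normalization of each averaging step, and from the mismatch between the half-side $l/2$ of the one-dimensional interval and the cube $Q_l^\perp$ all combine consistently. Once that bookkeeping is in place, the disintegration identity and the periodic averaging identity make the remainder essentially a Fubini exercise.
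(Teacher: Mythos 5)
Your proposal follows essentially the same route as the paper's proof: start from the lower bound \eqref{eq:3.13bis} of \cite{DKR}, use $L$-periodicity and the disintegration identity \eqref{eq:gom}--\eqref{eq:disint} to rewrite $\Gcal{i}$ and $\mathcal I^i_{\tau,L}$ in the $R_{i,\tau,\eps}$, $V_{i,\tau}$, $W_{i,\tau}$ forms, and finish by cube-averaging via periodicity. The one bookkeeping issue you correctly flag---how $\mathcal I^i_{\tau,L}$ is shared between $V_{i,\tau}$ and $W_{i,\tau}$ without double-counting---is handled in the paper by allocating one half of $\mathcal I^i_{\tau,L}$ to the disintegrated sliced form (giving $V_{i,\tau}$) and the other half to $W_{i,\tau}$, as in \eqref{eq:Idec}; note that the coefficient in front of $W_{i,\tau}$ differs between the statement \eqref{eq:3.9} and the proof's \eqref{eq:fbari}, but since $W_{i,\tau}\geq 0$ and this term is simply dropped in every later estimate, the precise constant is immaterial for the remainder of the argument.
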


\begin{proof}
	\textbf{Step 1}
	
Recall the lower bound \eqref{eq:3.13} obtained in \cite{DKR}[Proposition 3.1], namely 
\begin{align}
\label{eq:3.13bis}
\Fcalt(u)\geq&\frac{1}{L^d} \sum_{i=1}^d\Bigl\{\int_{[0,L)^{d-1}}\Bigl[-\Mi{0}{L}{i}+\Gcal{i}\Bigr]\dx_i^{\perp}+\mathcal I^i_{\tau,L}(u)\Bigr\}\notag\\
&+\frac{1}{L^d}\mathcal W_{\tau,L,\eps}(u),\qquad
\end{align}
where $\overline{\mathcal M}^i_{\at}$, $\overline{\mathcal G}^i_{\at,\tau}$, $\mathcal I^i_{\tau,L}$ and $\mathcal W_{\tau,L,\eps}$ are defined in \eqref{eq:3.2}-\eqref{def:Wcal}.

Using the periodicity of $u$ w.r.t. $[0,L)^d$, as recalled in \eqref{eq:partpos0} one has that
\begin{equation}
|\rho|\Mi{0}{L}{i}=\int_0^L\Mi{s}{s+\rho}{i}\ds,
\end{equation} 
where by convention from now onwards
\begin{equation}
[s,s+\rho)=\left\{\begin{aligned}
&[s,s+\rho) && &\text{if $\rho>0$},\\
&[s+\rho,s) && &\text{if $\rho<0$}.
\end{aligned}\right.
\end{equation}
Therefore 
\begin{equation}\label{eq:gper}
\overline{\mathcal{G}}^i_{\at,\tau}(u,x_i^\perp,[0,L))=\int_0^L\int_{\R}\Bigl[\Mi{s}{s+\rho}{i}-(u(s)-u(s+\rho))^2\Bigr]\widehat K_\tau(\rho)\d\rho\ds.
\end{equation}
On the other hand, recall that
\begin{equation}\label{eq:islice}
\mathcal I^i_{\tau,L}(u)=\int_{[0,L)^{d-1}}\overline{\mathcal I}^i_{\tau}(u,x_i^\perp,[0,L))\dx_i^\perp,
\end{equation}
where
\begin{align}
\label{eq:slicingMathCalIbis}
\overline{\mathcal I}^i_{\tau}(u,x_i^\perp,[0,L)) &:=\frac{1}{d}\int_0^L\int_{\{\rho>0\}}\int_{\R^{d-1}}f_u(x_i^\perp,s,y_i^\perp,s+\rho)K_\tau(\rho e_i+(y_i^\perp-x_{i}^\perp))\dy_i^\perp\d\rho\ds
\end{align}
and $f_u$ as in \eqref{eq:fu}.

Hence, decomposing half of the term  $\mathcal{I}^i_{\tau,L}$ as in \eqref{eq:islice}-\eqref{eq:slicingMathCalIbis} and leaving half of it as it is, it follows immediately that
\begin{align}
\mathcal I^i_{\tau,L}(u)&=\int_{[0,L)^{d-1}}\Bigl[\frac{1}{2d}\int_0^L\int_{\{\rho>0\}}\int_{\R^{d-1}}f_u(x_i^\perp,s,y_i^\perp,s+\rho)K_\tau(\rho e_i+(y_i^\perp-x_{i}^\perp))\dy_i^\perp\d\rho\ds\Bigr]\dx_i^\perp\notag\\
&+\frac{1}{2d}\int_{[0,L)^d}W_{i,\tau}(u,x)\dx.\label{eq:Idec}
\end{align}

Hence, by \eqref{eq:3.13bis}, \eqref{eq:gper} and  \eqref{eq:Idec} one has that  
\begin{align}
\Fcalt(u)\geq&\frac{1}{L^d} \sum_{i=1}^d\int_{[0,L)^{d-1}}\Bigl\{-\Mi{0}{L}{i}\notag\\
&+\int_0^L\int_{\R}\Bigl[\Mi{s}{s+\rho}{i}-(u(s)-u(s+\rho))^2\Bigr]\widehat K_\tau(\rho)\d\rho\ds\notag\\
&+\frac{1}{2d}\int_0^L\int_{\{\rho>0\}}\int_{\R^{d-1}}f_u(x_i^\perp,s,y_i^\perp,s+\rho)K_\tau(\rho e_i+(y_i^\perp-x_{i}^\perp))\dy_i^\perp\d\rho\ds\Bigr\}\dx_i^\perp\notag\\
&+\frac{1}{2dL^d}\sum_{i=1}^d\int_{[0,L)^d}W_{i,\tau}(u,x)\dx\notag\\
&+\frac{1}{L^d}\mathcal W_{\tau,L,\eps}(u).\label{eq:lowboundL}
\end{align}

\textbf{Step 2} By \eqref{eq:gsrho}, \eqref{eq:gom} and Fubini Theorem, for any function $A(s,\rho)$ with $A(\cdot,\rho)$ $L$-periodic one has that
\begin{align}\label{eq:disint}
\int_0^L\int_{\R}A(s,\rho)\ds\d\rho&=\int_0^L\int_{\R}A(s,\rho)G^{-1}(\rho)G(\rho)\d\rho\ds\notag\\
&=\int_0^L\int_{\R}A(s,\rho)G^{-1}(\rho)\int_s^{s+L}\int_{\R}\chi_{\{(a,b):(s,\rho)\in\Omega(a,b)\}}(a,b)\db\da\d\rho\ds\notag\\
&=\int_0^L\int_{\R}A(s,\rho)G^{-1}(\rho)\int_0^{L}\int_{\R}\chi_{\{(a,b):(0,\rho)\in\Omega(a-s,b-s)\}}(a,b)\db\da\d\rho\ds\notag\\
&=\int_0^L\int_{\R}\int_0^{L}\int_{\R}A(s,\rho)G^{-1}(\rho)\chi_{\{(a,b):(0,\rho)\in\Omega(a-s,b-s)\}}(a,b)\d\rho\ds\db\da\notag\\
&\int_0^L\int_{\R}\iint_{(s,\rho)\in\Omega(a,b)}A(s,\rho)G^{-1}(\rho)\d\rho\ds\db\da.
\end{align}

Applying  \eqref{eq:disint} to the second and the third term of the r.h.s. of  \eqref{eq:lowboundL} and recalling the definitions of $R_{i,\tau,\eps}$ and $V_{i,\tau}$ given in \eqref{eq:ritau} and \eqref{eq:vitau}, one obtains 

\begin{align}
\Fcal_{\tau,L,\eps}(u)\geq&\frac{1}{L^d} \sum_{i=1}^d\Bigl\{\int_{[0,L)^{d-1}}\Bigl\{R_{i,\tau,\eps}(u,x_i^\perp,[0,L))+V_{i,\tau}(u,x_i^\perp,[0,L))\Bigr\}\dx_i^\perp\notag\\
&+\frac{1}{2dL^d}\int_{[0,L)^d}W_{i,\tau}(u,x)\dx\Bigr\}\notag\\
&+\frac{1}{L^d}\mathcal W_{\tau,L,\eps}(u).
\end{align}
 
 \textbf{Step 3}
By periodicity of $u$ w.r.t. $[0,L)^d$, as in \cite{DR}, one has that 
\[
\Fcalt(u)\geq\frac{1}{L^d}\int_{[0,L)^d}\bar F_{\tau,\eps}(u,Q_l(z))\dz
\]
with
\[
\bar F_{\tau,\eps}(u,Q_l(z)):=\sum_{i=1}^d\bar F_{i,\tau,\eps}(u,Q_l(z))
\]
and 
\begin{align}
\bar F_{i,\tau,\eps}(u,Q_l(z))&=\frac{1}{l^d}\int_{Q_l^\perp(z_i^\perp)}\Bigl\{R_{i,\tau,\eps}(u,x_i^\perp,Q_l^i(z_i))+V_{i,\tau}(u,x_i^\perp,Q_l^i(z_i))\Bigr\}dx_i^\perp\notag\\
&+\frac{1}{2dl^d}\int_{Q_l(z)}W_{i,\tau}(u,x)\dx\notag\\
&+\frac{3(C_\tau-1)}{dl^d}\int_{\{\nabla u=0\}\cap Q_l(z)}\frac{W(u(x))}{\at}\dx,\label{eq:fbari}
\end{align}
where 
\begin{equation}
	C_\tau=\int_{\R}|\zeta_i|\widehat K_\tau(\zeta_i)\d\zeta_i.
\end{equation}

From Theorem \ref{thm:gammaconv}, Theorem \ref{Thm:DR} and Definition \ref{def:defDEta} one has the following 
\begin{proposition}[Local Rigidity] %---{{{
	\label{lemma:local_rigidity}
	For every $M > 1,l,\sigma > 0$, there exist ${\hat\tau},{\hat\eta},\hat\eps >0$ %($\bar{\tau}$ and $\bar{\eta}$ independent of $L$)
	such that whenever $\tau< \hat{\tau}$, $\eps<\hat\eps$  and $\bar F_{\tau,\eps}(u,Q_{l}(z)) < M$ for some $z\in [0,L)^d$ and $u\in W^{1,2}_{\mathrm{loc}}(\R^d;[0,1])$ $[0,L)^d$-periodic, with $L>l$, then it holds $D_{\eta}(u,Q_{l}(z))\leq\sigma$ for every $\eta < \hat{\eta}$. Moreover $\hat{\eta}$ can be chosen independent on $\sigma$.  Notice that ${\hat\tau}$, $\hat\eps$ and ${\hat\eta}$ are independent of $L$.
\end{proposition} %---}}}

\end{proof}

\section{Preliminary lemmas}
This section contains all the preliminary lemmas which are needed to prove Theorem \ref{thm:main}. Such lemmas  provide a series of estimates which show an increase of the localized energy when $u$ deviates in different ways from being one-dimensional.  This section, together with Section \ref{sec:dec}, contains the main novelties w.r.t. the study of the corresponding sharp interface problem, while Section \ref{sec:mainthm} recalls the main underlying strategy and how the lemmas of this section enter in the proof of Theorem \ref{thm:main} and is very similar to \cite{DR}[Section 7].  

Let us start recalling from \cite{DKR} the following facts:
\begin{lemma}\label{lemma:pos0}
	For all $i\in\{1,\dots,d\}$, $x_i^\perp\in[0,L)^{d-1}$, $s\in[0,L)$, $\rho\in\R$
	\begin{equation}\label{eq:pos1}
	\Mi{s}{s+\rho}{i}-(u(s)-u(s+\rho))^2\geq0
	\end{equation}
	and if $|u(s)-u(s+\rho)|\leq1-\delta$
	\begin{equation}\label{eq:pos2}
	\frac{1}{1+2\delta}\Mi{s}{s+\rho}{i}-(u(s)-u(s+\rho))^2\geq0.
	\end{equation}
\end{lemma}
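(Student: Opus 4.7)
The strategy is to combine two ingredients already developed in the excerpt: a pointwise Modica--Mortola-type bound on the integrand of $\overline{\mathcal M}^i_{\at}$ and the quantitative convexity-style inequality \eqref{eq:omegaab} for $\omega$. Both inequalities in the lemma will then follow by purely algebraic manipulation.

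\textbf{Step 1: pointwise AM--GM bound.} I would first apply the arithmetic--geometric mean inequality to the integrand of the functional $\overline{\mathcal M}^i_{\at}(u,x_i^\perp,[s,s+\rho))$ defined in \eqref{eq:3.2}. Setting
\[
A(\xi):=|\partial_iu_{x_i^\perp}(\xi)|\,\|\nabla u(\xi e_i+x_i^\perp)\|_1,\qquad B(\xi):=W(u_{x_i^\perp}(\xi))\,\frac{|\partial_iu_{x_i^\perp}(\xi)|}{\|\nabla u(\xi e_i+x_i^\perp)\|_1},
\]
the inequality $3\at A(\xi)+\tfrac{3}{\at}B(\xi)\geq 6\sqrt{A(\xi)B(\xi)}=6\sqrt{W(u_{x_i^\perp}(\xi))}\,|\partial_iu_{x_i^\perp}(\xi)|$ holds pointwise on $\{\nabla u(\cdot e_i+x_i^\perp)\neq 0\}$, and the right-hand side vanishes on the complement since $|\partial_iu_{x_i^\perp}|\leq\|\nabla u\|_1$. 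Hence, integrating over $[s,s+\rho)$ and using the chain rule together with the definition $\omega'(t)=6\sqrt{W(t)}$ from \eqref{eq:stimaOmega}, I would conclude
\[
\overline{\mathcal M}^i_{\at}(u,x_i^\perp,[s,s+\rho))\geq\int_{[s,s+\rho)}\Big|\tfrac{d}{d\xi}\omega(u_{x_i^\perp}(\xi))\Big|\,\d\xi\geq\bigl|\omega(u_{x_i^\perp}(s+\rho))-\omega(u_{x_i^\perp}(s))\bigr|.
\]
This is the pointwise version of the bound already established in \eqref{eq:partpos0} (which integrates the same estimate over $s$ after using periodicity).

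\textbf{Step 2: convert into the claimed $L^2$ bound via \eqref{eq:omegaab}.} Set $a:=\max\{u_{x_i^\perp}(s),u_{x_i^\perp}(s+\rho)\}$ and $b:=\min\{u_{x_i^\perp}(s),u_{x_i^\perp}(s+\rho)\}$, so that $a,b\in[0,1]$ and $a-b=|u(s)-u(s+\rho)|\leq 1$. By \eqref{eq:omegaab},
\[
|\omega(a)-\omega(b)|=\Bigl(\tfrac{6b(1-a)}{a-b}+3-2(a-b)\Bigr)(a-b)^2\geq(3-2|u(s)-u(s+\rho)|)\,(u(s)-u(s+\rho))^2,
\]
since $1-a=1-b-(a-b)\geq 0$. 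For \eqref{eq:pos1} this immediately gives the factor $3-2|u(s)-u(s+\rho)|\geq 1$ and hence $\overline{\mathcal M}^i_{\at}(u,x_i^\perp,[s,s+\rho))\geq (u(s)-u(s+\rho))^2$. For \eqref{eq:pos2}, if $|u(s)-u(s+\rho)|\leq 1-\delta$ the same coefficient is at least $3-2(1-\delta)=1+2\delta$, which rearranges to the claimed inequality.

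\textbf{Main obstacle.} There is no serious obstacle: everything reduces to AM--GM plus the elementary identity \eqref{eq:omegaab}. The only point requiring mild care is making sure that the AM--GM step is performed on the integrand rather than after integration (so that one obtains a total-variation bound for $\omega\circ u_{x_i^\perp}$ instead of the cruder bound one would get from Cauchy--Schwarz on the whole integral), and that the restriction to $\{\nabla u\neq 0\}$ in \eqref{eq:3.2} is harmless because $\partial_i u_{x_i^\perp}$ vanishes a.e.\ outside that set.
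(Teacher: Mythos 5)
Your proof is correct and follows essentially the same path as the paper's own (very terse) argument: the paper invokes precisely the two facts you develop, namely the total-variation bound $\overline{\mathcal{M}}^i_{\at}(u,x_i^\perp,[s,s+\rho))\geq|\omega(u(s))-\omega(u(s+\rho))|$ obtained via the AM--GM (``$a^2+b^2\geq 2ab$'') estimate on the integrand, together with the quantitative bound $|\omega(a)-\omega(b)|\geq(3-2|a-b|)|a-b|^2$ from~\eqref{eq:omegaab}. Your Step~1 simply spells out the AM--GM computation that the paper leaves implicit, and your Step~2 is the same algebraic rearrangement.
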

The proof, follows immediately from the fact that $\Mi{s}{s+\rho}{i}\geq|\omega(u(s))-\omega(u(s+\rho))|$, where $\omega(t)=3t^2-2t^3$ is the optimal energy function for the Modica-Mortola term and on the fact that $|\omega(a)-\omega(b)|\geq (3-2|a-b|)|a-b|^2$ (see \eqref{eq:partpos0} and \eqref{eq:stimaOmega}).

In the following lemma, penalization of functions $u$ whose Modica-Mortola term is large on a small interval is shown. In particular, since the minimal energy is negative whenever $\tau$ and $\eps$ are sufficiently small and since $V_{i,\tau}$ and $W_{i,\tau}$ in \eqref{eq:3.9} are nonnegative, whenever we show that under some conditions also the third term contributing to the energy, i.e. $R_{i,\tau,\eps}$, is positive, then the corresponding configuration is surely not optimal. 

\begin{lemma}\label{lemma:rpos1}
	Let $\Upsilon>1$.  Then there exists $C>0$,  $\eta_0=\eta_0(\Upsilon)$ and $\tau_0>0$ with ${\tau_0}^{1/\beta}\leq\eta_0$ such that for every $\tau\leq{\tau_0}$, $k\geq1$ and for all $x_i^\perp\in[0,L)^d$, $\bar a\in[0,L)$ such that $\Mi{\bar a-\eta_0}{\bar a+\eta_0}{i}\geq k\Upsilon$, then $R_{i,\tau,\eps}(u,x_i^\perp,(\bar a-\eta_0/2,\bar a+\eta_0/2))\geq k\Upsilon>0$.
\end{lemma}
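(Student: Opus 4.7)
The plan is to split $R_{i,\tau,\eps}(u,x_i^\perp,I) = -\overline{\mathcal M}^i_{\at}(u,x_i^\perp,I) + P$, where $I=(\bar a-\eta_0/2,\bar a+\eta_0/2)$ and $P$ denotes the quadruple integral on the right-hand side of~\eqref{eq:ritau}. By~\eqref{eq:pos1} the integrand of $P$ is pointwise nonnegative. Setting $M := \overline{\mathcal M}^i_{\at}(u,x_i^\perp,I)$ and $J := [\bar a-\eta_0,\bar a+\eta_0)$, monotonicity of $\overline{\mathcal M}^i_{\at}(u,x_i^\perp,\cdot)$ in the integration interval together with the hypothesis gives $\overline{\mathcal M}^i_{\at}(u,x_i^\perp,[s,s+\rho)) \geq \overline{\mathcal M}^i_{\at}(u,x_i^\perp,J) \geq \max(M,k\Upsilon)$ for every $(s,\rho)$ with $[s,s+\rho] \supset J$. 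The target thus reduces to proving $P \geq M + k\Upsilon$.

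The first step is to swap the order of integration in $P$. For fixed $(s,\rho)$ with $\rho>0$, the slice $\{(a,b)\in I\times\R : (s,\rho)\in\Omega(a,b)\}$ has Lebesgue measure $|I\cap[s,s+\rho]|\cdot\rho$, so using $G(\rho)=|\rho|\min\{|\rho|,L\}$ from~\eqref{eq:gsrho} one rewrites
\begin{equation*}
 P = \int_{0}^{L}\int_{\R} \frac{|I\cap\operatorname{int}(s,s+\rho)|}{\min\{|\rho|,L\}}\bigl(\overline{\mathcal M}^i_{\at}(u,x_i^\perp,[s,s+\rho)) - (u(s)-u(s+\rho))^2\bigr)\widehat K_\tau(\rho)\d\rho\ds,
\end{equation*}
where $\operatorname{int}(s,s+\rho)$ is the interval with endpoints $s$ and $s+\rho$. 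I then restrict the integration to $\rho \in [2\eta_0, 3\eta_0]$ and $s \in [\bar a+\eta_0-\rho,\, \bar a-\eta_0]$, so that $[s,s+\rho]\supset J$ and $|I\cap [s,s+\rho]|=\eta_0$. The Modica--Mortola difference in the integrand is then at least $\max(M,k\Upsilon)-1$ (using $(u(s)-u(s+\rho))^2\leq 1$), while the kernel asymptotics $\widehat K_\tau(\rho) \asymp (\rho+\tau^{1/\beta})^{-(\beta+2)}$ combined with $\tau^{1/\beta}\leq \eta_0$ yield $\widehat K_\tau(\rho)\gtrsim \eta_0^{-(\beta+2)}$. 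A direct computation of $\int_{2\eta_0}^{3\eta_0}\frac{\rho-2\eta_0}{\rho}\d\rho \gtrsim \eta_0$ then gives
\begin{equation*}
 P \geq C_1\, \eta_0^{-\beta}\bigl(\max(M,k\Upsilon)-1\bigr)
\end{equation*}
for a constant $C_1$ depending only on $p$ and $d$.

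Setting $A := C_1\eta_0^{-\beta}$, the inequality becomes $R_{i,\tau,\eps}(u,x_i^\perp,I)\geq -M + A(\max(M,k\Upsilon)-1)$. A short case analysis, distinguishing whether $M\leq k\Upsilon$ or $M > k\Upsilon$, shows the right-hand side is bounded below by $k\Upsilon$ provided $A\geq 2+2/(k\Upsilon-1)$; since $k\geq 1$ and $\Upsilon>1$, the stronger requirement $A\geq 2+2/(\Upsilon-1)$ suffices and fixes a threshold $\eta_0=\eta_0(\Upsilon)>0$ independent of $k$, $L$, and $u$. Then $\tau_0:=\eta_0^{\beta}$ does the job. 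The main obstacle is the regime in which $M$ is very large: if all of the Modica--Mortola mass of $J$ concentrates inside the inner half $I$, the negative contribution $-M$ becomes arbitrarily large in absolute value; this is however compensated by the fact that the positive part $P$ inherits the same scaling in $M$ through the factor $\overline{\mathcal M}^i_{\at}(u,x_i^\perp,[s,s+\rho))$, with a multiplicative coefficient $A$ that can be made as large as needed thanks to the kernel singularity $\widehat K_\tau(\rho)\sim \eta_0^{-(\beta+2)}$ at the scale $\rho\sim\eta_0$.
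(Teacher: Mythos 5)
Your proof is correct and follows essentially the same strategy as the paper: you reduce to the positive quadruple integral $P$, restrict the $(s,\rho)$--integration to the scale $|\rho|\sim\eta_0$ with $[s,s+\rho]\supset[\bar a-\eta_0,\bar a+\eta_0]$, exploit the kernel singularity $\widehat K_\tau(\rho)\sim\eta_0^{-(\beta+2)}$ to produce a coefficient $\sim\eta_0^{-\beta}$ multiplying the Modica--Mortola mass, and then close by taking $\eta_0$ small. The only cosmetic difference is in the final algebra: the paper uses the inequality $\Mi{s}{s+\rho}{i}-(u(s)-u(s+\rho))^2\geq\frac{\Upsilon-1}{\Upsilon}\Mi{s}{s+\rho}{i}$ together with $\Mi{\bar a-\eta_0/2}{\bar a+\eta_0/2}{i}\leq\Mi{\bar a-\eta_0}{\bar a+\eta_0}{i}$ to factor out $\Mi{\bar a-\eta_0}{\bar a+\eta_0}{i}\geq k\Upsilon$ and avoid a case distinction, whereas you bound $(u(s)-u(s+\rho))^2\leq1$ directly and handle the two regimes $M\leq k\Upsilon$ and $M>k\Upsilon$ separately; both routes yield a threshold on $\eta_0$ of the same nature.
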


\begin{proof}
	First of all, observe that for any $\eta_0$ such that $\Mi{\bar a-\eta_0}{\bar a+\eta_0}{i}\geq k\Upsilon$ and for all  $s,s+\rho\in\Omega(\bar a-\eta_0,\bar a+\eta_0) $
	\begin{align}
	\Mi{s}{s+\rho}{i}-(u(s)-u(s+\rho))^2&=\frac{\Upsilon-1}{\Upsilon}	\Mi{s}{s+\rho}{i}\notag\\
	&+\frac{1}{\Upsilon}	\Mi{s}{s+\rho}{i}-(u(s)-u(s+\rho))^2\notag\\
	&\geq \frac{\Upsilon-1}{\Upsilon}	\Mi{s}{s+\rho}{i},\label{eq:upsm}
	\end{align}
	where in the last inequality have used that $\Mi{s}{s+\rho}{i}\geq\Mi{\bar a-\eta_0}{\bar a+\eta_0}{i}\geq\Upsilon$ and $|u(s)-u(s+\rho)|\leq1$. 
	Recalling \eqref{eq:pos1} and then using Fubini Theorem, one  has that
	\begin{align}
	R_{i,\tau,\eps}&(u,x_i^\perp,(\bar a-\eta_0/2,\bar a+\eta_0/2))=-\Mi{\bar a-\eta_0/2}{\bar a+\eta_0/2}{i}\notag\\
	&+\int_{\bar a-\eta_0/2}^{\bar a+\eta_0/2}\int_{\R}\iint_{(s,\rho)\in\Omega(a,b)}G^{-1}(\rho)\Bigl(\Mi{s}{s+\rho}{i}-(u(s)-u(s+\rho))^2\Bigr)\widehat K_\tau(\rho)\Bigr]\ds\d\rho\db\da\notag\\
	&\geq-\Mi{\bar a-\eta_0/2}{\bar a+\eta_0/2}{i}+\int_{\bar a-\eta_0/2}^{\bar a+\eta_0/2}\int_{\R}\iint_{(s,\rho)\in\Omega(a,b)\cap \Omega (\bar a-\eta_0,\bar a+\eta_0)}G^{-1}(\rho)\widehat K_\tau(\rho)\cdot\notag\\
	&\cdot \Bigl(\Mi{s}{s+\rho}{i}-(u(s)-u(s+\rho))^2\Bigr)\ds\d\rho\db\da\notag\\
	&\geq-\Mi{\bar a-\eta_0/2}{\bar a+\eta_0/2}{i}+\iint_{(s,\rho)\in\Omega (\bar a-\eta_0,\bar a+\eta_0),\{|\rho|\leq4\eta_0\}}G^{-1}(\rho)\widehat K_\tau(\rho)\cdot\notag\\
	&\cdot \Bigl(\Mi{s}{s+\rho}{i}-(u(s)-u(s+\rho))^2\Bigr)\int_{\bar a-\eta_0/2}^{\bar a+\eta_0/2}\int_{\R}\chi_{\{(a,b):(s,\rho)\in\Omega(a,b)\}}(a,b)\da\db\ds\d\rho.\label{eq:Rineq1}
	\end{align}
	Now observe that by direct computation, whenever $(s,\rho)\in\Omega(\bar a-\eta_0,\bar a+\eta_0)$,
	\[
	\int_{\bar a-\eta_0/2}^{\bar a+\eta_0/2}\int_{\R}\chi_{\{(a,b):(s,\rho)\in\Omega(a,b)\}}(a,b)\da\db=|\rho|\eta_0,
	\] 
	hence (recalling the definition of $G(\rho)$ in \eqref{eq:gsrho} and the fact that $|\rho|\leq4\eta_0$ with $4\eta_0\ll L$)
	\begin{equation}\label{eq:g-1ineq}
	G^{-1}(\rho)	\int_{\bar a-\eta_0/2}^{\bar a+\eta_0/2}\int_{\R}\chi_{\{(a,b):(s,\rho)\in\Omega(a,b)\}}(a,b)\da\db=\frac{\eta_0}{|\rho|}\geq\frac14.
	\end{equation}
	Inserting \eqref{eq:g-1ineq} in \eqref{eq:Rineq1}, one obtains
	\begin{align*}
		R_{i,\tau,\eps}&(u,x_i^\perp,(\bar a-\eta_0/2,\bar a+\eta_0/2))\geq-\Mi{\bar a-\eta_0/2}{\bar a+\eta_0/2}{i}\notag\\
		&+\frac14\iint_{(s,\rho)\in\Omega (\bar a-\eta_0,\bar a+\eta_0),\{|\rho|\leq4\eta_0\}}\Bigl(\Mi{s}{s+\rho}{i}-(u(s)-u(s+\rho))^2\Bigr)\widehat K_\tau(\rho)\ds\d\rho.
	\end{align*}
	Now since $(s,\rho)\in\Omega (\bar a-\eta_0,\bar a+\eta_0)$, we can use \eqref{eq:upsm} and get
	\begin{align}
		R_{i,\tau,\eps}&(u,x_i^\perp,(\bar a-\eta_0/2,\bar a+\eta_0/2))\geq-\Mi{\bar a-\eta_0/2}{\bar a+\eta_0/2}{i}\notag\\
		&+\frac{\Upsilon-1}{4\Upsilon}\Mi{\bar a-\eta_0}{\bar a+\eta_0}{i}\iint_{(s,\rho)\in\Omega (\bar a-\eta_0,\bar a+\eta_0),\{|\rho|\leq4\eta_0\}}\frac{1}{(|\rho|+\tau^{1/\beta})^q}\d\rho\ds\notag\\
		&\geq \Mi{\bar a-\eta_0}{\bar a+\eta_0}{i}\Bigl(-1+\frac{\Upsilon-1}{4\Upsilon}2\eta_0\int_{3\eta_0\leq|\rho|\leq4\eta_0}\frac{1}{(|\rho|+\tau^{1/\beta})^q}\d\rho\Bigr)\notag\\
		&\geq k\Bigl(-\Upsilon+\frac{\bar C(\Upsilon-1)}{\eta_0^{q-2}}\Bigr)\notag\\
		&\geq k\Upsilon\label{eq:estlemmapos1}
	\end{align}
	 where the last two inequalities hold provided $\tau^{1/\beta}\leq\eta_0$ and $\eta_0$ is sufficiently small depending on $\Upsilon$.
	
\end{proof}
As a consequence, the following holds. 
\begin{corollary}\label{cor:pos0}
	Let $\Upsilon>1$ and $\eta_0,\tau_0$ as in Lemma \ref{lemma:rpos1}. Then, for all intervals  $I\subset[0,L)$ such that $R_{i,\tau,\eps}(u,x_i^\perp,I)<0$ and for all $\tau\leq\tau_0$, it holds 
	\begin{equation}\label{eq:mineq}
	\overline{\mathcal M}^i_{\at}(u,x_i^\perp,I)\leq{2\Upsilon}\max\Bigl\{\frac{|I|}{\eta_0},1\Bigr\}.
	\end{equation}
	
\end{corollary}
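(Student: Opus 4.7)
I would prove \Cref{cor:pos0} by partitioning $I$ into sub-intervals of length approximately $\eta_0$, applying \Cref{lemma:rpos1} on each, and combining via the additivity of $R_{i,\tau,\eps}(u,x_i^\perp,\cdot)$ together with the hypothesis $R_{i,\tau,\eps}(u,x_i^\perp,I)<0$.

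Two structural observations are crucial. First, $R_{i,\tau,\eps}(u,x_i^\perp,\cdot)$ is additive over disjoint unions of sub-intervals, since both $-\overline{\mathcal M}^i_{\at}(u,x_i^\perp,\cdot)$ and the quadruple integral in \eqref{eq:ritau} are additive in the domain $I$ through the outermost variable $a$. Second, by the pointwise positivity \eqref{eq:pos1}, the quadruple integral term in \eqref{eq:ritau} is nonnegative, yielding the trivial bound $R_{i,\tau,\eps}(u,x_i^\perp,J)\geq -\overline{\mathcal M}^i_{\at}(u,x_i^\perp,J)$ for every sub-interval $J\subset I$.

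Assume first $|I|=N\eta_0$ for an integer $N$ and partition $I$ into pieces $J_1,\dots,J_N$ of length $\eta_0$ with midpoints $\bar a_j$; set $I_j:=(\bar a_j-\eta_0,\bar a_j+\eta_0)$. Let $S:=\{j:\overline{\mathcal M}^i_{\at}(u,x_i^\perp,I_j)\geq\Upsilon\}$. For $j\in S$, \Cref{lemma:rpos1} applied with $k_j:=\lfloor\overline{\mathcal M}^i_{\at}(u,x_i^\perp,I_j)/\Upsilon\rfloor\geq 1$ yields $R_{i,\tau,\eps}(u,x_i^\perp,J_j)\geq k_j\Upsilon>\overline{\mathcal M}^i_{\at}(u,x_i^\perp,I_j)-\Upsilon$, while for $j\in S^c$ the trivial bound together with $\overline{\mathcal M}^i_{\at}(u,x_i^\perp,J_j)\leq \overline{\mathcal M}^i_{\at}(u,x_i^\perp,I_j)<\Upsilon$ gives $R_{i,\tau,\eps}(u,x_i^\perp,J_j)>-\Upsilon$. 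Summing over $j$ and invoking additivity with $R_{i,\tau,\eps}(u,x_i^\perp,I)<0$ produces $\sum_{j\in S}\overline{\mathcal M}^i_{\at}(u,x_i^\perp,I_j)<N\Upsilon$; combined with $\sum_{j\in S^c}\overline{\mathcal M}^i_{\at}(u,x_i^\perp,I_j)<|S^c|\Upsilon\leq N\Upsilon$ and the inclusion $I\subset\bigcup_j I_j$, this gives
\[
\overline{\mathcal M}^i_{\at}(u,x_i^\perp,I)\leq\sum_j\overline{\mathcal M}^i_{\at}(u,x_i^\perp,I_j)<2N\Upsilon=2\Upsilon\,|I|/\eta_0,
\]
which matches the claimed bound.

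The main delicacy, which I expect to be the principal technical obstacle, lies in the case $|I|$ not a multiple of $\eta_0$ -- and especially $|I|<\eta_0$ -- because \Cref{lemma:rpos1} directly supplies its conclusion only on a length-$\eta_0$ interval, which then strictly contains the corresponding partition piece. I would resolve this by covering $I$ with $N=\lceil|I|/\eta_0\rceil$ length-$\eta_0$ intervals $I_j$: each point of $I$ belongs to at most two of the enlarged intervals, which is precisely the source of the factor $2$ and of the $\max\{|I|/\eta_0,1\}$ in the statement. Alternatively, the rounding error can be absorbed into the slack already present in inequality \eqref{eq:estlemmapos1}, whose constant $k\bigl(-\Upsilon+\bar C(\Upsilon-1)/\eta_0^{\,q-2}\bigr)$ can be taken strictly larger than $k\Upsilon$ upon further shrinking $\eta_0$, thus closing the boundary-effect gap cleanly.
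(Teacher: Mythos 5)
Your argument for the main case (say $|I|=N\eta_0$, $N\in\N$) is correct and is essentially the same strategy the paper follows: split $I$ into $\eta_0$-scale pieces, apply Lemma~\ref{lemma:rpos1} on pieces where the Modica--Mortola content is large, use the trivial bound $R\geq-\overline{\mathcal M}$ elsewhere, sum by additivity of $R$ in its interval argument, and derive a contradiction with $R_{i,\tau,\eps}(u,x_i^\perp,I)<0$. The one genuine difference is the partition: the paper builds an adaptive stopping-time partition ($t_{j+1}$ is the first $t\geq t_j+\eta_0/2$ with $\overline{\mathcal M}^i_{\at}(u,x_i^\perp,[t_j,t))\geq\Upsilon$), which makes the per-piece value of $\overline{\mathcal M}$ explicit and bounds $\#A_0(\eta_0)\leq|I|/\eta_0$ automatically, whereas you fix the pieces to length $\eta_0$ and compensate with $\lfloor\overline{\mathcal M}(I_j)/\Upsilon\rfloor$. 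Both routes yield the factor $2\Upsilon|I|/\eta_0$.

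Where your plan does have a real gap is in the boundary case, which you rightly flag. Your first proposed fix, covering $I$ with $\lceil|I|/\eta_0\rceil$ length-$\eta_0$ intervals, does not close it: the identity $R_{i,\tau,\eps}(u,x_i^\perp,I)=\sum_j R_{i,\tau,\eps}(u,x_i^\perp,J_j)$ uses that the $J_j$ form a disjoint partition of $I$, and if the $J_j$ overlap or stick out past $\partial I$ this fails. Crucially, from $R_{i,\tau,\eps}(u,x_i^\perp,I)<0$ one cannot conclude $R_{i,\tau,\eps}(u,x_i^\perp,J)<0$ for an enlargement $J\supsetneq I$, so the $|I|<\eta_0$ sub-case cannot be absorbed into the $|I|=\eta_0$ case by a simple covering. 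Your second fix (absorbing the error into the slack in \eqref{eq:estlemmapos1}) is also not quite enough as stated: the proof of Lemma~\ref{lemma:rpos1} is calibrated so that the factor $G^{-1}(\rho)\int\int\chi=\eta_0/|\rho|\geq1/4$ in \eqref{eq:g-1ineq}, and if the inner interval has length $|I|<\eta_0$ this factor degrades to $|I|/|\rho|$, which can be much smaller; one would therefore have to re-run the lemma at scale $\eta_0'=|I|$, at which point the competing requirement $\tau^{1/\beta}\leq\eta_0'$ becomes a real constraint. To be fair, the paper itself dispatches $|I|<\eta_0$ in a single line by invoking Lemma~\ref{lemma:rpos1}, so your treatment is comparably terse; but if you want the boundary case to be rigorous you should give a direct argument (e.g., rescale the lemma to scale $|I|$ and handle $|I|<\tau^{1/\beta}$ separately) rather than rely on the covering heuristic.
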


\begin{proof}
	Let $I=(a,b)$ and let $a=t_0<t_1<\dots<t_N=b$ be a partition of $I$ into intervals $[t_j,t_{j+1}]$ such that the following holds:
	\begin{equation*}
	t_{j+1}=\inf\bigl\{t\in I, t\geq t_j+\eta_0/2: \overline {\mathcal M}^i_{\at}(u,x_i^\perp,[t_j,t))\geq\Upsilon\bigr\}.
	\end{equation*} W.l.o.g., assume that $|I|\geq\eta_0$. Otherwise by Lemma \ref{lemma:rpos1} one has that  \eqref{eq:mineq} holds. Let $s\geq1$ and let us define the following sets:
	\begin{align*}
	A_s(\eta_0)&:=\{j\in\{0,N-1\}:\,|t_{j+1}-t_j|\leq\eta_0\text{ and } \overline {\mathcal M}^i_{\at}(u,x_i^\perp,[t_j,t_{j+1}))= s\Upsilon\}\notag\\
		A_0(\eta_0)&:=\{j\in\{0,N-2\}:\,|t_{j+1}-t_j|\geq\eta_0\}.
	\end{align*}
	By definition, if $j\in A_0$ then $\overline {\mathcal M}^i_{\at}(u,x_i^\perp,[t_j,t_{j+1}))=\Upsilon$. 
	One has that
	\begin{align}\label{eq:mineq2}
		\overline{\mathcal M}^i_{\at}(u,x_i^\perp,I)\leq\sum_{j\in A_0(\eta_0)}\Upsilon+\sum_{s\geq1}\sum_{j\in A_s(\eta_0)}s\Upsilon=\Upsilon \#A_0(\eta_0)+\sum_{s\geq1}s\Upsilon \#A_s(\eta_0).
	\end{align}
Assume now that \eqref{eq:mineq} does not hold. 
Hence, since  $\#A_0(\eta_0)\leq\frac{|I|}{\eta_0}$ and using \eqref{eq:mineq2}, one has that
\begin{align}\label{eq:4.12}
\Upsilon\#A_0(\eta_0)\leq\frac{\Upsilon|I|}{\eta_0}\leq\frac12\Upsilon\#A_0(\eta_0)+\frac12\sum_{s\geq1}s\Upsilon \#A_s(\eta_0). 
\end{align}

	On the other hand, by Lemma \ref{lemma:rpos1} and \eqref{eq:4.12}
	\begin{align*}
	R_{i,\tau,\eps}(u,x_i^\perp,I)&=\sum_{j\in A_0(\eta_0)}	R_{i,\tau,\eps}(u,x_i^\perp,[t_j,t_{j+1}))+\sum_{s\geq1}\sum_{j\in A_s(\eta_0)}	R_{i,\tau,\eps}(u,x_i^\perp,[t_j,t_{j+1}))\notag\\
	&\geq-\Upsilon \#A_0(\eta_0)+\Upsilon\sum_{s\geq1}s\#A_s(\eta_0)\notag\\
	&\geq0,
	\end{align*}
	thus reaching a contradiction.

\end{proof}

The following lemma shows that non optimal configurations are also those for which oscillations of amplitude close to $1$ happen at a scale larger than $\tau^{1/\beta}$. 

\begin{lemma}\label{lemma:rpos2}
	Let $\delta_0\geq\tau^{1/\beta}$, $\delta>0$. Then, there exists $\tau_1>0$ such that for all $\tau\leq\tau_1$, for any $x_i^\perp\in[0,L)^{d-1}$, $\bar a\in[0,L)$ and $c>0$, if for any $s,s+\rho\in[\bar a-c-\delta_0/2,\bar a+c+\delta_0/2]$ with $|\rho|\leq\delta_0$ one has that
	\[
	|u(s)-u(s+\rho)|\leq1-\delta,
	\]  
	then $R_{i,\tau,\eps}(u,x_i^\perp,(\bar a-c,\bar a+c))>0$.
\end{lemma}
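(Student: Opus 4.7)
The plan is to adapt the Fubini/disintegration scheme of Lemma~\ref{lemma:rpos1}, using now the reinforced positivity~\eqref{eq:pos2} of Lemma~\ref{lemma:pos0} in place of bare positivity. Under the hypothesis, for every admissible pair $(s,s+\rho)$ (i.e.\ $|\rho|\leq\delta_0$ and $s,s+\rho\in[\bar a-c-\delta_0/2,\bar a+c+\delta_0/2]$)
\[
\overline{\mathcal M}^i_{\at}(u,x_i^\perp,[s,s+\rho))-(u(s)-u(s+\rho))^2\ \geq\ \frac{2\delta}{1+2\delta}\,\overline{\mathcal M}^i_{\at}(u,x_i^\perp,[s,s+\rho)).
\]
Setting $I=(\bar a-c,\bar a+c)$ and discarding in~\eqref{eq:ritau} the contribution outside
\[
E:=\bigl\{(a,b,s,\rho):\,a\in I,\,(s,\rho)\in\Omega(a,b),\,|\rho|\leq\tfrac{\delta_0}{2},\,s,s+\rho\in[\bar a-c-\tfrac{\delta_0}{2},\bar a+c+\tfrac{\delta_0}{2}]\bigr\}
\]
(which is non-negative by~\eqref{eq:pos1}) reduces the problem to a pointwise inequality.

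Computing the $(a,b)$-integral for fixed $(s,\rho)$ as in~\eqref{eq:Rineq1}-\eqref{eq:g-1ineq} gives
\[
\int_{\bar a-c}^{\bar a+c}\!\int_{\R}\!\chi_{\{(s,\rho)\in\Omega(a,b)\}}(a,b)\,db\,da\ =\ |[s\wedge(s+\rho),s\vee(s+\rho)]\cap I|\cdot|\rho|,
\]
which, together with $G^{-1}(\rho)=|\rho|^{-2}$ (valid since $|\rho|\leq\delta_0\ll L$), yields the weight $|[s\wedge(s+\rho),s\vee(s+\rho)]\cap I|/|\rho|$. Writing $\overline{\mathcal M}^i_{\at}(u,x_i^\perp,[s,s+\rho))=\int_{s\wedge(s+\rho)}^{s\vee(s+\rho)} m_{x_i^\perp}(r)\,dr$ with $m_{x_i^\perp}\geq 0$ and swapping the order of integration so that $r$ is integrated last, the statement reduces to the pointwise bound $\Phi(r)\geq (1+2\delta)/(2\delta)$ for all $r\in I$, where
\[
\Phi(r):=2\int_0^{\delta_0/2}\!\widehat K_\tau(\rho)\!\int_{\substack{s:\,r\in[s,s+\rho]\\ s,s+\rho\in[\bar a-c-\delta_0/2,\bar a+c+\delta_0/2]}}\!\!\frac{|[s,s+\rho]\cap I|}{\rho}\,ds\,d\rho;
\]
together with $\int_I m_{x_i^\perp}\,dr=\overline{\mathcal M}^i_{\at}(u,x_i^\perp,I)$ this gives $R_{i,\tau,\eps}(u,x_i^\perp,I)\geq[\tfrac{2\delta}{1+2\delta}\inf_I\Phi-1]\,\overline{\mathcal M}^i_{\at}(u,x_i^\perp,I)>0$ whenever the slice is non-constant on the enlarged interval.

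The bound on $\Phi$ is the crux of the argument and is verified as in~\eqref{eq:estlemmapos1}. For $r$ at distance $\geq\delta_0/2$ from $\partial I$, any $s\in[r-\rho,r]$ gives $[s,s+\rho]\subset I$, so the inner $s$-integral equals $\rho$ and $\Phi(r)\geq 2\int_0^{\delta_0/2}\rho\,\widehat K_\tau(\rho)\,d\rho$. For $r$ within $\delta_0/2$ of $\partial I$ an explicit computation in the appropriate boundary sub-case shows the inner integral is still at least $\rho/2$, the factor $1/2$ being recovered precisely because the admissibility set for $(s,s+\rho)$ is strictly larger than $I$. Applying $\widehat K_\tau(\rho)\geq C_{d,p}(|\rho|+\tau^{1/\beta})^{-q}$ with $q=p-d+1\geq 3$,
\[
\int_0^{\delta_0/2}\rho\,\widehat K_\tau(\rho)\,d\rho\ \gtrsim\ \tau^{-(q-2)/\beta}
\]
diverges as $\tau\to 0$, so the threshold $(1+2\delta)/(2\delta)$ is exceeded uniformly in $r$ for every $\tau\leq\tau_1(\delta,\delta_0,p,d)$. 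The main obstacle is exactly the \emph{uniformity} of the lower bound on $\Phi$ up to $\partial I$: without the enlargement of the admissibility set built into the hypothesis, $\Phi$ would collapse at the endpoints of $I$ and the scheme could not close.
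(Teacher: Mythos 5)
Your proof is correct and follows essentially the same strategy as the paper's: discard everything outside the admissible set, apply the reinforced positivity~\eqref{eq:pos2}, perform the $(a,b)$-Fubini to obtain the weight $|[s,s+\rho]\cap I|/|\rho|$, exploit the $\delta_0/2$-enlargement to recover a uniform factor near $\partial I$, and let the divergence of $\int_0^{\delta_0/2}\rho\,\widehat K_\tau(\rho)\,d\rho\sim\tau^{-(q-2)/\beta}$ beat the $-\overline{\mathcal M}^i_{\at}(u,x_i^\perp,I)$ term. The only organizational difference is that you disintegrate $\overline{\mathcal M}^i_{\at}(u,x_i^\perp,[s,s+\rho))=\int m(r)\,dr$ and reduce to the pointwise bound $\Phi(r)\geq\frac{1+2\delta}{2\delta}$, integrating over all $\rho\in(0,\delta_0/2]$, whereas the paper fixes a single scale $\tfrac12\tau^{1/\beta}\leq|\rho|\leq\tau^{1/\beta}$ and applies the averaging inequality~\eqref{eq:rhoper} once; both yield the same $\tau^{-(q-2)/\beta}$ divergence, and your boundary computation giving the factor $\rho/2$ matches the role played by~\eqref{eq:g-1ineq3} and~\eqref{eq:rhoper} in the original.
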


\begin{proof}
	Recalling \eqref{eq:pos1} and \eqref{eq:pos2} and then using Fubini Theorem, one has that
	\begin{align}
	R_{i,\tau,\eps}&(u,x_i^\perp,(\bar a-c,\bar a+c))\geq-\Mi{\bar a-c}{\bar a+c}{i}\notag\\
	&+\int_{\bar a-c}^{\bar a+c}\int_{\R}\iint_{(s,\rho)\in\Omega(a,b)\cap \{s,s+\rho\in[\bar a-c-\delta_0/2,\bar a+c+\delta_0/2],\,\frac12\tau^{1/\beta}\leq|\rho|\leq\delta_0\}}G^{-1}(\rho)\widehat K_\tau(\rho)\cdot\notag\\
	&\cdot \frac{2\delta}{1+2\delta}\Mi{s}{s+\rho}{i}\ds\d\rho\db\da\notag\\
	&\geq-\Mi{\bar a-c}{\bar a+c}{i}+\iint_{ \{s,s+\rho\in[\bar a-c-\delta_0/2,\bar a+c+\delta_0/2],\,\frac12\tau^{1/\beta}\leq|\rho|\leq\delta_0\}}G^{-1}(\rho)\widehat K_\tau(\rho)\cdot\notag\\
	&\cdot \frac{2\delta}{1+2\delta}\Mi{s}{s+\rho}{i}\int_{\bar a-c}^{\bar a+c}\int_{\R}\chi_{\{(a,b):(s,\rho)\in\Omega(a,b)\}}(a,b)\da\db\ds\d\rho.\label{eq:Rineq2}
	\end{align}
	
	Then observe that 
	\[
	\int_{\bar a-c}^{\bar a+c}\int_{\R}\chi_{\{(a,b):(s,\rho)\in\Omega(a,b)\}}(a,b)\da\db\geq{|[s,s+\rho]\cap[\bar a-c,\bar a+c]||\rho|}
	\]
	and thus, recalling \eqref{eq:gsrho} and assuming w.l.o.g. that $\delta_0\ll L$, one has that
	\begin{equation*}
	G^{-1}(\rho)	\int_{\bar a-c}^{\bar a+c}\int_{\R}\chi_{\{(a,b):(s,\rho)\in\Omega(a,b)\}}(a,b)\da\db\geq \frac{|[s,s+\rho]\cap[\bar a-c,\bar a+c]|}{|\rho|}.
	\end{equation*}
In particular, when $s,s+\rho\in[\bar a-c-\frac{1}{4}\tau^{1/\beta},\bar a+c+\frac14\tau^{1/\beta}]$ and $\frac12\tau^{1/\beta}\leq|\rho|\leq\delta_0$,
one has that
\begin{equation}\label{eq:g-1ineq3}
		G^{-1}(\rho)	\int_{\bar a-c}^{\bar a+c}\int_{\R}\chi_{\{(a,b):(s,\rho)\in\Omega(a,b)\}}(a,b)\da\db\geq \frac{|[s,s+\rho]\cap[\bar a-c,\bar a+c]|}{|\rho|}\geq\frac{1}{2}.
\end{equation}
 Restricting further the domain of integration  in \eqref{eq:Rineq2} to the pairs $s,s+\rho\in[\bar a-c-\frac14\tau^{1/\beta}, \bar a+c+\frac14\tau^{1/\beta}]$ 	and inserting \eqref{eq:g-1ineq3}, we obtain
	\begin{align}
		R_{i,\tau,\eps}&(u,x_i^\perp,(\bar a-c,\bar a+c))\geq-\Mi{\bar a-c}{\bar a+c}{i}\notag\\
		&+\frac{\delta}{1+2\delta}\iint_{ \{s,s+\rho\in[\bar a-c-\frac14\tau^{1/\beta},\bar a+c+\frac14\tau^{1/\beta}],\,\frac12\tau^{1/\beta}\leq|\rho|\leq\delta_0\}}\Mi{s}{s+\rho}{i}\widehat K_\tau(\rho)\ds\d\rho.\label{eq:4.11}
		\end{align}
	Then observe that, whenever $\frac12\tau^{1/\beta}\leq|\rho|\leq\tau^{1/\beta}$ 
	\begin{equation}\label{eq:rhoper}
	\int_{[\bar a-c-\frac14\tau^{1/\beta}, \bar a+c+\frac14\tau^{1/\beta}]}\Mi{s}{s+\rho}{i}\geq\frac{|\rho|}{4}\Mi{\bar a-c}{\bar a+c}{i}.
	\end{equation}
Hence, inserting \eqref{eq:rhoper} in \eqref{eq:4.11} one gets 
\begin{align*}
		R_{i,\tau,\eps}&(u,x_i^\perp,(\bar a-c,\bar a+c))\geq\Mi{\bar a-c}{\bar a+c}{i}\Bigl(-1+\frac{\delta}{1+2\delta}\int_{\frac12\tau^{1/\beta}\leq|\rho|\leq\tau^{1/\beta}}\frac{|\rho|}{4(|\rho|+\tau^{1/\beta})^q}\d\rho\Bigr)
	\end{align*}
	which is positive provided $\tau$ is sufficiently small depending on $\delta$. 
\end{proof}

\begin{corollary}\label{cor:pos}
	Let $1<\Upsilon\leq{\frac{17}{16}}$, $\delta>0$ and let $\eta_0$ as in Lemma \ref{lemma:rpos1}, $\delta_0$ such that  $\tau^{1/\beta}\leq\delta_0\ll\eta_0$ as in Lemma \ref{lemma:rpos2} and  $\tau_2\leq\min\{\tau_0,\tau_1\}$. Then, for all $\tau\leq\tau_2$, for all $x_i^\perp\in[0,L)^{d-1}$, $\bar a\in[0,L)$, whenever $R_{i,\tau,\eps}(u,x_i^\perp,(\bar a-\eta_0/2,\bar a+\eta_0/2))<0$ then
	\begin{enumerate}
		\item $\Mi{\bar a-\eta_0}{\bar a+\eta_0}{i}\leq\Upsilon$\\
		\item There exist $s_0<t_0\in[\bar a-\eta_0/2-\delta_0/2, \bar a+\eta_0/2+\delta_0/2]$ with $|t_0-s_0|\leq\delta_0$ such that $|u_{x_i^\perp}(s_0)-u_{x_i^\perp}(t_0)|\geq(1-\delta)$.
	\end{enumerate}
In particular, 
\begin{equation}\label{eq:Rgamma}R_{i,\tau,\eps}(u,x_i^\perp,(\bar a-\eta_0/2,\bar a+\eta_0/2))\geq-\Upsilon
\end{equation} and 
\begin{align}
&\forall\,t\in[t_0,\bar a+\eta_0],\quad|u_{x_i^\perp}(t)-u_{x_i^\perp}(t_0)|\leq\frac{1}{4}+\sqrt{2\delta}\label{eq:close1}\\
&\forall\,s\in[\bar a-\eta_0,s_0],\quad|u_{x_i^\perp}(s)-u_{x_i^\perp}(s_0)|\leq\frac{1}{4}+\sqrt{2\delta}.\label{eq:close2}
\end{align}

\end{corollary}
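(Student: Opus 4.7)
Parts (1) and (2) of the statement are the contrapositives of Lemma~\ref{lemma:rpos1} and Lemma~\ref{lemma:rpos2} respectively. To prove (1), I would apply Lemma~\ref{lemma:rpos1} with $k=1$: if $\Mi{\bar a-\eta_0}{\bar a+\eta_0}{i}\geq\Upsilon$ then $R_{i,\tau,\eps}(u,x_i^\perp,(\bar a-\eta_0/2,\bar a+\eta_0/2))\geq\Upsilon>0$, contradicting the hypothesis. For (2), I would invoke the contrapositive of Lemma~\ref{lemma:rpos2} with $c=\eta_0/2$ to produce $s$ and $s+\rho$ in $[\bar a-\eta_0/2-\delta_0/2,\,\bar a+\eta_0/2+\delta_0/2]$ with $|\rho|\leq\delta_0$ and $|u_{x_i^\perp}(s)-u_{x_i^\perp}(s+\rho)|\geq 1-\delta$; then $s_0:=\min\{s,s+\rho\}$ and $t_0:=\max\{s,s+\rho\}$ satisfy the claim. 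The lower bound \eqref{eq:Rgamma} follows immediately because, by Lemma~\ref{lemma:pos0}, the integrand in the definition~\eqref{eq:ritau} of $R_{i,\tau,\eps}$ is pointwise nonnegative, hence
\[
R_{i,\tau,\eps}(u,x_i^\perp,(\bar a-\eta_0/2,\bar a+\eta_0/2))\geq -\Mi{\bar a-\eta_0/2}{\bar a+\eta_0/2}{i}\geq -\Mi{\bar a-\eta_0}{\bar a+\eta_0}{i}\geq -\Upsilon,
\]
where the last inequality is (1).

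The pointwise closeness bounds \eqref{eq:close1}-\eqref{eq:close2} come from a Modica-Mortola budget argument. Since $u$ takes values in $[0,1]$ and $|u_{x_i^\perp}(t_0)-u_{x_i^\perp}(s_0)|\geq 1-\delta$, without loss of generality I may assume $u_{x_i^\perp}(t_0)\geq 1-\delta$ and $u_{x_i^\perp}(s_0)\leq\delta$. Then \eqref{eq:omegaab} applied with $a=u_{x_i^\perp}(t_0)$, $b=u_{x_i^\perp}(s_0)$ gives $\omega(u_{x_i^\perp}(t_0))-\omega(u_{x_i^\perp}(s_0))\geq (1+2\delta)(1-\delta)^2\geq 1-2\delta$ for $\delta$ small, which combined with \eqref{eq:partpos0} yields $\Mi{s_0}{t_0}{i}\geq 1-2\delta$. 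Since $\delta_0\ll\eta_0$, one has $[s_0,t_0]\subset[\bar a-\eta_0,\bar a+\eta_0]$, so subtracting from the total budget $\Upsilon\leq 17/16$ provided by (1) gives
\[
\Mi{\bar a-\eta_0}{s_0}{i}+\Mi{t_0}{\bar a+\eta_0}{i}\leq\Upsilon-\Mi{s_0}{t_0}{i}\leq \frac{1}{16}+2\delta.
\]
A further application of \eqref{eq:omegaab} then yields, for any $t\in[t_0,\bar a+\eta_0]$,
\[
(u_{x_i^\perp}(t)-u_{x_i^\perp}(t_0))^2\leq|\omega(u_{x_i^\perp}(t))-\omega(u_{x_i^\perp}(t_0))|\leq\Mi{t_0}{t}{i}\leq \frac{1}{16}+2\delta,
\]
so that $|u_{x_i^\perp}(t)-u_{x_i^\perp}(t_0)|\leq\sqrt{1/16+2\delta}\leq 1/4+\sqrt{2\delta}$ by subadditivity of the square root, which is \eqref{eq:close1}. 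The bound \eqref{eq:close2} on $[\bar a-\eta_0,s_0]$ is symmetric.

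No serious technical obstacle is expected: the whole corollary is essentially a packaging of the two preceding lemmas together with the slice inequalities \eqref{eq:partpos0} and \eqref{eq:omegaab}. The only conceptual content is the tight interplay between the Modica-Mortola budget and the size of the jump, namely that a jump of amplitude close to $1$ over a tiny interval $[s_0,t_0]$ exhausts almost the entire allowed budget $\Upsilon\leq 17/16$ on $[\bar a-\eta_0,\bar a+\eta_0]$, thereby forcing $u$ to stay within $1/4+\sqrt{2\delta}$ of its endpoint value on each of the two flanking subintervals.
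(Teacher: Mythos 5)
Your argument is correct and matches the paper's proof: parts (1) and (2) are the contrapositives of Lemmas~\ref{lemma:rpos1} and~\ref{lemma:rpos2} (for (1) with $k=1$, for (2) with $c=\eta_0/2$), the bound \eqref{eq:Rgamma} follows from $R_{i,\tau,\eps}\geq-\overline{\mathcal M}^i_{\at}$, monotonicity of $\overline{\mathcal M}^i_{\at}$ in the interval and part (1), and the closeness estimates come from the same Modica--Mortola budget split used in the paper, namely $\overline{\mathcal M}^i_{\at}(u,x_i^\perp,[\bar a-\eta_0,s_0))+\overline{\mathcal M}^i_{\at}(u,x_i^\perp,[t_0,\bar a+\eta_0))=\overline{\mathcal M}^i_{\at}(u,x_i^\perp,[\bar a-\eta_0,\bar a+\eta_0))-\overline{\mathcal M}^i_{\at}(u,x_i^\perp,[s_0,t_0))\leq\tfrac{17}{16}-(1-\delta)^2\leq\tfrac{1}{16}+2\delta$. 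Your detour through $\omega$ via \eqref{eq:omegaab} is equivalent to the paper's direct appeal to \eqref{eq:pos1} (which is itself proved via the same $\omega$ facts); the only small slip is that you cite the averaged identity \eqref{eq:partpos0} where what you actually use is its pointwise ingredient $\overline{\mathcal M}^i_{\at}(u,x_i^\perp,[s,s+\rho))\geq|\omega(u_{x_i^\perp}(s+\rho))-\omega(u_{x_i^\perp}(s))|$, stated in the text just before that equation.
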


\begin{proof}
	The only estimates which are not restatements of Lemma \ref{lemma:rpos1} and Lemma \ref{lemma:rpos2} are \eqref{eq:close1} and \eqref{eq:close2}. 
	
	To this aim, notice that by \eqref{eq:pos2}
	   \begin{align*}
	\Mi{t_0}{\bar a+\eta_0}{i}+\Mi{\bar a-\eta_0}{s_0}{i}&=\Mi{\bar a-\eta_0}{\bar a+\eta_0}{i}-\Mi{s_0}{t_0}{i}\notag\\
	&\leq\frac{17}{16}-(u_{x_i^\perp}(s_0)-u_{x_i^\perp}(t_0))^2\notag\\
	&\leq\frac{1}{16}+2\delta.
	\end{align*}
	Thus, by \eqref{eq:pos1},
	\begin{align}
	|u_{x_i^\perp}(s)-u_{x_i^\perp}(s_0)|&\leq\sqrt{\Mi{s}{s_0}{i}}\leq\frac14+\sqrt{2\delta}\quad\forall\,s\in[\bar a-\eta_0,s_0],\label{eq:381}\\
	|u_{x_i^\perp}(t)-u_{x_i^\perp}(t_0)|&\leq\sqrt{\Mi{t_0}{t}{i}}\leq\frac14+\sqrt{2\delta}\quad\forall\,t\in[t_0,\bar a+\eta_0].\label{eq:581}
	\end{align}
In particular, the upper bound $\Upsilon\leq\frac{17}{16}$ enters only in the proof of \eqref{eq:close1} and \eqref{eq:close2}.

\end{proof}

The following lemma (diffuse interface counterpart of Lemma 7.7 in \cite{DR}) contains a localized version of the one-dimensional  minimization of the slices of the functional. In comparison with the sharp interface problem, where a periodic extension argument is also used, one has additionally to take care of the growth of the Modica-Mortola term of the periodic extension. At this purpose we will use Corollary \ref{cor:pos0}.

\begin{lemma}\label{lemma:1dopt}
	There exists $C_0>0$ with the following property. Let $u\in W^{1,2}_{\mathrm{loc}}(\R^d;[0,1])$ $[0,L)^d$-periodic, $x_i^\perp\in[0,L)^{d-1}$ and $I\subset[0,L)$ be an open interval. Then, for all $\tau\leq\tau_2$ with $\tau_2$ as in Corollary \ref{cor:pos}  it holds
	\begin{equation}\label{eq:stima1dper}
	R_{i,\tau,\eps}(u,x_i^\perp,I)\geq C^*_{\tau,\eps}|I|-C_0,
	\end{equation}
where $C^*_{\tau,\eps}$ was defined in \eqref{eq:cstartau}.
\end{lemma}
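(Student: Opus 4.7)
The plan is to construct a $\tilde L$-periodic one-dimensional competitor $\tilde u\in \mathcal U^{per}_{\tilde L}$ by periodizing a slight modification of $u_{x_i^\perp}|_I$, with $\tilde L$ within an absolute constant of $|I|$, and to compare $R_{i,\tau,\eps}(u,x_i^\perp,I)$ with $\tilde L\,\Fcal_{\tau,\tilde L,\eps}(\tilde u)$ up to an additive constant. Since the constant $u\equiv 0$ belongs to $\mathcal U^{per}_L$ with zero energy, one has $C^*_{\tau,\eps}\le 0$, and moreover $|C^*_{\tau,\eps}|$ is uniformly bounded for $\tau\le \tau_2$ and $\eps$ small. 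If $R_{i,\tau,\eps}(u,x_i^\perp,I)\ge 0$ the claim follows immediately from $C^*_{\tau,\eps}\le 0$, so I assume $R_{i,\tau,\eps}(u,x_i^\perp,I)<0$ throughout.

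\textbf{Construction.} Under this assumption Corollary \ref{cor:pos0} yields the linear Modica--Mortola bound $\overline{\mathcal M}^i_{\at}(u,x_i^\perp,I)\le 2\Upsilon\max\{|I|/\eta_0,1\}$. A Chebyshev-type argument then locates points $s^*\in[\inf I,\inf I+C_1]$ and $t^*\in[\sup I-C_1,\sup I]$, with $C_1$ absolute, at which $u_{x_i^\perp}$ is within a fixed $\delta$ of a common stable phase (say $0$) and only carries $O(1)$ Modica--Mortola on unit-scale neighborhoods. Setting $\tilde L:=t^*-s^*$ and performing a standard Modica--Mortola surgery in bounded neighborhoods of $s^*,t^*$ (cost $O(1)$ in both terms of the energy) so as to match the endpoint values to $0$, the $\tilde L$-periodic extension $\tilde u\in W^{1,2}_{\loc}(\R;[0,1])$ is well defined. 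By the very definition of $C^*_{\tau,\eps}$, $\Fcal_{\tau,\tilde L,\eps}(\tilde u)\ge C^*_{\tau,\eps}$, so it will suffice to prove $R_{i,\tau,\eps}(u,x_i^\perp,I)\ge \tilde L\,\Fcal_{\tau,\tilde L,\eps}(\tilde u)-C_0'$, since $||I|-\tilde L|\le 2C_1$ and $|C^*_{\tau,\eps}|$ is uniformly bounded absorb the remaining error.

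\textbf{Comparison and main obstacle.} Applying the disintegration identity of Step 2 in the proof of Lemma \ref{lemma:dec} to the $\tilde L$-periodic $\tilde u$, with $G$ computed at period $\tilde L$, one recasts $\tilde L\,\Fcal_{\tau,\tilde L,\eps}(\tilde u)$ in exactly the form of $R_{i,\tau,\eps}(\tilde u,\cdot,[0,\tilde L))$ but with $G$ replaced by $G_{\tilde L}(\rho)=|\rho|\min(|\rho|,\tilde L)$. On the other hand, by Fubini the nonlocal piece of $R_{i,\tau,\eps}(u,x_i^\perp,I)$ rewrites as
\[
\int_{\R^2}\frac{|[s,s+\rho]\cap I|}{\min(|\rho|,L)}\bigl[\overline{\mathcal M}^i_{\at}(u,(s,s+\rho))-(u(s)-u(s+\rho))^2\bigr]\widehat K_\tau(\rho)\,\ds\,\d\rho,
\]
so pairs $(s,s+\rho)$ entirely contained in $[s^*,t^*]$ and away from the surgery region give equal contributions for $u$ and $\tilde u$. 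The three residual sources of error, namely (i) pairs straddling $\partial I$ at bounded distance from $s^*$ or $t^*$, absorbed into the unit-scale Modica--Mortola bound and the surgery cost; (ii) long-range pairs with $|\rho|\gg 1$, controlled by $\int_r^\infty\widehat K_\tau(\rho)\d\rho\lesssim r^{-\beta}$ with $\beta=p-d-1\ge 1$; and (iii) the discrepancy between $G^{-1}(\rho)$ and $G^{-1}_{\tilde L}(\rho)$, nontrivial only for $|\rho|>\min(L,\tilde L)$ and controlled by the same tail decay, all sum to an absolute constant $C_0'$. The main obstacle is precisely this comparison: the reduced ``boundary'' weights $|[s,s+\rho]\cap I|/\min(|\rho|,L)$ in $R_{i,\tau,\eps}$ together with the surgery cost must be absorbed into a single $L$- and $|I|$-independent constant, and the assumption $p\ge d+2$ (equivalent to $\beta\ge 1$) enters crucially through the integrability of the tails of $\widehat K_\tau$ at infinity.
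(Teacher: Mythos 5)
You take essentially the same strategy as the paper: pass to the case $R_{i,\tau,\eps}(u,x_i^\perp,I)<0$, locate good endpoints near $\partial I$, build a one-dimensional periodic competitor from the restriction of $u$, compare with $R_{i,\tau,\eps}(u,x_i^\perp,I)$ using the integrable tail of $\widehat K_\tau$, and invoke the definition of $C^*_{\tau,\eps}$; both arguments also use Corollary~\ref{cor:pos0} for the linear Modica--Mortola bound. Your explicit handling of the discrepancy between $G$ at period $L$ and $G_{\tilde L}$ at period $\tilde L$ is a point the paper does not spell out.

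The gap is in the reduction and the endpoint selection. The paper does not merely assume $R_{i,\tau,\eps}(u,x_i^\perp,I)<0$: using $C^*_{\tau,\eps}<0$ it first shrinks $I$ until $R_{i,\tau,\eps}(u,x_i^\perp,[x_1,x))<0$ and $R_{i,\tau,\eps}(u,x_i^\perp,(x,x_2])<0$ for \emph{every} $x\in(x_1,x_2)$. Only under this hereditary hypothesis does Corollary~\ref{cor:pos} apply on every $\eta_0$-scale subinterval touching $\partial I$, and this is what delivers the endpoint structure the proof uses: near-unit jumps near $x_1$ and $x_2$, hence level-$1/2$ crossings $\bar s,\bar t$ at which the competitor is taken as the antisymmetric reflection of $u|_{[\bar s,\bar t]}$ as in~\eqref{eq:refl} (no surgery, no added Modica--Mortola cost), together with the uniform bound $R_{i,\tau,\eps}\ge -\Upsilon$ on the peeled-off boundary pieces $I\setminus[\bar s,\bar t]$, which is exactly what gets absorbed into $C_0$. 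Your Chebyshev argument from the linear bound of Corollary~\ref{cor:pos0} only controls the \emph{average} Modica--Mortola density over $I$; by itself it neither prevents that mass from piling up in a unit-scale boundary layer of $I$ (which would ruin the surgery estimate and the boundary-pair error) nor pins $u$ near a pure phase at chosen points close to $\partial I$. Both of these can fail if only $R_{i,\tau,\eps}(u,x_i^\perp,I)<0$ is assumed. Strengthening your reduction to the paper's hereditary form repairs this, and as a side effect lets you replace the surgery with the cleaner reflection construction.
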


\begin{proof}
	Let $I=[x_1,x_2]$. Let $1<\Upsilon\leq\frac{17}{16}$ and let $\eta_0$ be as in Lemma \ref{lemma:rpos1} and Corollary \ref{cor:pos}. W.l.o.g., we can assume that
	\begin{equation}\label{eq:min0}
	 R_{i,\tau,\eps}(u,x_i^\perp,[x_1,x))<0\text{ and } R_{i,\tau,\eps}(u,x_i^\perp,(x,x_2])<0\quad\text{ for all }x\in (x_1,x_2).
	\end{equation} Indeed, if the contrary holds, setting $\bar I=I\setminus[x_1,x)$ or $\bar I=I\setminus (x,x_2]$ whenever the result of the lemma holds for $\bar I$ then it holds also for $I$, since it holds  
	\[
	R_{i,\tau,\eps}(u,x_i^\perp,I)\geq R_{i,\tau,\eps}(u,x_i^\perp,\bar I)\geq C^*_{\tau,\eps}|\bar I|-C_0\geq C^*_{\tau,\eps}|I|-C_0,
	\]
	where in the last inequality we used the fact that $C^*_{\tau,\eps}$ is a negative constant. 
	W.l.o.g., we can also assume that $|I|\geq5\eta_0$. Indeed, if the contrary holds, since $\tau\leq\tau_2$ one can use  \eqref{eq:Rgamma} and then \eqref{eq:stima1dper} holds with $C_0=-5\Upsilon$. 

Let us then set $I=[\bar x,\bar y]$ with $|\bar y-\bar x|\geq5\eta_0$. By the assumptions on $I$ and Corollary \ref{cor:pos}, whenever $\tau\leq\tau_2$ then there exist $s_0<t_0\in[\bar x-\delta_0/2, \bar x+\eta_0+\delta_0/2]$ with $|t_0-s_0|\leq\delta_0$ such that $|u_{x_i^\perp}(s_0)-u_{x_i^\perp}(t_0)|\geq(1-\delta)$ and $s_1<t_1\in[\bar y-\eta_0-\delta_0/2, \bar y+\delta_0/2]$ with $|t_1-s_1|\leq\delta_0$ such that $|u_{x_i^\perp}(s_1)-u_{x_i^\perp}(t_1)|\geq(1-\delta)$. In particular, there exist $\bar s\in[\bar x-\delta_0/2, \bar x+\eta_0+\delta_0/2]$ and $\bar t\in[\bar y-\eta_0-\delta_0/2, \bar y+\delta_0/2]$ such that $u_{x_i^\perp}(\bar s)=u_{x_i^\perp}(\bar t)=\frac12$.

Then we define a $|\bar t-\bar s|$-periodic function $\bar u^i$ on $\R$ by the usual symmetric reflection of $u_{|_{[\bar s,\bar t]}}$ (i.e. as in \eqref{eq:refl} with $h=|\bar t-\bar s|$) and set $\bar u(x)=\bar u^i(x_i)$ for all $x\in\R^d$. By optimality of the energy density $C^*_{\tau,\eps}$ on one-dimensional periodic functions one has that
\begin{equation*}
R_{i,\tau,\eps}(\bar u, x_i^\perp, [\bar s,\bar t))\geq C^*_{\tau,\eps}|\bar t-\bar s|\geq C^*_{\tau,\eps}|I|+\delta_0C^*_{\tau,\eps}.
\end{equation*}
Thus, since by the assumptions on $I$ one has that
\[
R_{i,\tau,\eps}(u, x_i^\perp, I)-R_{i,\tau,\eps}(u, x_i^\perp, [\bar s,\bar t])\geq-2\Upsilon, 
\]
we are left to prove that 
\begin{equation}\label{eq:perest}
R_{i,\tau,\eps}(u, x_i^\perp, [\bar s,\bar t])-R_{i,\tau,\eps}(\bar u, x_i^\perp, [\bar s,\bar t])\geq-\bar C.
\end{equation}

Indeed, one can further reduce \eqref{eq:perest} to prove that
\begin{equation}\label{eq:perest4}
R_{i,\tau,\eps}(u, x_i^\perp, [\bar x+3\eta_0/2,\bar y-3\eta_0/2])-R_{i,\tau,\eps}(\bar u, x_i^\perp, [\bar x+3\eta_0/2,\bar y-3\eta_0/2])\geq-\bar C,
\end{equation}
since by construction $[\bar x+3\eta_0/2,\bar y-3\eta_0/2]\subset[\bar s,\bar t]$ and $[\bar s,\bar t]\setminus[\bar x+3\eta_0/2,\bar y-3\eta_0/2]\subset [\bar x-\eta_0/2,\bar x+\eta_0+\delta_0]\cup[\bar y-\eta_0-\delta_0,\bar y+\eta_0/2]$, intervals on which (since $\delta_0\ll\eta_0/2$) by assumption $R_{i,\eps,\tau}\geq-\Upsilon$. Notice also that 
\begin{equation}\label{eq:infint}
\inf\{|s-t|:\,s\in[\bar x+3\eta_0/2,\bar y-3\eta_0/2],\,t\in \R\setminus [\bar s,\bar t]\}\geq\eta_0/2. 
\end{equation}

Since $u=\bar u$ on $[\bar s,\bar t]\supset [\bar x+3\eta_0/2,\bar y-3\eta_0/2]$, one has that
\begin{align}
R_{i,\tau,\eps}&(u, x_i^\perp, [\bar x+3\eta_0/2,\bar y-3\eta_0/2])-R_{i,\tau,\eps}(\bar u, x_i^\perp, [\bar x+3\eta_0/2,\bar y-3\eta_0/2])\notag\\
&\geq\int_{\bar x+3\eta_0/2}^{\bar y-3\eta_0/2}\int_{\R}\iint_{(s,\rho)\in \Omega(a,b)\cap\bigl(\{ s\notin[\bar s,\bar t]\}\cup\{s+\rho\notin [\bar s,\bar t]\}\bigr)}G^{-1}(\rho)\cdot\notag\\
&\cdot\Bigl(\Mi{s}{s+\rho}{i}-\overline{\mathcal M}^i_{\at}(\bar u, x_i^\perp,[s,s+\rho))\Bigr)\widehat K_\tau(\rho)\ds\d\rho\db\da\notag\\
&+\int_{\bar x+3\eta_0/2}^{\bar y-3\eta_0/2}\int_{\R}\iint_{(s,\rho)\in \Omega(a,b)\cap\bigl(\{ s\notin[\bar s,\bar t]\}\cup\{s+\rho\notin [\bar s,\bar t]\}\bigr)}G^{-1}(\rho)\cdot\Bigl((\bar u(s)-\bar u(s+\rho))^2-(u(s)-u(s+\rho))^2\Bigr)\cdot\notag\\
&\cdot\widehat K_\tau(\rho)\ds\d\rho\db\da.\label{eq:perest2}
\end{align}

Let us first estimate the second term in the r.h.s. of \eqref{eq:perest2}. 
Observe that 
	\begin{equation}\label{eq:intab}
\int_{\bar x+3\eta_0/2}^{\bar y-3\eta_0/2}\int_{\R}\chi_{\{(a,b):(s,\rho)\in\Omega(a,b)\}}(a,b)\da\db=|\rho||[\bar x+3\eta_0/2,\bar y-3\eta_0/2]\cap[s,s+\rho]|.
\end{equation}
 Moreover, since either $s$ or $s+\rho$ in the above integrals do not belong to $[\bar s,\bar t]$ and at the same time $[s,s+\rho]$ has to contain a point in $[\bar x+3\eta_0/2,\bar y-3\eta_0/2]$, then by \eqref{eq:infint} $|\rho|\geq\eta_0/2$.
 
 Then, using that 
 $|u|,|\bar u|\leq1$ and  Fubini Theorem one obtains 
\begin{align}
\int_{\bar x+3\eta_0/2}^{\bar y-3\eta_0/2}\int_{\R}&\iint_{(s,\rho)\in \Omega(a,b)}G^{-1}(\rho)\cdot\Bigl((\bar u(s)-\bar u(s+\rho))^2-(u(s)-u(s+\rho))^2\Bigr)\widehat K_\tau(\rho)\ds\d\rho\db\da\notag\\
&\geq-\iint_{\bigl(\{ s\notin[\bar s,\bar t]\}\cup\{s+\rho\notin [\bar s,\bar t]\}\bigr)\cap \{[s,s+\rho]\cap[\bar s,\bar t]\neq\emptyset\}}\frac{|[\bar x+3\eta_0/2,\bar y-3\eta_0/2]\cap[s,s+\rho]|}{\rho}\widehat K_\tau(\rho)\ds\d\rho\notag\\
&\geq-2\int_{\bar x+3\eta_0/2}^{\bar y-3\eta_0/2}\int_{-\infty}^{\bar x+\eta_0} \widehat K_\tau(s-t)\ds\dt\notag\\
&\geq-\bar C(\eta_0)
\end{align}
with $\bar C(\eta_0)\sim\frac{1}{\eta_0^{q-2}}$.

Let us now deal with the first term in the r.h.s. of \eqref{eq:perest2}. By positivity of the Modica-Mortola term, Fubini Theorem and \eqref{eq:intab}, one has that 
\begin{align}
&\int_{\bar x+3\eta_0/2}^{\bar y-3\eta_0/2}\int_{\R}\iint_{(s,\rho)\in \Omega(a,b)\cap\bigl(\{ s\notin[\bar s,\bar t]\}\cup\{s+\rho\notin [\bar s,\bar t]\}\bigr)}G^{-1}(\rho)\cdot\notag\\
&\cdot\Bigl(\Mi{s}{s+\rho}{i}-\overline{\mathcal M}^i_{\at}(\bar u, x_i^\perp,[s,s+\rho))\Bigr)\widehat K_\tau(\rho)\ds\d\rho\db\da\notag\\
&\geq
-\int_{\bar x+3\eta_0/2}^{\bar y-3\eta_0/2}\int_{\R}\iint_{(s,\rho)\in \Omega(a,b)\cap\bigl(\{ s\notin[\bar s,\bar t]\}\cup\{s+\rho\notin [\bar s,\bar t]\}\bigr)}G^{-1}(\rho)\overline{\mathcal M}^i_{\at}(\bar u, x_i^\perp,[s,s+\rho))\widehat K_\tau(\rho)\ds\d\rho\db\da\notag\\
&\geq-\iint_{\bigl(\{ s\notin[\bar s,\bar t]\}\cup\{s+\rho\notin [\bar s,\bar t]\}\bigr)\cap \{[s,s+\rho]\cap[\bar s,\bar t]\neq\emptyset\}}\overline{\mathcal M}^i_{\at}(\bar u, x_i^\perp,[s,s+\rho))\widehat K_\tau(\rho)\ds\d\rho.\label{eq:perest3}
\end{align}
Now notice that, by \eqref{eq:min0} and since $\bar u$ is obtained by periodic reflection of $u_{|_{[\bar s, \bar t]}}$, on each interval $[s,s+\rho]$ as above $R_{i,\eps,\tau}(\bar u,x_i^\perp,[s,s+\rho))<0$. Therefore, Corollary \ref{cor:pos0} holds implying that
\begin{equation}\label{eq:mest2}
\overline{\mathcal M}^i_{\at}(\bar u, x_i^\perp,[s,s+\rho))\leq\frac{2\Upsilon|\rho|}{\eta_0}.
\end{equation}
Substituting \eqref{eq:mest2} into \eqref{eq:perest3} one gets
\begin{align}
&-\iint_{\bigl(\{ s\notin[\bar s,\bar t]\}\cup\{s+\rho\notin [\bar s,\bar t]\}\bigr)\cap \{[s,s+\rho]\cap[\bar s,\bar t]\neq\emptyset\}}\overline{\mathcal M}^i_{\at}(\bar u, x_i^\perp,[s,s+\rho))\widehat K_\tau(\rho)\ds\d\rho\notag\\
&\geq-\int\int_{\bar x+3\eta_0/2}^{\bar y-3\eta_0/2}\int_{-\infty}^{\bar x+\eta_0} \frac{|s-t|}{\eta_0}\widehat K_\tau(s-t)\ds\dt\notag\\
&\geq-\hat C(\eta_0),
\end{align}
with $\hat C(\eta_0)\sim\frac{1}{\eta_0^{q-3}}$, thus concluding the proof of \eqref{eq:perest4}, hence of the lemma.

\end{proof}

In the next lemma, diffuse interface counterpart of Lemma 7.11 in \cite{DR}, we give a lower bound of the energy on cubes where $u$ is either close to $0$ or close to $1$ in $L^1$. 

\begin{lemma}\label{lemma:stimaa-1}
	There exists a constant $C_1>0$ such that the following holds.  Let $u\in W^{1,2}_{\loc}(\R^d;[0,1])$ be such that
	\begin{equation}
	\min\bigl\{\|u-1\|_{L^1(Q_l(z))},\,\|u\|_{L^1(Q_l(z))} \bigr\}\leq\nu l^d, 
	\end{equation}
	for some $\nu>0$. Let $1<\Upsilon\leq{\frac{17}{16}}$. Then, provided $\tau$ is sufficiently small,
		\begin{equation}\label{eq:fa-1}
	\begin{split}
	\bar F_{\tau,\eps} (u,Q_{l}(z)) \geq -C_1\frac {\Upsilon\nu d } {\eta_0 },
	\end{split}
	\end{equation}
	where $\eta_0=\eta_0(\Upsilon)$ is as in Corollary \ref{cor:pos}.
\end{lemma}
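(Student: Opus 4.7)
My strategy is to reduce to a slicewise analysis of the $R_{i,\tau,\eps}$ term in the decomposition \eqref{eq:3.9}, count the ``bad'' cells along each slice on which this term is negative, and control this count via the $L^1$-hypothesis by a Vitali-type selection on chunks where $u\geq 1/2$.

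\textbf{Symmetry and positivity.} Every ingredient in \eqref{eq:3.9} is invariant under $u\mapsto 1-u$: the Modica--Mortola density is symmetric as $\|\nabla(1-u)\|_1=\|\nabla u\|_1$ and $W(1-u)=W(u)$, while $(u(s)-u(s+\rho))^2$ and $f_u$ in \eqref{eq:fu} are squared differences. Hence $\bar F_{\tau,\eps}(u,Q_l(z))=\bar F_{\tau,\eps}(1-u,Q_l(z))$ and one may assume $\|u\|_{L^1(Q_l(z))}\leq \nu l^d$. Since $V_{i,\tau},W_{i,\tau}\geq0$, and for $\tau$ sufficiently small $C_\tau>1$ so that the last summand in \eqref{eq:3.9} is non-negative as well, the estimate reduces to a lower bound on
\[
\frac{1}{l^d}\sum_{i=1}^d\int_{Q_l^\perp(z_i^\perp)} R_{i,\tau,\eps}(u,x_i^\perp,I_i)\,dx_i^\perp,\qquad I_i:=(z_i-l/2,z_i+l/2).
\]

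\textbf{Counting bad cells via $H$-chunks.} For a fixed slice $x_i^\perp$ I partition $I_i$ into $\lceil l/\eta_0\rceil$ cells of length $\eta_0$ and call a cell \emph{bad} when $R_{i,\tau,\eps}(u,x_i^\perp,\cdot)$ is negative on it. The definition \eqref{eq:ritau} is additive in the interval argument, and \eqref{eq:Rgamma} gives $R_{i,\tau,\eps}\geq-\Upsilon$ on a bad cell of length $\eta_0$; letting $N_i(x_i^\perp)$ count the bad cells, one obtains $R_{i,\tau,\eps}(u,x_i^\perp,I_i)\geq -\Upsilon N_i(x_i^\perp)$. Corollary \ref{cor:pos} applied to a bad cell centered at $\bar a$ produces $s_0<t_0$ with $|u_{x_i^\perp}(s_0)-u_{x_i^\perp}(t_0)|\geq 1-\delta$, and since $u\in[0,1]$ one of these values is $\geq 1-\delta$. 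Then \eqref{eq:close1}--\eqref{eq:close2} furnish, on the appropriate side, an interval of length $\geq \eta_0/4$ contained in $[\bar a-\eta_0,\bar a+\eta_0]$ on which $u\geq 3/4-\delta-\sqrt{2\delta}\geq 1/2$; call this the \emph{$H$-chunk} of the bad cell.

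\textbf{Vitali selection and conclusion.} A greedy (1D Vitali) selection on the family of $H$-chunks, whose lengths lie in $[\eta_0/4,2\eta_0]$, extracts a disjoint subfamily of cardinality $\geq N_i(x_i^\perp)/C$ for a purely dimensional constant $C$. Since $u\geq 1/2$ on its union,
\[
\|u_{x_i^\perp}\|_{L^1(I_i')}\geq \frac{N_i(x_i^\perp)\,\eta_0}{8C},\qquad I_i':=(z_i-l/2-\eta_0,\,z_i+l/2+\eta_0).
\]
Isolating the $O(1)$ bad cells within an $\eta_0$-collar of $\partial I_i$ from the interior ones, whose $H$-chunks stay in $I_i$, integrating over $x_i^\perp\in Q_l^\perp(z_i^\perp)$, applying Fubini and the hypothesis $\|u\|_{L^1(Q_l(z))}\leq\nu l^d$, and summing over $i=1,\dots,d$, we arrive at $\bar F_{\tau,\eps}(u,Q_l(z))\geq -C_1\Upsilon\nu d/\eta_0$. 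The main obstacle is handling the boundary bad cells whose $H$-chunks can protrude outside $Q_l(z)$ and are thus not controlled by the hypothesis; they contribute $O(l^{d-1})$ before division by $l^d$, which is absorbed into $C_1\Upsilon\nu d/\eta_0$ provided $l$ is not pathologically small (as will be the case in the applications). The Vitali route deliberately sidesteps any subtle case analysis about the alternation of transition directions between consecutive bad cells, which would be fragile because transitions may occur inside ``good'' cells as well.
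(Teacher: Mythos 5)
Your proof is correct and follows essentially the same route as the paper: decompose via \eqref{eq:3.9}, drop the nonnegative $V$, $W$ and potential terms, partition each slice into $\eta_0$-cells, bound $R\geq-\Upsilon$ on the bad cells via Corollary~\ref{cor:pos}, extract from each bad cell an interval of length $\gtrsim\eta_0$ on which $u$ is bounded away from its majority value, and count these against the $L^1$-hypothesis. The only cosmetic differences are that the paper assumes $\|u-1\|_{L^1}\leq\nu l^d$ and extracts intervals where $u\leq 3/8$ (you mirror via $u\mapsto1-u$ and look where $u\geq 1/2$), and that the paper replaces your Vitali selection with the simpler observation that chunks from every other bad cell are already disjoint, yielding the factor $1/2$ directly; neither change affects the argument, and the boundary-cell issue you flag is likewise present and quietly absorbed in the paper's constant.
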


\begin{proof}
	By assumption, we assume w.l.o.g. that $\|u-1\|_{L^1(Q_l(z))}\leq\nu l^d$. In particular, by Chebyshev inequality 
	\begin{equation}
	\label{eq:38}
	\Bigl|\Bigl\{u\leq\frac38\Bigr\}\cap Q_l(z)\Bigr|\leq \frac{8}{5}\nu l^d.
	\end{equation}
	Let $z_i-l/2=t_0<t_1<\dots<t_N=z_i+l/2$ be a partition of $Q_l^i(z_i)$ into intervals $[t_k,t_{k+1})$ of size $\eta_0$ (with eventually $|t_N-t_{N-1}|\leq\eta_0$). Then, 
	\begin{align}
	\bar{F}_{i,\tau,\eps}(u,Q_{l}(z))        & \geq \frac{1}{l^d} \int_{Q^{\perp}_l(z^{\perp}_{i})}R_{i,\tau,\eps}(u,x_i^\perp,Q_l^i(z_i))\dx_i^\perp\notag\\
	&\geq \frac{1}{l^d} \int_{Q^{\perp}_l(z^{\perp}_{i})}\sum_{k\in\mathcal K(Q_l^i(z))}R_{i,\tau,\eps}(u,x_i^\perp,[t_k,t_{k+1}))\dx_i^\perp,\label{eq:a-11}
	\end{align}
	\[
	\mathcal K(Q_l^i(z))=\{k\in\{1,\dots,N\}:\,R_{i,\tau,\eps}(u,x_i^\perp,[t_k,t_{k+1}))<0\}.
	\]
	 In particular, by Corollary \ref{cor:pos}, given $1<\Upsilon\leq\frac{17}{16}$ and ${\delta}\ll1$, there exist $\eta_0$ and $\tau_2$ such that for any $\tau\leq\tau_2$
	\begin{equation}\label{eq:ups}R_{i,\tau,\eps}(u,x_i^\perp,([t_k,t_{k+1})))\geq-\Upsilon
	\end{equation} and  there exist $\bar s<\bar t\in[t_k-\delta_0/2, t_{k+1}+\delta_0/2]$ with $|\bar t-\bar s|\leq\delta_0\ll\eta_0$ such that $|u_{x_i^\perp}(\bar s)-u_{x_i^\perp}(\bar t)|\geq(1- \delta)$. Moreover, 
	\begin{align}
	&\forall\,t\in[\bar t, t_{k+1}+\eta_0/2),\quad|u_{x_i^\perp}(t)-u_{x_i^\perp}(\bar t)|\leq\frac{1}{4}+\sqrt{2\delta}\label{eq:close3}\\
	&\forall\,s\in[t_k-\eta_0/2,\bar s],\quad|u_{x_i^\perp}(s)-u_{x_i^\perp}(\bar s)|\leq\frac{1}{4}+\sqrt{2\delta}.
	\end{align}
	Hence, when ${\delta}$ is sufficiently small, for any $k\in\mathcal K(Q_l^i(z_i))$ there exist an interval of size at least $\eta_0/4$ on which $u\leq\frac{3}{8}$. By construction, there exist at least $\frac12\#\mathcal K(Q_l^i(z))$ of such intervals which are disjoint. Then, inserting this information together with \eqref{eq:ups} and \eqref{eq:38} in \eqref{eq:a-11} one obtains that
	\begin{align}
		\bar{F}_{i,\tau,\eps}(u,Q_{l}(z))&\geq\frac{1}{l^d} \int_{Q^{\perp}_l(z^{\perp}_{i})}\sum_{k\in\mathcal K(Q_l^i(z))}R_{i,\tau,\eps}(u,x_i^\perp,[t_k,t_{k+1}))\dx_i^\perp\notag\\
		&\geq-\frac{1}{l^d}\int_{Q^{\perp}_l(z^{\perp}_{i})}{\Upsilon}\#\mathcal K(Q_l^i(z_i))\dx_i^\perp\notag\\
		&\geq-\frac{1}{l^d}\int_{Q^{\perp}_l(z^{\perp}_{i})}{\Upsilon}\frac{2\Bigl|\Bigl\{u_{x_i^\perp}\leq\frac38\Bigr\}\cap Q_l(z)\Bigr|}{\eta_0/4}\notag\\
		&\geq-C_1\frac{\Upsilon\nu}{\eta_0}.
	\end{align}
Summing over $i\in\{1,\dots,d\}$ one obtains \eqref{eq:fa-1}.	
\end{proof}

The following lemma roughly shows that, whenever the function $u$ on a subset of a slice in direction $e_i$ is close to a stripe with boundaries orthogonal to $e_j$ for some $j\neq i$, then the contribution to the energy $\bar F_{i,\tau,\eps}$ is positive. It is the counterpart of the local stability Lemma 7.8 in \cite{DR}. 

\begin{lemma}
	\label{lemma:stability}
	Let $\eta_0$, $\tau_2$ be as in Corollary \ref{cor:pos}. Then, there exists $0<\tau_3\leq\tau_2$, $\bar\sigma>0$ (independent of $l$) such that for every $\tau\leq\tau_3$ and  $\sigma\leq\bar{\sigma}$ the following holds: let $x_i^\perp\in Q_l^\perp(z)$ and $z$ s.t. 
	\begin{equation}\label{eq:djsmall}
	D^{j}_{\eta}(u,Q_l(z))\leq\sigma \text{ for some } j\neq i.
	\end{equation}
	Then,
	\begin{align}\label{eq:r+v}
	R_{i,\tau,\eps}(u,x_i^\perp,(z_i-l/2+\eta_0,z_i+l/2-\eta_0))+V_{i,\tau}(u,x_i^\perp,(z_i-l/2+\eta_0,z_i+l/2-\eta_0))\geq0
	\end{align}
{and equality holds if and only if $u=u(x_i^\perp)$.}
\end{lemma}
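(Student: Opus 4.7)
The key geometric input is that every $F\in\mathcal A^{j}_{\eta}$ has $\chi_F(x)=\chi_F(x_j)$, so $\chi_F$ is constant along every line parallel to $e_i$. Under the hypothesis $D^{j}_{\eta}(u,Q_l(z))\le\sigma$ one can fix $F\in\mathcal A^j_\eta$ with $\|u-\chi_F\|_{L^1(Q_l(z))}\le\sigma l^d$, and Fubini with Chebyshev show that, for all but a set of relative offsets $y_i^\perp$ of $(d-1)$-measure $O(\sqrt\sigma\,l^{d-1})$, the shifted $e_i$-slice $s\mapsto u(x_i^\perp+y_i^\perp+se_i)$ is $L^1$-close to a constant in $\{0,1\}$ on $[z_i-l/2,z_i+l/2]$.

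Setting $\Delta(z,s,\rho):=u(z+(s+\rho)e_i)-u(z+se_i)$ and using the elementary inequality $(a-b)^2\ge\tfrac12a^2-b^2$, the $y_i^\perp$-integral inside $V_{i,\tau}$ admits the pointwise bound
\[
\int_{\R^{d-1}}\!\!f_u\,K_\tau(\rho e_i+y_i^\perp)\,dy_i^\perp\ \ge\ \tfrac12\Delta(x_i^\perp,s,\rho)^2\,\widehat K_\tau(\rho)\ -\ \int_{\R^{d-1}}\!\!\Delta(x_i^\perp+y_i^\perp,s,\rho)^2\,K_\tau(\rho e_i+y_i^\perp)\,dy_i^\perp.
\]
The subtracted remainder is shown to vanish as $\sigma\to 0$ by splitting $y_i^\perp$ into the good set (on which the $\omega$-inequality \eqref{eq:omegaab} and the Modica--Mortola density upgrade the slice-wise $L^1$-smallness to $L^2$-smallness of $\Delta$) and the bad set of measure $O(\sqrt\sigma)$, invoking Lemma \ref{lemma:int14} to rule out the concentration of persistent small-amplitude oscillations on a thin $y_i^\perp$-region. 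The net effect is a lower bound of the form $V_{i,\tau}(u,x_i^\perp,I)\ge \tfrac{1}{4d}\int_I\int_\R\iint_{\Omega(a,b)}G^{-1}(\rho)\Delta(x_i^\perp,s,\rho)^2\widehat K_\tau(\rho)\,ds\,d\rho\,db\,da-\mathrm{err}(\sigma)$, with $\mathrm{err}(\sigma)\to 0$ as $\sigma\to 0$, uniformly in $L$ and $l$.

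Combining this with $R_{i,\tau,\eps}$ and using $\Delta(x_i^\perp,s,\rho)^2=(u(s)-u(s+\rho))^2\le \Mi{s}{s+\rho}{i}$ (Lemma \ref{lemma:pos0}), the surviving $(1-\tfrac{1}{4d})\Delta^2$ piece in the integrand of $R$ is absorbed into $(1-\tfrac{1}{4d})\Mi{s}{s+\rho}{i}$, leaving
\[
R_{i,\tau,\eps}+V_{i,\tau}\ \ge\ -\overline{\mathcal M}^i_{\at}(u,x_i^\perp,I)\ +\ \frac{1}{4d}\!\int_I\!\!\int_\R\!\!\iint_{\Omega(a,b)}\!\!G^{-1}(\rho)\Mi{s}{s+\rho}{i}\widehat K_\tau(\rho)\,ds\,d\rho\,db\,da\ -\ \mathrm{err}(\sigma).
\]
Restricting the inner integral to $|\rho|\le\eta_0$ and to $a$ at distance $\ge\eta_0$ from $\partial I$ (the $\eta_0$ buffer in the definition of $I$ makes this harmless), one lower-bounds the remaining integral by $c\bigl(\int_{\tau^{1/\beta}}^{\eta_0}|\rho|\widehat K_\tau(\rho)\,d\rho\bigr)\overline{\mathcal M}^i_{\at}(u,x_i^\perp,I')$ for some interior $I'\subset I$; the gap $\overline{\mathcal M}^i_{\at}(I)-\overline{\mathcal M}^i_{\at}(I')$ is controlled by a constant via Corollary \ref{cor:pos0} (otherwise $R$ itself is already nonnegative). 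Since $\int_{\tau^{1/\beta}}^{\eta_0}|\rho|\widehat K_\tau(\rho)\,d\rho$ diverges like $\tau^{-1}$ as $\tau\to 0$, one fixes $\tau_3$ small enough that the coefficient of $\overline{\mathcal M}^i_{\at}$ exceeds $1$, and then $\bar\sigma$ small enough that $\mathrm{err}(\sigma)$ is dominated, yielding $R+V\ge 0$. Equality along the chain forces $\Delta(x_i^\perp,s,\rho)\equiv 0$, i.e.\ $s\mapsto u(x_i^\perp+se_i)$ is constant, so $u=u(x_i^\perp)$.

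The main technical obstacle is producing pointwise-in-$(s,\rho)$ control of the bad-$y_i^\perp$ remainder against the singular kernel $K_\tau$: the bare $L^1$-closeness of $u$ to $\chi_F$ on $Q_l(z)$ is slice-wise weak, and matching the Fubini scale to $\eta$, upgrading via the Modica--Mortola density and \eqref{eq:omegaab}, and invoking Lemma \ref{lemma:int14} to exclude thin-region persistent oscillations must all be orchestrated so as not to spoil the $\tau^{-1}$ gain coming from the $\eta_0$-buffered interior integration.
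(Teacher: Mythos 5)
Your overall strategy is genuinely different from the paper's, and unfortunately it has a gap at the one step you flag as the main technical obstacle. Your plan is to expand $f_u=(A-B)^2\ge\tfrac12 A^2-B^2$ (with $A=\Delta(x_i^\perp,s,\rho)$, $B=\Delta(x_i^\perp+y_i^\perp,s,\rho)$), integrate against $K_\tau$, and then argue that the subtracted term $\int_{\R^{d-1}}B^2K_\tau(\rho e_i+y_i^\perp)\,dy_i^\perp$ is an error $\mathrm{err}(\sigma)$ that can be made small uniformly in $\tau$ (and $l,L$). This is not achievable from the hypothesis. The hypothesis $D^j_\eta(u,Q_l(z))\le\sigma$ only gives $L^1$-closeness of $u$ to a stripe set orthogonal to $e_j$, and $\int B^2K_\tau$ is a weighted $L^2$-quantity in which the kernel $K_\tau$ concentrates near $y_i^\perp=0$ at scale $\tau^{1/\beta}$. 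Even after removing a good set of $y_i^\perp$ of measure $1-O(\sqrt\sigma)$, you have no pointwise or $L^2$ control on the bad set, and if it sits near $y_i^\perp=0$ the singular kernel makes the contribution blow up as $\tau\to 0$. Since your final step explicitly requires $\tau$ small enough that $\int_{\tau^{1/\beta}}^{\eta_0}|\rho|\widehat K_\tau(\rho)\,d\rho\gg 1$, you would need $\mathrm{err}(\sigma)$ to be small \emph{uniformly as $\tau\to 0$}, which is precisely what cannot be extracted. Citing Lemma~\ref{lemma:int14} does not help: it is a \emph{lower} bound on a $\frac14$-gap integral, not an upper bound on $\int B^2K_\tau$, so it cannot ``rule out'' concentration of the remainder.

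The paper avoids this entirely by working locally and never trying to dominate $\overline{\mathcal M}^i_{\at}$ by a $\tau$-diverging coefficient. It partitions $(z_i-l/2+\eta_0,z_i+l/2-\eta_0)$ into $\eta_0$-intervals, applies Corollary~\ref{cor:pos} on each interval where $R_{i,\tau,\eps}<0$ to obtain both the lower bound $R\ge-\Upsilon$ and the existence of a near-unit jump $|u_{x_i^\perp}(\bar s)-u_{x_i^\perp}(\bar t)|\ge1-\delta$ localized in a $\delta_0$-window, then uses \eqref{eq:381}--\eqref{eq:581} to get $u_{x_i^\perp}\ge\tfrac58$ to the left and $\le\tfrac38$ to the right. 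On $(s,\rho)\in\Omega(\bar s,\bar t)$ with $\tfrac{\eta_0}{8}\le|\rho|\le\tfrac{\eta_0}{4}$ this yields the pointwise inequality $f_u\ge\bigl(\tfrac14-[u(x_i^\perp+\zeta_i^\perp+se_i)-u(x_i^\perp+\zeta_i^\perp+(s+\rho)e_i)]\bigr)^2$, and \emph{then} Lemma~\ref{lemma:int14} gives $V_{i,\tau}\gtrsim 1/\alpha$ on that block, uniformly in $\tau$. Since $\Upsilon$ is fixed and $\alpha$ is at our disposal (independent of $\tau$), $-\Upsilon+c/\alpha>0$ and the blocks sum to give \eqref{eq:r+v}. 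Nothing in this argument requires $\tau\to0$; the smallness of $\tau_3$ only ensures $\alpha\ge\tau_3^{1/\beta}$ so the kernel denominator stays comparable to $\alpha$. You may wish to restructure your proof along these lines, using Corollary~\ref{cor:pos} to localize the jump and Lemma~\ref{lemma:int14} as a lower bound on $V$ rather than as an upper bound on a remainder term.
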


\begin{proof}
	Let $z_i-l/2+\eta_0=t_0<t_1<\dots<t_N=z_i+l/2-\eta_0$ be a partition of $(z_i-l/2+\eta_0,z_i+l/2-\eta_0)$ into intervals of lenght $\eta_0$ (with possibly $|t_N-t_{N-1}|\leq\eta_0$). Let 
	\[
	\mathcal K(Q^i_{l+2\eta_0}(z))=\{k\in\{1,\dots,N\}:\,R_{i,\eps,\tau}(u,x_i^\perp,[t_k,t_{k+1}))<0\}.
	\]
	By Corollary \ref{cor:pos}, for every $k\in\mathcal K(Q^i_{l+2\eta_0}(z))$ one has the following:
	\begin{itemize}
		\item $\Mi{t_k-\eta_0/2}{t_{k+1}+\eta_0/2}{i}\leq\Upsilon$\\
		\item $R_{i,\eps,\tau}(u,x_i^\perp,[t_k,t_{k+1}))\geq-\Upsilon$\\
		\item $\exists\,\bar s<\bar t\in[t_k-\delta_0,t_{k+1}+\delta_0]$ with $|\bar s-\bar t|\leq\delta_0$ and $|u_{x_i^\perp}(\bar s)-u_{x_i^\perp}(\bar t)|>1-\delta$.
			\end{itemize}
		 In particular, assuming w.l.o.g. that $u_{x_i^\perp}(\bar s)>u_{x_i^\perp}(\bar t)$, since $\delta_0\ll\eta_0/8$ and $\Mi{t_k-\eta_0/2}{t_{k+1}+\eta_0/2}{i}\leq\Upsilon$, by \eqref{eq:381} and \eqref{eq:581}  one has that, taking $\delta$ sufficiently small,
		\begin{align}
		u_{x_i^\perp}&\geq\frac58\quad\text{on $[t_k-\eta_0/2,\bar s]$},\label{eq:582}\\
		u_{x_i^\perp}&\leq\frac38\quad\text{on $[\bar t,t_{k+1}+\eta/2]$}.\label{eq:382}
		\end{align}

		Hence, using the positivity of $f_u$, Fubini Theorem and the fact that
			\begin{equation}\label{eq:tkk}
		\int_{t_k-\eta_0/2}^{t_{k+1}+\eta_0/2}\int_{\R}\chi_{\{(a,b):(s,\rho)\in\Omega(a,b)\}}(a,b)\da\db=|\rho||[t_k-\eta_0/2,t_{k+1}+\eta_0/2]\cap[s,s+\rho]|,
		\end{equation}
		one has that
		\begin{align*}
		V_{i,\tau}(u,x_i^\perp,&[t_k-\eta_0/2,t_{k+1}+\eta_0/2))\geq\frac{1}{2d}\int_{t_k-\eta_0/2}^{t_{k+1}+\eta_0/2}\int_{\R}\iint_{(s,\rho)\in\Omega(a,b)\cap\Omega(\bar s,\bar t)}\int_{\R^{d-1}}G^{-1}(\rho)\cdot\notag\\
		&\cdot f_u(x_i^\perp,s,\zeta_i^\perp,s+\rho)K_\tau(\rho e_i+\zeta_i^\perp)\d\zeta_i^\perp\ds\d\rho\db\da\notag\\
		&\geq \frac{1}{2d}\iint_{(s,\rho)\Omega(\bar s,\bar t), \{\frac{\eta_0}{8}\leq|\rho|\leq\frac{\eta_0}{4}\}}\frac{|[t_k-\eta_0/2,t_{k+1}+\eta_0/2]\cap[s,s+\rho]|}{|\rho|}\cdot\notag\\
		&\cdot\int_{\R^{d-1}}f_u(x_i^\perp,s,\zeta_i^\perp,s+\rho)K_\tau(\rho e_i+\zeta_i^\perp)\d\zeta_i^\perp\ds\d\rho\notag\\
		&\geq\frac{1}{16d}\iint_{(s,\rho)\Omega(\bar s,\bar t), \{\frac{\eta_0}{8}\leq|\rho|\leq\frac{\eta_0}{4}\}}\int_{\R^{d-1}}f_u(x_i^\perp,s,\zeta_i^\perp,s+\rho)K_\tau(\rho e_i+\zeta_i^\perp)\d\zeta_i^\perp\ds\d\rho.
		\end{align*}
		
		Now observe that, for $(s,\rho)\in\Omega(\bar s,\bar t)$ with $\frac{\eta_0}{8}\leq|\rho|\leq\frac{\eta_0}{4}$, due to \eqref{eq:582} and \eqref{eq:382} one has that
		\begin{equation*}
		f_u(x_i^\perp,s,\zeta_i^\perp,s+\rho)\geq\Big(\frac{1}{4}- \big[ u(x_i^\perp + \zeta_i^\perp + se_i ) - u(x_i^\perp + \zeta_i^\perp + (s+\rho)e_i)  \big]\Big)^2.
		\end{equation*}
		Hence, 
		\begin{align*}
		\frac{1}{16d}&\iint_{(s,\rho)\Omega(\bar s,\bar t), \{\frac{\eta_0}{8}\leq|\rho|\leq\frac{\eta_0}{4}\}}\int_{\R^{d-1}}f_u(x_i^\perp,s,\zeta_i^\perp,s+\rho)K_\tau(\rho e_i+\zeta_i^\perp)\d\zeta_i^\perp\ds\d\rho\notag\\
		&\geq\frac{1}{16d}\int_{\bar s-\alpha}^{\bar s}\int_{\bar t-s}^{\bar t-s+\alpha}\int_{|\zeta_i^\perp|<\alpha}\Big(\frac{1}{4}- \big[ u(x_i^\perp + \zeta_i^\perp + se_i ) - u(x_i^\perp + \zeta_i^\perp + (s+\rho)e_i)  \big]\Big)^2K_\tau(\rho e_i+\zeta_i^\perp)\d\zeta_i^\perp\d\rho\ds\notag\\
		&\geq\frac{1}{16d}\int_{\bar s-\alpha}^{\bar s}\int_{\bar t-s}^{\bar t-s+\alpha}\int_{|\zeta_i^\perp|<\alpha}\frac{\Big(\frac{1}{4}- \big[ u(x_i^\perp + \zeta_i^\perp + se_i ) - u(x_i^\perp + \zeta_i^\perp + (s+\rho)e_i)  \big]\Big)^2}{(3\alpha+\delta_0+\tau^{1/\beta})^p}\d\zeta_i^\perp\d\rho\ds
		\end{align*}
	where $\alpha<\eta_0/4$, and in the last inequality we used that $|\rho|\leq2\alpha+\delta_0$. 
	
	By Lemma \ref{lemma:int14}, if $\delta_0\leq\alpha$ (as in this case, since $\delta_0\ll\eta_0/8$) and $\sigma$ is sufficiently small, 
	\begin{align*}
	\frac{1}{16d}\int_{\bar s-\alpha}^{\bar s}\int_{\bar t-s}^{\bar t-s+\alpha}&\int_{|\zeta_i^\perp|<\alpha}\frac{\Big(\frac{1}{4}- \big[ u(x_i^\perp + \zeta_i^\perp + se_i ) - u(x_i^\perp + \zeta_i^\perp + (s+\rho)e_i)  \big]\Big)^2}{(3\alpha+\delta_0+\tau^{1/\beta})^p}\d\zeta_i^\perp\d\rho\ds>\notag\\
	&>\frac{\alpha^{d+1}}{8\cdot16d(3\alpha+\delta_0+\tau^{1/\beta})^p}.
	\end{align*} 
	Then, assuming that $0<\tau\leq\tau_3$ is such that $\alpha\geq\tau_3^{1/\beta}$ one has that $1/(3\alpha+\delta_0+\tau^{1/\beta})^p\geq1/(5\alpha)^p$ and thus since $p\geq d+2$ one has that 
	\begin{align*}
		V_{i,\tau}(u,x_i^\perp,&[t_k-\eta_0/2,t_{k+1}+\eta_0/2))\geq\frac{1}{8\cdot16d\cdot 5^p\alpha}.
	\end{align*}	
	To conclude, we observe that
	\begin{align*}
	R_{i,\tau,\eps}(u,x_i^\perp,&[t_k-\eta_0/2,t_{k+1}+\eta_0/2))+	V_{i,\tau}(u,x_i^\perp,[t_k-\eta_0/2,t_{k+1}+\eta_0/2))\geq-\Upsilon+\frac{1}{8\cdot 16d\cdot5^p\alpha}
	\end{align*}
	and the r.h.s. of the above inequality is strictly positive provided $\alpha$ is chosen such that
	\[
	-\Upsilon+\frac{1}{8\cdot16d\cdot 5^p\alpha}>0.
	\]
	Thus, \eqref{eq:r+v} holds. {Moreover, since whenever $R_{i,\tau,\eps}<0$ then by the above $R_{i,\tau,\eps}+V_{i,\tau}>0$, one has that equality in \eqref{eq:r+v} holds if and only if $R_{i,\tau,\eps}=0$ and $V_{i,\tau}=0$, which implies $u=u(x_i^\perp)$. }

\end{proof}

In the following lemma we estimate from below the contributions on intervals  of slices in directions $e_i$ to the energy $\bar F_{i,\tau,\eps}$. In order to obtain such estimates, we will use the various lemmas and corollaries proved in this section. 

  \begin{lemma}\label{lemma:stimalinea}
	Let $\bar\sigma$, ${\tau_3}>0$ as in Lemma \ref{lemma:stability}. Let $\sigma\leq\bar{\sigma}$, $\tau\leq{\tau_3}$ and $l>C_0/(-C^*_{\tau,\eps})$, where $C_0$ is the constant appearing in Lemma \ref{lemma:1dopt}.  Let $z\in[0,L)^d$ and let $\bar\eta>0$.

	The following statements hold: there exists $M_0$ constant independent of $l$ (but depending on the dimension) such that  
	\begin{enumerate}[(i)]
		\item Let $J\subset \R$ an interval  such that for every $s\in J$ one has that  $D^{j}_{\bar\eta}(u,Q_{l}(z_i^\perp+se_i))\leq \sigma$ with $j\neq i$. 
		Then
		\begin{equation}
		\label{eq:gstr20}
		\begin{split}
		\int_{J} \bar{F}_{i,\tau,\eps}(u,Q_{l}(z^{\perp}_{i}+se_i))\ds \geq - \frac{M_{0}}{l}.
		\end{split}
		\end{equation}
		Moreover, if $J = [0,L)$, then 
		\begin{equation}
		\label{eq:gstr21}
		\begin{split}
		\int_{J} \bar{F}_{i,\tau,\eps}(u,Q_{l}(z^{\perp}_{i}+se_i))\ds \geq0
		\end{split}
		\end{equation}
{	and equality holds only if $u=u(x_i^\perp)$. }
		\item Let $J = (a,b)\subset \R$. 
		If for $s=a$ and $s=b$ it holds $D_{\bar \eta}^j(u,Q_{l}(z^{\perp}_i+se_i)) \leq \sigma$ with $j\neq i$, then 
		\begin{equation}
		\label{eq:gstr27}
		\begin{split}
		\int_{J} \bar{F}_{i,\tau,\eps}(u,Q_{l}(z^{\perp}_{i}+se_i))\ds \geq | J| C^{*}_{\tau,\eps} -\frac{M_0} l,
		\end{split}
		\end{equation}
		otherwise
		\begin{equation}
		\label{eq:gstr36}
		\begin{split}
		\int_{J} \bar{F}_{i,\tau,\eps}(u,Q_{l}(z^{\perp}_{i}+se_i))\ds \geq | J| C^{*}_{\tau,\eps} - M_0l.
		\end{split}
		\end{equation}
		Moreover, if $J = [0,L)$, then
		\begin{equation}
		\label{eq:gstr28}
		\begin{split}
		\int_{J} \bar{F}_{i,\tau,\eps}(u,Q_{l}(z^{\perp}_{i}+se_i))\ds \geq | J| C^{*}_{\tau,\eps}.
		\end{split}
		\end{equation}
	\end{enumerate}
\end{lemma}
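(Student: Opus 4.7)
The plan is to reduce each bound to slice-wise estimates over $x_i^\perp \in Q_l^\perp(z_i^\perp)$ and combine the lemmas of this section via a Fubini-type bookkeeping. First, I would drop the manifestly nonnegative terms in \eqref{eq:3.9}: the term involving $W_{i,\tau}$, the Modica--Mortola remainder on $\{\nabla u = 0\}$, and (for part (i)) also $V_{i,\tau}\geq 0$. What remains in the integrand is $R_{i,\tau,\eps}+V_{i,\tau}$ (for part (i)) or $R_{i,\tau,\eps}$ alone (for part (ii)), computed on centered intervals $(s-l/2, s+l/2)$ and averaged over $x_i^\perp$.

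For part (i), since $R_{i,\tau,\eps}$ and $V_{i,\tau}$ are additive in their interval argument, the hypothesis $D^j_{\bar\eta}(u,Q_l(z_i^\perp+se_i))\leq\sigma$ together with Lemma \ref{lemma:stability} yields $(R_{i,\tau,\eps}+V_{i,\tau})(u,x_i^\perp,(s-l/2+\eta_0,s+l/2-\eta_0))\geq 0$ for every $s\in J$ and every $x_i^\perp\in Q_l^\perp(z_i^\perp)$. The missing two $\eta_0$-strips near each endpoint are controlled by Corollary \ref{cor:pos0}: wherever $R_{i,\tau,\eps}$ is negative on such a strip, the Modica--Mortola mass is at most $2\Upsilon$, so $R_{i,\tau,\eps}\geq -2\Upsilon$ there. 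After integrating over $s\in J$, swapping with the transverse integral, and dividing by $l^d$, these contributions produce an error of size $O(1/l)$, giving \eqref{eq:gstr20}. For $J=[0,L)$ the integrand in $s$ is $L$-periodic, the boundary $\eta_0$-strips get absorbed by shifting, and stability extends to the full period via a covering argument, yielding \eqref{eq:gstr21}; the equality statement in Lemma \ref{lemma:stability} then forces $R_{i,\tau,\eps}=V_{i,\tau}=0$ identically, hence $u=u(x_i^\perp)$.

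For part (ii), I would apply Lemma \ref{lemma:1dopt} on the full interval $(s-l/2,s+l/2)$ slice-by-slice, getting $R_{i,\tau,\eps}\geq C^*_{\tau,\eps}\,l - C_0$, then integrate over $s\in J$ and average over $x_i^\perp$. The Fubini rearrangement produces the main term $|J|\,C^*_{\tau,\eps}$, while the accumulated $-C_0$ terms contribute a correction whose scale depends on the endpoint data. When $D^j_{\bar\eta}\leq\sigma$ holds at both endpoints of $J$, Lemma \ref{lemma:stability} applied in $\eta_0$-neighborhoods of $s=a$ and $s=b$ refines the accumulation and trims the correction to $-M_0/l$, giving \eqref{eq:gstr27}; without this endpoint control, the coarser bound \eqref{eq:gstr36} with $-M_0 l$ follows directly from the unimproved $-C_0$ constants. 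For $J=[0,L)$, applying Lemma \ref{lemma:1dopt} on the whole periodic slice lets one exploit the periodic extension built into that lemma to avoid the $-C_0$ constant altogether, yielding \eqref{eq:gstr28}.

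The main technical obstacle will be the Fubini bookkeeping that reconciles the center-indexed quantity $\int_J R_{i,\tau,\eps}(\cdot,(s-l/2,s+l/2))\,ds$ with slice functionals on a fixed interval: one has to verify that the boundary layer of $J$ of width $\sim l$ genuinely contributes an error of order $1/l$ (not $l$) in the favorable cases, and that $L$-periodicity eliminates all such boundary contributions when $J=[0,L)$. Once this calculus is set up, the three regimes are obtained by choosing, in each case, whether to apply stability (Lemma \ref{lemma:stability}) or one-dimensional optimization (Lemma \ref{lemma:1dopt}) on the interior, with Corollary \ref{cor:pos0} providing the residual control on the small $\eta_0$-strips.
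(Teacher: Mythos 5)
Your outline correctly identifies the ingredients (Lemma \ref{lemma:stability} for part (i), Lemma \ref{lemma:1dopt} for part (ii), Corollary \ref{cor:pos} for the residual $\eta_0$-strips), and you also flag the ``Fubini bookkeeping'' as the crux — but you do not actually carry it out, and the argument you do write down would fail to produce the claimed bounds. The slice-by-slice plan in your second paragraph — apply Lemma \ref{lemma:stability} on $(s-l/2+\eta_0,s+l/2-\eta_0)$ for each $s\in J$, control the two $\eta_0$-strips by Corollary \ref{cor:pos}, then integrate in $s$ — gives, for each fixed $s$, a lower bound of order $-\Upsilon/l$ for $\bar F_{i,\tau,\eps}(u,Q_l(z_i^\perp+se_i))$. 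Integrating this over $s\in J$ accumulates to $-C|J|/l$, which is nothing like $-M_0/l$ once $|J|$ is large; the same accumulation of $-C_0$ constants appears in your sketch of part (ii). So the ``error of size $O(1/l)$'' you claim does not come out of the steps you describe.

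The device that actually removes this accumulation, and which the paper uses, is a Fubini exchange of the cube-center variable $s$ with the inner slice variable $a$. Writing $R_{i,\tau,\eps}(u,x_i^\perp,(x_i,y_i))=\int_{x_i}^{y_i}r_{i,\tau,\eps}(u,x_i^\perp,a)\,\mathrm{d}a$ and similarly for $V_{i,\tau}$, one obtains
\begin{equation*}
\int_J\int_{s-l/2}^{s+l/2}\bigl(r_{i,\tau,\eps}+v_{i,\tau}\bigr)(a)\,\mathrm{d}a\,\mathrm{d}s
=\int_{-l/2}^{l'+l/2}\bigl|[a-l/2,a+l/2]\cap[0,l']\bigr|\,\bigl(r_{i,\tau,\eps}+v_{i,\tau}\bigr)(a)\,\mathrm{d}a
\end{equation*}
(taking $J=[0,l']$). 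Only after this rearrangement is the boundary contribution supported on a set of size $O(\eta_0)$ rather than $O(|J|)$: the weight $|[a-l/2,a+l/2]\cap[0,l']|/l$ is at most $\eta_0/l$ on the outermost strips $[-l/2,-l/2+\eta_0]\cup[l'+l/2-\eta_0,l'+l/2]$ and is bounded by $1$ everywhere, which yields the $-2\Upsilon\eta_0/l$ estimate.

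There is a second, more subtle, omission in part (i). After the rearrangement the $V$-term is downweighted by a factor of order $\eta_0/l$ on the $\eta_0$-intervals where $R$ is negative, whereas the $R$-term is only bounded by $-\Upsilon$. So the stability inequality needed at this stage is not $R_{i,\tau,\eps}+V_{i,\tau}\geq0$ from Lemma \ref{lemma:stability} as stated, but the strengthened $-\Upsilon+\frac{\eta_0}{l}V_{i,\tau}>0$; this requires re-running the argument of Lemma \ref{lemma:stability} with $\alpha$ chosen small enough in terms of the \emph{fixed} ratio $\eta_0/l$, adjusting $\bar\sigma,\tau_3$ accordingly. A direct quote of Lemma \ref{lemma:stability} is not enough. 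The periodicity statement for $J=[0,L)$ also follows cleanly from the same Fubini exchange — the weight is identically $l$ by periodicity, so the boundary strips disappear — rather than from a covering or shifting argument.
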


\begin{proof} Given Lemmas \ref{lemma:rpos1}-\ref{lemma:stability}, the proof proceeds in a way similar to that followed in Lemma 7.9 of \cite{DR}.

	Let us first prove (i). For simplicity of notation, we may assume  without loss of generality that $a=0$ and $b=l'$, namely $J= [0,l')$.   For any $x_i,y_i\in[0,L)$ we also set
	\begin{align*}
	R_{i,\tau,\eps}(u,x_i^\perp,(x_i,y_i))&=\int_{x_i}^{y_i}r_{i,\tau,\eps}(u,x_i^\perp,a)\da,\notag\\
		V_{i,\tau}(u,x_i^\perp,(x_i,y_i))&=\int_{x_i}^{y_i}v_{i,\tau}(u,x_i^\perp,a)\da.
	\end{align*}
	 
	From the definition of $\bar{F}_{i,\tau,\eps}$ given in \eqref{eq:fbari} and since $W_{i,\tau}\geq0 $, we have that
	
	\begin{equation}
	\label{eq:gstr25}
	\begin{split}
	\int_{J} \bar{F}_{i,\tau,\eps}(u,Q_{l}(z^{\perp}_i+se_i)) \ds &\geq  \frac{1}{l^{d}} 
	\int_{J} \int_{Q^{\perp}_{l}(z^{\perp}_{i})} \Bigl\{R_{i,\tau,\eps}(u,x_i^\perp,Q_l^i(s))+V_{i,\tau}(u,x_i^\perp,Q_l^i(s))\Bigr\} \dx^{\perp}_{i} \ds \\ 
	&=  \frac{1}{l^{d}} 
	\int_{J} \int_{Q^{\perp}_{l}(z^{\perp}_{i})}\int_{s-l/2}^{s+l/2}\bigl\{r_{i,\tau,\eps}(u,x_i^\perp,a)+v_{i,\tau}(u,x_i^\perp,a)\bigr\}\da \dx^{\perp}_{i} \ds \\
	&= \frac{1}{l^{d-1}} \int_{Q^{\perp}_{l}(z^{\perp}_{i})} \int_{-l/2}^{l'+l/2}\frac{\bigl|[a-l/2,a+l/2]\cap[0,l']\bigr|}{l}\bigl\{r_{i,\tau,\eps}(u,x_i^\perp,a)+v_{i,\tau}(u,x_i^\perp,a)\bigr\}\da,
	\end{split}
	\end{equation}
	where in order to obtain the last line we used Fubini Theorem. 
	
	Let us  now estimate the last term in \eqref{eq:gstr25}.
	
	One has that
	\begin{align*}
	&\frac{1}{l^{d-1}} \int_{Q^{\perp}_{l}(z^{\perp}_{i})} \int_{-l/2}^{l'+l/2}\frac{\bigl|[a-l/2,a+l/2]\cap[0,l']\bigr|}{l}\bigl\{r_{i,\tau,\eps}(u,x_i^\perp,a)+v_{i,\tau}(u,x_i^\perp,a)\bigr\}\da\notag\\
	&=\frac{1}{l^{d-1}} \int_{Q^{\perp}_{l}(z^{\perp}_{i})} \int_{[-l/2,-l/2+\eta_0]\cup[l'+l/2-\eta_0,l'+l/2]}\frac{\bigl|[a-l/2,a+l/2]\cap[0,l']\bigr|}{l}\bigl\{r_{i,\tau,\eps}(u,x_i^\perp,a)+v_{i,\tau}(u,x_i^\perp,a)\bigr\}\da\notag\\
	&	+\frac{1}{l^{d-1}} \int_{Q^{\perp}_{l}(z^{\perp}_{i})} \int_{-l/2+\eta_0}^{l'+l/2-\eta_0}\frac{\bigl|[a-l/2,a+l/2]\cap[0,l']\bigr|}{l}\bigl\{r_{i,\tau,\eps}(u,x_i^\perp,a)+v_{i,\tau}(u,x_i^\perp,a)\bigr\}\da\notag\\
	&\geq-2\Upsilon\frac{\eta_0}{l}+\frac{1}{l^{d-1}} \int_{Q^{\perp}_{l}(z^{\perp}_{i})} \int_{-l/2+\eta_0}^{l'+l/2-\eta_0}\frac{\bigl|[a-l/2,a+l/2]\cap[0,l']\bigr|}{l}\bigl\{r_{i,\tau,\eps}(u,x_i^\perp,a)+v_{i,\tau}(u,x_i^\perp,a)\bigr\}\da,
	\end{align*}
where in the last inequality we used \eqref{eq:Rgamma}.
	As in Lemma \ref{lemma:stability}, one can now partition $[-l/2+\eta_0,l'+l/2-\eta_0]$ in intervals of size $\eta_0$ and notice that on the intervals $[t_k,t_{k+1})$ of this partition such that $R_{i,\tau,\eps}(u,x_i^\perp,[t_k,t_{k+1}))=\int_{t_k}^{t_{k+1}}r_{i,\tau,\eps}(u,x_i^\perp,a)\da<0$, then 
	\begin{equation*}
	\int_{t_k}^{t_{k+1}}\frac{\bigl|[a-l/2,a+l/2]\cap[0,l']\bigr|}{l}r_{i,\tau,\eps}(u,x_i^\perp,a)\da\geq-\Mi{t_k}{t_{k+1}}{i}\geq-\Upsilon. 
	\end{equation*}
	On the other hand, 
	\begin{align*}
	\int_{t_k}^{t_{k+1}}\frac{\bigl|[a-l/2,a+l/2]\cap[0,l']\bigr|}{l}v_{i,\tau}(u,x_i^\perp,a)\da&\geq \frac{\eta_0}{l}\int_{t_k}^{t_{k+1}}v_{i,\tau}(u,x_i^\perp,a)\da\notag\\
	&\geq\frac{\eta_0}{l} V_{i,\tau}(u,x_i^\perp,[t_k,t_{k+1})),
	\end{align*}
	and provided $\bar{\sigma}$ and $\tau_3$ are sufficiently small, since for every $s\in [0,l']$ one has that  $D^{j}_{\bar\eta}(u,Q_{l}(z_i^\perp+se_i))\leq \sigma$ with $j\neq i$, as in Lemma \ref{lemma:stability} one has that
	\[
	-\Upsilon+\frac{\eta_0}{l} V_{i,\tau}(u,x_i^\perp,[t_k,t_{k+1}))>0,
	\]	
	thus proving \eqref{eq:gstr20} with $M_0=\frac{2\Upsilon\eta_0}{l}$.

	Let us now prove  \eqref{eq:gstr21}. In this case, by periodicity of $u$ w.r.t. $[0,L)^d$ and Fubini Theorem,
	\begin{equation*}
	\begin{split}
	\int_{0}^L \bar{F}_{i,\tau,\eps}(u,Q_{l}(z^{\perp}_i+se_i)) \ds &\geq  \frac{1}{l^{d}} 
	\int_{0}^L \int_{Q^{\perp}_{l}(z^{\perp}_{i})}\Bigl\{R_{i,\eps,\tau}(u,x_i^\perp, Q_l^i(s))+V_{i,\tau}(u,x_i^\perp, Q_l^i(s))\Bigr\}\dx^{\perp}_{i} \ds \\ 
 &= \frac{1}{l^{d-1}} \int_{Q^{\perp}_{l}(z^{\perp}_{i})}\Bigl\{R_{i,\eps,\tau}(u,x_i^\perp, [0,L))+V_{i,\tau}(u,x_i^\perp, [0,L))\Bigr\}\dx^{\perp}_{i}.
	\end{split}
	\end{equation*}

	Hence \eqref{eq:gstr21} holds since we can apply directly Lemma \ref{lemma:stability} without having points close to the boundary.

	Let us now prove (ii).  W.l.o.g. let us assume that $J= (0,l')$.  
	
	As in \eqref{eq:gstr25} one has that 
	
	\begin{align*}
	\int_{J}& \bar{F}_{i,\tau,\eps}(u,Q_{l}(z^{\perp}_i+se_i)) \ds \geq  \frac{1}{l^{d}} 
	\int_{J} \int_{Q^{\perp}_{l}(z^{\perp}_{i})} \Bigl\{R_{i,\tau,\eps}(u,x_i^\perp,Q_l^i(s))+V_{i,\tau}(u,x_i^\perp,Q_l^i(s))\Bigr\} \dx^{\perp}_{i} \ds \notag\\ 
	&= \frac{1}{l^{d-1}} \int_{Q^{\perp}_{l}(z^{\perp}_{i})} \int_{-l/2}^{l'+l/2}\frac{\bigl|[a-l/2,a+l/2]\cap[0,l']\bigr|}{l}\bigl\{r_{i,\tau,\eps}(u,x_i^\perp,a)+v_{i,\tau}(u,x_i^\perp,a)\bigr\}\da\notag\\
	&=\frac{1}{l^{d-1}} \int_{Q^{\perp}_{l}(z^{\perp}_{i})} \int_{[-l/2,l/2]\cup[l'-l/2,l'+l/2]}\frac{\bigl|[a-l/2,a+l/2]\cap[0,l']\bigr|}{l}\bigl\{r_{i,\tau,\eps}(u,x_i^\perp,a)+v_{i,\tau}(u,x_i^\perp,a)\bigr\}\da\notag\\
	&	+\frac{1}{l^{d-1}} \int_{Q^{\perp}_{l}(z^{\perp}_{i})} \int_{l/2}^{l'-l/2}\bigl\{r_{i,\tau,\eps}(u,x_i^\perp,a)+v_{i,\tau}(u,x_i^\perp,a)\bigr\}\da.
	\end{align*}

As in the proof of \eqref{eq:gstr20}, if for $s=0$ and $s=l'$ it holds $D^j_{\bar{\eta}}(u,Q_l(z_i^\perp+se_i))\leq\sigma$, then  one has that
\begin{align}
\frac{1}{l^{d-1}} &\int_{Q^{\perp}_{l}(z^{\perp}_{i})} \int_{[-l/2,l/2]\cup[l'-l/2,l'+l/2]}\frac{\bigl|[a-l/2,a+l/2]\cap[0,l']\bigr|}{l}\bigl\{r_{i,\tau,\eps}(u,x_i^\perp,a)+v_{i,\tau}(u,x_i^\perp,a)\bigr\}\da\notag\\
&\geq-4\frac{\Upsilon\eta_0}{l}.\label{eq:361}
\end{align}

On the other hand, by Lemma \ref{lemma:1dopt} and the assumption $l>C_0/(-C^*_{\tau,\eps})$

\begin{align}
\frac{1}{l^{d-1}} &\int_{Q^{\perp}_{l}(z^{\perp}_{i})} \int_{l/2}^{l'-l/2}\bigl\{r_{i,\tau,\eps}(u,x_i^\perp,a)+v_{i,\tau}(u,x_i^\perp,a)\bigr\}\da\notag\\
&\geq\frac{1}{l^{d-1}} \int_{Q^{\perp}_{l}(z^{\perp}_{i})}R_{i,\tau,\eps}(u,x_i^\perp,(l/2,l'-l/2))\dx_i^\perp\notag\\
&\geq C^*_{\tau,\eps}|J|-lC^*_{\tau,\eps}-C_0\notag\\
&\geq C^*_{\tau,\eps}|J|.\label{eq:362}
\end{align}

Thus, \eqref{eq:gstr27} follows combining \eqref{eq:361} and \eqref{eq:362}. 

If instead either for $s=0$ or for  $s=l'$ it holds $D^j_{\bar{\eta}}(u,Q_l(z_i^\perp+se_i))>\sigma$, then we partition the intervals $[-l/2,l/2]\cup[l'-l/2,l'+l/2]$ into intervals of size $\eta_0$, on which by \eqref{eq:Rgamma} one has that  $R_{i,\tau,\eps}\geq-\Upsilon$. In this way we get

\begin{align}
\frac{1}{l^{d-1}} &\int_{Q^{\perp}_{l}(z^{\perp}_{i})} \int_{[-l/2,l/2]\cup[l'-l/2,l'+l/2]}\frac{\bigl|[a-l/2,a+l/2]\cap[0,l']\bigr|}{l}\bigl\{r_{i,\tau,\eps}(u,x_i^\perp,a)+v_{i,\tau}(u,x_i^\perp,a)\bigr\}\da\notag\\
&\geq-2\Upsilon \frac{l}{\eta_0},\label{eq:363}
\end{align}
being $\frac{l}{\eta_0}$ an upper bound for the number of disjoint  intervals of length $\eta_0$ inside an interval of length $l$.  

Thus, \eqref{eq:gstr36} follows from \eqref{eq:362} and \eqref{eq:363}.

	The proof of \eqref{eq:gstr28} proceeds using the $L$-periodicity of the contributions as done for \eqref{eq:gstr21}. 
	
\end{proof}

\section{Proof of Theorem \ref{thm:main}}\label{sec:mainthm} 

In this section we complete the proof of Theorem \ref{thm:main}, bringing together the lemmas of the previous section in order to show the optimality of one-dimensional periodic functions in a range of $\tau,\eps$ independent of $L$. This part of the proof follows closely the strategy adopted in Section 7 of \cite{DR}.

The sets defined in the proof and the main estimates will depend on a set of parameters $l,\Upsilon, \eta_0,\sigma,\rho,M,\hat{\eta},\tau$ and $\eps$. If suitably chosen, they lead to the proof Theorem \ref{thm:main}. 

Let us first specify how the parameters are chosen, and their dependence on each other. The reason for such choices will be clarified during the proof.

Let $0<-C^*<-C^*_{\tau,\eps}$ for all sufficiently small $\eps$ and $\tau$. Such a $C^*$ exists by the $\Gamma$-convergence of the functionals $\Fcal_{\tau,L,\eps}$ to a functional which is finite only on stripes as $\tau,\eps\to0$.  For the sharp interface problem (i.e., the $\Gamma$-limit as $\eps\to0$) one can also explicitly compute such constants and see this directly (see \cite{Ker}).

We fix a family of parameters as follows:
\begin{itemize}
		\item Let $1<\Upsilon\leq\frac{17}{16}$ and let $\eta_0=\eta_0(\Upsilon)$ as in Lemma \ref{lemma:rpos1}.
	
	\item  Then we  fix $l>0$ s.t.
	\begin{equation}\label{eq:lfix}
	%\label{eq:lfix2}
	l>\max \Big \{ \frac{dC(d,\Upsilon,\eta_0)}{-C^*}, \frac{C_0}{-C^*}\Big\}, 
	\end{equation}
	where $C(d,\Upsilon,\eta_0)$ is a constant (depending on $d$, $\Upsilon$ and $\eta_0$) that appears in \eqref{eq:toBeShown_integral}, and
	$C_0$ is the constant which appears in the statement of Lemma \ref{lemma:1dopt}.

	\item Choose $\nu=\frac{1}{l}$. 
	
	\item Let $\sigma\leq\min\bigl\{\sigma_0,\bar \sigma\bigr\}$ with $\sigma_0=\sigma_0(\nu)$ as in \eqref{eq:gsmstr2} and $\bar \sigma$ as in Lemma \ref{lemma:stability}.
	
		\item Thanks to Remark \ref{rmk:lip} (i), we then fix
	\begin{equation}\label{eq:rhofix}
		\rho\sim\sigma l. 
	\end{equation}
	in such a way that  we have that  for any $\eta$ the following holds
	\begin{equation}\label{eq:rhofix2}
		\forall\,z,z'\text{ s.t. }D_{\eta}(E,Q_l(z))\geq\sigma,\:|z-z'|_\infty\leq\rho\quad\Rightarrow\quad D_\eta(E,Q_l(z'))\geq\sigma/2.
	\end{equation}

		\item Then we fix $M$ such that
	\begin{equation}
		\label{eq:Mfix}
		\frac{M\rho}{2d}>M_0l, 
	\end{equation}
where $M_0$ is the constant appearing in Lemma \ref{lemma:stimalinea}.

	\item Finally, let $\tau>0$, $\eps>0$ satisfy the following
	\begin{align}
		&\tau\leq\tau_3\quad \text{ as in Lemma \ref{lemma:stability}}\\
		&\text{$\tau$ is such that Lemma \ref{lemma:stimaa-1} holds}\\
		&\tau\leq\hat{\tau},\,\eps\leq\hat{\eps}\text{ as in Proposition \ref{lemma:local_rigidity}}.
		\end{align}
	\item We fix also $\hat\eta=\hat \eta(M,l)$ as in Proposition \ref{lemma:local_rigidity}.

\end{itemize}

Given such parameters, let us prove Theorem \ref{thm:main} for any $L>l$ of the form $L=2kh^*_{\tau,\eps}$, with $k\in\N$.

% Since the minimizer of $\FtL$ is $L$-periodic it is convenient to think of $[0,L)^d$
Let $u$ be a minimizer of $\Fcal_{\tau,L,\eps}$. Since $u$ is $[0,L)^d$-periodic, we can consider $u$ as defined on $\T^d_L$, where $\T^d_L$ is the $d$-dimensional torus of size $L$. Thus the problem is naturally defined on the torus. Hence
with a slight abuse of notation, we will denote by $[0,L)^d$  the cube of size $L$ with the usual identification of the boundary.

{\bf Decomposition of $[0,L)^d$: }

We define
\begin{equation*}
\begin{split}
\tilde{A}_{0}:= \insieme{ z\in [0,L)^d:\ D_{\eta}(u,Q_{l}(z)) \geq \sigma }.
\end{split}
\end{equation*}
Hence, by Lemma \ref{lemma:local_rigidity}, for every $z\in \tilde{A}_{0}$ one has that $\bar{F}_{\tau,\eps}(u,Q_{l}(z)) > M$. 

Let us denote by $\tilde{A}_{-1}$ the set of points
\begin{equation*}
\begin{split}
\tilde{A}_{-1}: = \insieme{z\in [0,L)^d: \exists\, i,j \text{ with } i\neq j \text{ \st }\, D^{i}_{\eta} (u,Q_{l}(z))\leq\sigma , D^{j}_{\eta} (u,Q_{l}(z)) \leq \sigma }.
\end{split}
\end{equation*}
One can easily see that $\tilde A_0$ and $\tilde{A}_{-1}$ are closed. 

By the choice of $\rho$ made in \eqref{eq:rhofix}, \eqref{eq:rhofix2} holds, namely for every $z\in \tilde{A}_{0}$ and $|z- z' |_\infty\leq\rho$ one has that $D_{\eta}(u,Q_{l}(z')) > \sigma/2$.

Moreover, since $\sigma$ satisfies \eqref{eq:gsmstr2} with $\nu=\frac1l$, when $z\in \tilde{A}_{-1}$, then one has that \[\min \bigl\{\|u-1\|_{L^1(Q_l(z))},\,\|u\|_{L^1(Q_l(z))}\bigr\} \leq  l^{d-1}.
\]

Thus, using  Lemma~\ref{lemma:stimaa-1} with $\nu=1/l$, one has that
\begin{equation*}
\begin{split}
\bar{F}_{\tau,\eps}(u, Q_{l}(z)) \geq  -C_1\frac{\Upsilon d}{\eta_0l}.
\end{split}
\end{equation*}
Moreover, let now $z'$ such that $|z- z' |_\infty\leq 1$ with $z\in \tilde{A}_{-1}$. It is not difficult to see that if $\|u-1\|_{L^1(Q_l(z))} \leq l^{d-1}$ then $\|u-1\|_{L^1(Q_l(z'))} \leq\tilde C_d l^{d-1}$. Thus from Lemma~\ref{lemma:stimaa-1}, one has that
\begin{equation}
\label{eq:tildeC}
\begin{split}
\bar{F}_{\tau,\eps}(u, Q_{l}(z')) \geq -\frac{\hat C_d\Upsilon}{\eta_0l},
\end{split}
\end{equation}
where $\hat C_d=dC_1\tilde C_d$.

The above observations motivate the following definitions
\begin{align}
A_{0} &:= \insieme{ z' \in [0,L)^d: \exists\, z \in \tilde{A}_{0}\text{ with }|z-z'|_{\infty}  \leq \rho }\label{a0}\\
A_{-1} &:= \insieme{ z' \in [0,L)^d: \exists\, z \in \tilde{A}_{-1}\text{ with }|z-z' |_{\infty}  \leq 1 },\label{a1}
\end{align}

By the choice of the parameters and the observations above, for every $z\in A_{0}$ one has that $\bar{F}_{\tau,\eps}(u,Q_{l}(z)) > M$ and for every $z\in A_{-1}$, $\bar{F}_{\tau,\eps}(u,Q_{l}(z)) \geq-(\hat C_d\Upsilon)/(l\eta_0)$.

For simplicity of notation let us denote by $A:= A_{0}\cup A_{-1}$. 

The set $[0,L)^d\setminus A$ has the following property: for every $z\in [0,L)^d\setminus A$, there exists $i\in \{ 1,\ldots,d\}$ such that $D^{i}_{\eta}(u,Q_{l}(z)) \leq \sigma$ and for every $k\neq i$ one has that $D^{k}_{\eta}(u,Q_{l}(z)) > \sigma$.

Given  that $A$ is closed, we consider the connected components $\mathcal C_{1},\ldots,\mathcal C_{n}$ of $[0,L)^d\setminus A$.  The sets $\mathcal C_{i}$ are path-wise connected. 

Let us now show the following claim: given a connected component $\mathcal C_{j}$ one has that there exists  $i$ such that $D^{i}_{\eta}(u,Q_{l}(z)) \leq \sigma$ for every $z\in\mathcal  C_{j}$  and for every $k\neq i$ one has that $D^{k}_{\eta}(u,Q_{l}(z)) > \sigma$.  Indeed, suppose that there exists $z,z'\in\mathcal  C_{j}$  such that $D^{i}_{\eta}(u,Q_{l}(z)) \leq \sigma$ and $D^{k}_{\eta}(u,Q_{l}(z')) \leq \sigma$ with $i\neq k$ and take a continuous path $\gamma:[0,1]\to \mathcal C_{j}$ such that $\gamma(0) = z$ and $\gamma(1)= z'$.
From our hypothesis, we have that $\{ s: D^{i}_{\eta}(u,Q_{l}(\gamma(s)))\leq\sigma\} \neq \emptyset$ and there exists $\tilde{s}\in \partial \{s:\, D^{i}_{\eta}(u,Q_{l}(\gamma(s))) \leq \sigma \}\cap\partial\{s:\,D^{j}_{\eta}(u,Q_{l}(\gamma(s))) \leq \sigma\}$ for some $j\neq i$. Let $\tilde{z} = \gamma(\tilde{s})$. 
Thus there are points arbitrary close to $\tilde{z}$ in $\mathcal C_{j}$ such that $D^{j}_{\eta}(u,Q_{l}(\cdot)) \leq \sigma$ and $D^{i}_\eta(u,Q_{l}(\cdot)) \leq \sigma$. 
From the continuity of the maps $z\mapsto D^{i}_\eta(u,Q_{l}(z))$, $z\mapsto D^{j}_\eta(u,Q_{l}(z))$, we have that $\tilde{z}\in A_{-1} $, which contradicts our assumption. 
We will say that $\mathcal C_j$ is oriented in direction $e_i$ if there is a point in $z\in \mathcal C_j$ such that $D^{i}_\eta(u,Q_{l}(z)) \leq \sigma$. 
Because of the above being oriented along direction $e_{i}$ is well-defined.

We will denote by $A_{i}$ the union of the connected  components $\mathcal C_{j}$ such that $\mathcal C_{j}$ is oriented along the direction $e_{i}$.

Let us now summarize the important properties that will be used in the following

\begin{enumerate}[(i)]
	\item The sets $A=A_{-1}\cup A_{0}$, $A_{1}$, $A_{2}$, $\ldots, A_d$  form a partition of $[0,L)^d$. 
	\item The sets $A_{-1}, A_{0}$ are closed and $A_{i}$, $i>0$, are open.  
	\item For every $z\in A_{i}$, we have that $D^{i}_{\eta}(u,Q_{l}(z)) \leq \sigma$. 
	\item  There exists $\rho$ (independent of $L,\tau$) such that  if $z\in A_{0}$, then $\exists\,z'$ s.t. $Q_{\rho}(z')\subset A_{0}$ and $z \in Q_{\rho}(z')$. If $z\in A_{-1}$ then $\exists\,z'$ s.t. $Q_{1}(z')\subset A_{-1}$ and $z \in Q_{1}(z')$. 
	\item For every $z\in {A}_{i}$ and $z'\in {A}_{j}$ one has that there exists a point $\tilde{z}$ in the segment connecting $z$ to $z'$ lying in ${A}_{0}\cup A_{-1}$. 
\end{enumerate}

The proof of Theorem \ref{thm:main} can thus be reduced to the following

\begin{proposition}\label{prop:final}
Let $B = \bigcup_{i> 0}A_{i}$.	For every $i$ and for our choices of $\tau$ and $\eps$, it holds 
\begin{equation}
	\label{eq:toBeShown_integral}
	\begin{split}
		\frac{1}{L^d}\int_{B} \bar{F}_{i,\tau,\eps}(u,Q_{l}(z))\dz + \frac1{d L^d} \int_{A}\bar{F}_{\tau,\eps}(u,Q_{l}(z)) \dz  \geq \frac{C^{*}_{\tau,\eps}|A_{i}|}{L^d} - C(d,\upsilon,\eta_0) \frac{|A|}{l L^d}
	\end{split}
\end{equation}
for some constant $C(d,\Upsilon,\eta_0)$ {depending on the dimension $d$, on $\Upsilon$ and $\eta_0$}.
\end{proposition}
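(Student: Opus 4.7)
The plan is to slice the first integral in \eqref{eq:toBeShown_integral} in direction $e_i$ and apply Lemma~\ref{lemma:stimalinea}. By Fubini,
\begin{equation*}
\int_B \bar F_{i,\tau,\eps}(u,Q_l(z))\,dz
= \int_{[0,L)^{d-1}}dx_i^\perp \int_{\{s:\,x_i^\perp+se_i\in B\}} \bar F_{i,\tau,\eps}(u,Q_l(x_i^\perp+se_i))\,ds,
\end{equation*}
and by property (v) of the decomposition every maximal open interval of the inner integration domain lies in a single $A_j$, $j\in\{1,\dots,d\}$, with endpoints in $A$. On such an interval $J\subset A_j$ with $j\neq i$ I use Lemma~\ref{lemma:stimalinea}(i) to get $\int_J\bar F_{i,\tau,\eps}\geq-M_0/l$. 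On $J\subset A_i$ I invoke Lemma~\ref{lemma:stimalinea}(ii), distinguishing two cases for the endpoints: if both lie in $A_{-1}$, then two distinct $D^k_\eta$'s are $\leq\sigma$ there, so in particular one with $k\neq i$, and the strong form of (ii) yields $|J|C^*_{\tau,\eps}-M_0/l$; if at least one endpoint lies in $A_0$, then every $D^k_\eta\geq\sigma/2$ there, and only the weaker estimate $|J|C^*_{\tau,\eps}-M_0\,l$ is available. Slices entirely contained in a single $A_j$ are handled directly by \eqref{eq:gstr21} or \eqref{eq:gstr28}.

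Summing over intervals and integrating in $x_i^\perp$ gives
\begin{equation*}
\int_B\bar F_{i,\tau,\eps}\geq C^*_{\tau,\eps}|A_i| - \frac{M_0}{l}N_{\mathrm{good}} - M_0\,l\,N_{\mathrm{bad}},
\end{equation*}
where $N_{\mathrm{good}}$ and $N_{\mathrm{bad}}$ denote the $x_i^\perp$-integrated number of intervals of the two types. Property (iv) guarantees that any connected component of the intersection of $A$ with a slice has length at least $2\rho$, being covered by a fat cube of size $\rho$ inside $A_0$ or size $1$ inside $A_{-1}$. Hence on each slice the number of maximal intervals of $B$ equals the number of connected components of $A$, which is bounded by (length of $A$ on the slice)$/(2\rho)$; integrating in $x_i^\perp$ yields $N_{\mathrm{good}}+N_{\mathrm{bad}}\les|A|/\rho$, and so the ``good'' error $M_0|A|/(\rho l)$ is at most $C(d,\Upsilon,\eta_0)|A|/l$ since $\rho$ is a positive constant fixed in the parameter list.

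For the weaker ``bad'' error, each bad endpoint sits adjacent to a cube $Q_\rho(z')\subset A_0$, and a single such cube can generate at most $2(2\rho)^{d-1}$ bad endpoints along direction $i$ (at most two per perpendicular slice, over a $(d-1)$-dimensional slab of $x_i^\perp$-area $(2\rho)^{d-1}$). Since $\bar F_{\tau,\eps}(u,Q_l(z))>M$ for every $z\in A_0$, the cube contributes $M(2\rho)^d$ to $\int_{A_0}\bar F_{\tau,\eps}$; dividing by $d$ to match the prefactor in \eqref{eq:toBeShown_integral} and distributing over the bad endpoints gives $M\rho/d$ per endpoint, which exceeds $M_0\,l$ by the choice \eqref{eq:Mfix}. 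Thus $-M_0\,l\,N_{\mathrm{bad}}$ is absorbed by the $A_0$-part of $\frac{1}{d}\int_A\bar F_{\tau,\eps}$ with room to spare. The residual $A_{-1}$-part contributes $-(\hat C_d\Upsilon/(dl\eta_0))|A_{-1}|$ by \eqref{eq:tildeC}, which combines with the ``good'' error into the final $-C(d,\Upsilon,\eta_0)|A|/(lL^d)$ of the statement. The main obstacle is the careful bookkeeping of bad endpoints: one must verify that after dividing the $A_0$-mass by $d$ to treat one direction at a time, enough mass remains around each fat cube to cover all bad endpoints it creates along that direction, so that different directions do not overdraw the same positive reservoir.
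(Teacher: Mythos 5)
Your overall route matches the paper's: slice in direction $e_i$, apply Lemma~\ref{lemma:stimalinea} interval by interval, and absorb the weak $-M_0 l$ error from \eqref{eq:gstr36} into the mass $\bar F_{\tau,\eps}>M$ on $A_0$ via \eqref{eq:Mfix}. The difference is in the bookkeeping, and this is where the argument is not complete. The paper pairs each $I_j$ with \emph{half} of each adjacent gap, i.e.\ works with $\int_{I_j}+\frac{1}{2d}\int_{J_{j-1}\cup J_j}$, and stays entirely on one slice: if $J_j$ touches $A_0$, property (iv) projected onto the slice gives a sub-interval of length $\geq 2\rho$ inside $J_j\cap A_{0,t_i^\perp}$, hence $\int_{J_j}\bar F_{\tau,\eps}\geq M\rho$, and since the $J_j$'s are pairwise disjoint on the slice this mass is allocated without ambiguity. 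Your global scheme---assign each bad endpoint a fat cube $Q_\rho(z')\subset A_0$, bound the multiplicity per cube by $2(2\rho)^{d-1}$---never constructs that assignment, and one must contend with the facts that a single gap may contain many overlapping fat cubes and a single fat cube sits in many gaps across slices; you flag this yourself at the end, and it is a genuine unfinished step rather than a formality.

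There is a second, more quantitative problem in the ``good'' part. You bound $N_{\text{good}}+N_{\text{bad}}\lesssim|A|/\rho$ and conclude the good error is $\lesssim M_0|A|/(\rho l)$, then say this is $\leq C(d,\Upsilon,\eta_0)|A|/l$ ``since $\rho$ is a positive constant fixed in the parameter list.'' But $\rho\sim\sigma l$ with $\sigma\leq\min\{\sigma_0(1/l),\bar\sigma\}$, so a constant proportional to $M_0/\rho$ is not a function of $(d,\Upsilon,\eta_0)$ alone; and $l$ is itself chosen via \eqref{eq:lfix} in terms of $C(d,\Upsilon,\eta_0)$, so letting that constant absorb $1/\rho$ makes the parameter ordering circular. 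The paper never counts intervals: each $-M_0/l$ is charged either to $|J_j\cap A_{-1,t_i^\perp}|\geq 1$ (when the neighbouring gaps lie in $A_{-1}$, using property (iv) for $A_{-1}$) or to the $M\rho/(2d)$ reservoir (when they touch $A_0$), and then $\sum_j|J_j\cap A_{-1,t_i^\perp}|\leq|A_{t_i^\perp}|$ closes the estimate with the constant $\max\{M_0,\hat C_d\Upsilon/(d\eta_0)\}$, which is fixed before $l$ is chosen and carries no $1/\rho$.
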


Indeed, assuming  \eqref{eq:toBeShown_integral}, we can sum over $i$ and obtain that, since $l$ satisfies \eqref{eq:lfix} 
\begin{equation*}
\begin{split}
\Fcal_{\tau,L,\eps}(u) & \geq \sum_{i=1}^{d}\frac{1}{L^d} \int_{[0,L)^d} \bar{F}_{i,\tau,\eps}(u,Q_{l}(z))  \dz \geq  \frac{C^{*}_{\tau,\eps}}{L^d} \sum_{i=1}^{d} |A_i|  - \frac{dC(d,\Upsilon,\eta_0)|A|}{lL^d}
\\ & \geq C^{*}_{\tau,\eps} - C^*_{\tau,\eps} \frac{|A|} {L^d} - \frac{dC(d,\Upsilon,\eta_0)}{lL^d} |A| \geq C^{*}_{\tau,\eps},
\end{split}
\end{equation*}
where in the above $C^{*}_{\tau,\eps}$ is the energy density of optimal stripes of stripes $h^{*}_{\tau,\eps}$ and  we have used that $C^{*}_{\tau,\eps} < 0$ and that $|A | + \sum_{i=1}^{d} |A_{i}| =  |[0,L)^d | = L^d$. 

Notice that, in the inequality above, equality holds only if $|A|=0$ and therefore by $(v)$ only if there is just one $A_i$, $i>0$ with $|A_i|>0$. Therefore, it has been proved that there exists $i>0$ with $A_i = [0,L)^d$. 
Finally, let us consider 
\begin{align}
	\frac{1}{L^d}\int_{[0,L)^d}\bar F_{\tau,\eps}(u,Q_l(z))\dz&=\frac{1}{L^d}\int_{[0,L)^d}\bar F_{i,\tau,\eps}(u,Q_l(z))\dz\label{eq:fi}\\
	&+\frac{1}{L^d}\sum_{j\neq i}\int_{[0,L)^d}\bar F_{j,\tau,\eps}(u,Q_l(z))\dz\label{eq:fj}
\end{align}
{
We will now apply Lemma~\ref{lemma:stimalinea} with $j =i$ and slice the cube $[0,L)^d$ in direction $e_i$. 
From \eqref{eq:gstr21}, one has that \eqref{eq:fj} is nonnegative and strictly positive unless the function $u$  is one-dimensional and of the form $u(x)=u(x_j^\perp)$. 
On the other hand, from \eqref{eq:gstr28}, one has the minimum value of the \rhs of \eqref{eq:fi} is attained by one-dimensional  periodic functions  of the form $u=g(x_i)$, with period  $2h^*_{\tau,\eps}$ and satisfying \eqref{eq:refl} with $h=h^*_{\tau,\eps}$. Being $j\neq i$, the two conditions for minimizing \eqref{eq:fj} and \eqref{eq:fi} are compatible and thus minimizers of the functional $\Fcal_{\tau,L,\varepsilon}$ are as in the statement of Theorem \ref{thm:main}. }

The rest of this section is devoted to the proof of Proposition \ref{prop:final}.
\begin{proof}[Proof of Proposition \ref{prop:final}:] 
First of all notice that \eqref{eq:toBeShown_integral} follows from the analogous statement on the slices, namely that for every $t^{\perp}_{i}\in [0,L)^{d-1}$, it holds

\begin{equation}
\label{eq:toBeShown_slice}
\begin{split}
\frac{1}{L^d} \int_{B_{t^{\perp}_{i}}} \bar{F}_{i,\tau,\eps}(u,Q_{l}(t^{\perp}_{i}+se_i))\ds + \frac1{dL^d} \int_{A_{t^{\perp}_{i}}}\bar{F}_{\tau,\eps}(u,Q_{l}(t^{\perp}_{i}+se_i)) \ds  \geq \frac{C^{*}_{\tau,\eps}|A_{i,t^{\perp}_{i}}|}{L^d} - C(d,\Upsilon,\eta_0) \frac{|A_{t^\perp_i}|}{l L^d}
\end{split}
\end{equation}

Indeed by integrating \eqref{eq:toBeShown_slice} over $t^{\perp}_{i}$ we obtain \eqref{eq:toBeShown_integral}.

Notice also that $B_{t^{\perp}_{i}}$ is a finite union of intervals. Indeed, being a union of intervals follows from (ii) and finiteness follows from  condition  (v) on the decomposition.  Indeed, for every point that does not belong to $B_{t^\perp_{i}}$ because of (iv) there is a neighbourhood of fixed positive size that is not included in $B_{t^\perp_i}$. 
Let $\{ I_{1},\ldots,I_{n}\}$ such that $\bigcup_{j=1}^{n} I_{j} = B_{t_i^\perp}$ with $I_j \cap I_{k} = \emptyset$ whenever $j\neq k$. 
We can further assume that $I_{i} \leq I_{i+1}$, namely that for every $s\in I_{i}$ and $s'\in I_{i+1}$ it holds $s \leq s'$. 
By construction there exists $J_{j} \subset A_{t^{\perp}_{i}}$ such that $I_{j}\leq  J_{j} \leq I_{j+1}$.  

Whenever $J_j \cap A_{0,t_i^\perp}\neq\emptyset$,  we have that $|J_j | > \rho$  and whenever $J_{j} \cap A_{-1,t^\perp_i}\neq \emptyset $ then $|J_{i}| > 1$.

Thus we have that
\begin{equation*}
\begin{split}
\frac{1}{L^d} \int_{B_{t^{\perp}_{i}}} \bar{F}_{i,\tau,\eps}(u,Q_{l}(t^{\perp}_{i}+se_i)) \ds & + 
\frac{1}{d L^d} \int_{A_{t^{\perp}_{i}}} \bar{F}_{\tau,\eps}(u,Q_{l}(t^{\perp}_{i}+se_i)) \ds 
\\ & \geq \sum_{j=1}^n  \frac{1}{L^d}\int_{I_{j}} \bar{F}_{i,\tau,\eps}(u,Q_{l}(t^{\perp}_{i}+se_i)) \ds
+ \frac{1}{dL^d}\sum_{j=1}^n \int_{J_{j}} \bar{F}_{\tau,\eps}(u,Q_{l}(t^{\perp}_i+se_i)) \ds 
\\ & \geq \frac{1}{L^d}\sum_{j=1}^n \Big( \int_{I_{j}} \bar{F}_{i,\tau,\eps}(u,Q_{l}(t^{\perp}_{i}+se_i)) \ds
+ \frac{1}{2d} \int_{J_{j-1}\cup J_j} \bar{F}_{\tau,\eps}(u,Q_{l}(t^{\perp}_i+se_i)) \ds\Big),
\end{split}
\end{equation*}
where in order to obtain the third line from the second line, we have used periodicity and $J_0:=J_n$.

Let first $I_{j} \subset A_{i,t_i^\perp}$.  
By construction, we have that $\partial I_{j}\subset A_{t^\perp_i}$. 

If $\partial I_{j}\subset A_{-1,t^\perp_i}$, by using our choice of parameters we can apply \eqref{eq:gstr27} in Lemma~\ref{lemma:stimalinea} and obtain
\begin{equation*}
\begin{split}
\frac{1}{L^d}\int_{I_{j}} \bar{F}_{i,\tau,\eps}(u,Q_{l}(t^{\perp}_{i}+se_i))\ds  \geq \frac{1}{L^d}\Big(| I_j| C^{*}_{\tau,\eps} -\frac{M_0} l\Big).
\end{split}
\end{equation*}

If $\partial I_j \cap A_{0, t^\perp_i}\neq \emptyset$, by using our choice of parameters, we can apply \eqref{eq:gstr36} in Lemma~\ref{lemma:stimalinea}, and obtain
\begin{equation*}
\begin{split}
\frac{1}{L^d}\int_{I_{j}} \bar{F}_{i,\tau,\eps}(u,Q_{l}(t^{\perp}_{i}+se_i))\ds \geq\frac{1}{L^d}\Big(| I_j| C^{*}_{\tau,\eps}-M_0 l\Big).
\end{split}
\end{equation*}

On the other hand, if $\partial I_j \cap A_{0,t^\perp_i}\neq \emptyset$, we have that either $J_{j}\cap A_{0,t^\perp_i}\neq \emptyset$ or $J_{j-1}\cap A_{0,t^\perp_i}\neq\emptyset$. Thus
\begin{equation*}
\begin{split}
\frac{1}{2dL^d}\int_{J_{j-1}} \bar{F}_{\tau,\eps}(u,Q_{l}(t^{\perp}_{i}+se_i)) \ds & + \frac{1}{2dL^d}\int_{J_{j}} \bar{F}_{\tau,\eps}(u,Q_{l}(t^{\perp}_{i}+se_i)) \ds  \\ &\geq  \frac{M\rho}{2dL^d}  - \frac{|J_{j-1}\cap A_{-1,t^\perp_i} |\hat C_d\Upsilon}{2d\eta_0l L^d} - \frac{|J_{j}\cap A_{-1,t^\perp_i} |\hat C_d\Upsilon}{2d\eta_0l L^d},
\end{split}
\end{equation*}
where $\hat C_d$ is the  constant in \eqref{eq:tildeC}.

Since $M$ satisfies \eqref{eq:Mfix}, in both cases $\partial I_{j}\subset A_{-1,t^\perp_i}$ or $\partial I_{j}\cap A_{0,t^\perp_i}\neq \emptyset$, we have that 
\begin{equation*}
\begin{split}
\frac{1}{L^d}\int_{I_{j}} \bar{F}_{i,\tau,\eps}(u,Q_{l}(t^\perp_i+se_i)) \ds &+ 
\frac{1}{2dL^d}\int_{J_{j-1}}  \bar{F}_{\tau,\eps}(u,Q_{l}(t^{\perp}_{i}+se_i))\ds
+ \frac{1}{2dL^d}\int_{J_{j}}  \bar{F}_{\tau,\eps}(u,Q_{l}(t^{\perp}_{i}+se_i))\ds\\
&\geq \frac{C^{*}_{\tau,\eps} |I_{j} |}{L^d} - \frac{|J_{j-1}\cap A_{-1,t^\perp_i}|\hat C_d\Upsilon}{2d\eta_0lL^d}
- \frac{|J_{j}\cap A_{-1,t^\perp_i}|\hat C_d\Upsilon}{2d\eta_0lL^d}.
\end{split}
\end{equation*}

If $I_{j} \subset A_{k,t^\perp_i}$ with $k \neq i$ from \eqref{eq:gstr20} in  Lemma~\ref{lemma:stimalinea} it holds
\begin{equation*}
\begin{split}
\frac 1{L^d}\int_{I_{j}} \bar{F}_{i,\tau,\eps}(u,Q_{l}(t^{\perp}_{i}+se_i)) \ds\geq  - \frac{M_0}{lL^d}.
\end{split}
\end{equation*}

In general for every $J_{j}$  we have that 
\begin{equation*}
\begin{split}
\frac{1}{dL^d}\int_{J_{j}} \bar{F}_{\tau,\eps}(u,Q_{l}(t^{\perp}_{i}+se_i))\, \ds \geq   \frac{|J_{j}\cap A_{0,t^\perp_i} | M}{dL^d} - \frac{\hat C_d\Upsilon}{d\eta_0lL^d }|J_{j}\cap A_{-1,t^\perp_i}|. 
\end{split}
\end{equation*}

For $I_{j}\subset A_{k,t^\perp_i}$ such that $(J_j \cup J_{j-1})\cap A_{0,t^\perp_i}\neq \emptyset$ with $k\neq i$, we have that 
\begin{equation*}
\begin{split}
\frac{1}{L^d}\int_{I_{j}} \bar{F}_{i,\tau,\eps}(u,Q_{l}(t^\perp_i+se_i)) \ds &+ 
\frac{1}{2dL^d}\int_{J_{j-1}}  \bar{F}_{\tau,\eps}(u,Q_{l}(t^{\perp}_{i}+se_i))\ds
+ \frac{1}{2dL^d}\int_{J_{j}}  \bar{F}_{\tau,\eps}(u,Q_{l}(t^{\perp}_{i}+se_i))\ds\\
&\geq -\frac{M_{0}}{lL^d} + \frac{M\rho}{2dL^d} - \frac{|J_{j-1}\cap A_{-1,t^\perp_i}|\hat C_d\Upsilon}{2d\eta_0lL^d}
- \frac{|J_{j}\cap A_{-1,t^\perp_i}|\hat C_d\Upsilon}{2d\eta_0lL^d}.
\\ &\geq
- \frac{|J_{j-1}\cap A_{-1,t^\perp_i}|\hat C_d\Upsilon}{2d\eta_0lL^d}
- \frac{|J_{j}\cap A_{-1,t^\perp_i}|\hat C_d\Upsilon}{2d\eta_0lL^d}.
\end{split}
\end{equation*}
where the last inequality is true due to \eqref{eq:Mfix}.

For $I_{j}\subset A_{k,t^\perp_i}$ such that $(J_j \cup J_{j-1})\subset  A_{-1,t^\perp_i}$ with $k\neq i$, we have that 
\begin{equation*}
\begin{split}
\frac{1}{L^d}\int_{I_{j}} \bar{F}_{i,\tau,\eps}(u,Q_{l}(t^\perp_i+se_i)) \ds &+ 
\frac{1}{2dL^d}\int_{J_{j-1}}  \bar{F}_{\tau,\eps}(u,Q_{l}(t^{\perp}_{i}+se_i))\ds
+ \frac{1}{2dL^d}\int_{J_{j}}  \bar{F}_{\tau,\eps}(u,Q_{l}(t^{\perp}_{i}+se_i))\ds\\
&\geq -\frac{M_{0}}{lL^d}   - \frac{|J_{j-1}\cap A_{-1,t^\perp_i}|\hat C_d\Upsilon}{2d\eta_0lL^d}
- \frac{|J_{j}\cap A_{-1,t^\perp_i}|\hat C_d\Upsilon}{2d\eta_0lL^d}.
\\ &\geq
- \max\Big(M_{0},\frac{\hat C_d\Upsilon}{d\eta_0}\Big)\bigg(\frac{|J_{j-1}\cap A_{-1,t^\perp_i}|}{lL^d}
+ \frac{|J_{j}\cap A_{-1,t^\perp_i}|}{lL^d}\bigg).
\end{split}
\end{equation*}
where in the last inequality we have used that $|J_j\cap A_{-1,t^\perp_i}|\geq1, \,|J_{j-1}\cap A_{-1,t^\perp_i}|\geq1$.

Summing over $j$, and taking $C(d,\Upsilon,\eta_0)=\max\Big(M_{0},\frac{\hat C_d\Upsilon}{d\eta_0}\Big) $, one obtains \eqref{eq:toBeShown_slice} as desired.

\end{proof}

\end{document}